\documentclass[aos]{imsart}

\RequirePackage{graphicx}
\RequirePackage{pdfpages}
\RequirePackage{xcolor}		
\RequirePackage{array}
\RequirePackage{booktabs}		
\RequirePackage{color}
\RequirePackage[numbers,sort&compress]{natbib}
\RequirePackage[colorlinks,citecolor=blue,urlcolor=blue]{hyperref}
\RequirePackage{graphicx}

\usepackage{float}
\usepackage{comment}

\RequirePackage{hyperref}

\RequirePackage[OT1]{fontenc}

\RequirePackage[numbers]{natbib}

\RequirePackage{amsthm,amsmath,amssymb,amsfonts,dsfont}
\RequirePackage{bm,bbm,mathtools}
\RequirePackage{rotating}
\RequirePackage{lscape}
\RequirePackage{here}

\startlocaldefs
\numberwithin{equation}{section}
\theoremstyle{plain}
\newtheorem{Thrm}{Theorem}[section]
\newtheorem{Assu}{Assumption}
\newtheorem{DefThrm}[Thrm]{Definition and Theorem}
\newtheorem{Lem}[Thrm]{Lemma}
\newtheorem{Cor}[Thrm]{Corollary}
\newtheorem{Prop}[Thrm]{Proposition}
\newtheorem{Ex}{Example}

\newtheorem{Rem}{Remark}
\newcommand{\hide}[1]{} 

\newcommand{\N}{\mathbb{N}}
\newcommand{\Z}{\mathbb{Z}}
\newcommand{\R}{\mathbb{R}}
\newcommand{\C}{\mathbb{C}}
\newcommand{\FF}{\mathbb{F}}

\newcommand{\E}{\mathbb{E}}
\renewcommand{\P}{\mathbb{P}}

\newcommand{\A}{\mathcal{A}}
\newcommand{\B}{\mathcal{B}}
\newcommand{\F}{\mathcal{F}}
\newcommand{\G}{\mathcal{G}}
\newcommand{\PP}{\mathcal{P}}
\newcommand{\LL}{\mathcal{L}}
\newcommand{\NN}{\mathcal{N}}
\newcommand{\HH}{\mathcal{H}}
\newcommand{\KK}{\mathcal{K}}
\newcommand{\WW}{\mathcal{W}}
\newcommand{\ZZ}{\mathcal{Z}}
\newcommand{\CN}{\mathcal{CN}}

\newcommand{\cum}{\mathrm{cum}}
\newcommand{\Cov}{\mathrm{Cov}}

\newcommand{\Span}{\mathrm{span}}
\newcommand{\Id}{\mathrm{Id}}

\newcommand{\trace}{\mathrm{trace}}
\newcommand{\tsr}{\otimes}

\renewcommand{\phi}{\varphi}
\renewcommand{\F}{\mathcal F}
\renewcommand{\epsilon}{\varepsilon}

\newcommand{\<}{\langle}
\renewcommand{\>}{\rangle}

\newcommand{\wt}{\widetilde}
\newcommand{\wh}{\widehat}
\newcommand{\ol}{\overline}

\newcommand{\1}{\mathds{1}}
\renewcommand{\epsilon}{\varepsilon}

\endlocaldefs

\allowdisplaybreaks

\begin{document}


\begin{frontmatter}
\title{Frequency Domain Bootstrap for Functional Time Series}
%
\runtitle{Frequency Domain Bootstrap}
\begin{aug}

\author[A]{\fnms{Daniel}~\snm{Rademacher} 
\ead[label=e1]{d.rademacher@tugraz.at}}
\author[B]{\fnms{Jens-Peter}~\snm{Kreiss}\ead[label=e2]{j.kreiss@tu-bs.de}\orcid{0000-0001-6446-4558}}
\author[C]{\fnms{Efstathios}~\snm{Paparoditis}\ead[label=e3]{s.paparoditis@academyofcyprus.cy}\orcid{0000-0003-1958-781X}}
\address[A]{Institute for Statistics, TU Graz
\printead[presep={ ,\ }]{e1}}

\address[B]{Department of Mathematics, TU Braunschweig
\printead[presep={,\ }]{e2}}

\address[C]{Cyprus Academy of Sciences, Letters,  and Arts
\printead[presep={,\ }]{e3}}

%
%
%
%
%

\runauthor{Rademacher, Krei{\ss}, Paparoditis }
\end{aug}

\begin{abstract}
A frequency domain bootstrap procedure for   functional time series     is proposed  and   applied to  the class of  spectral mean operators. The procedure   works by  first generating independent 
pseudo periodogram 
operators  across the positive Fourier frequencies  using an estimator of the spectral density operator involved.  Functional   replicates of  the  
spectral mean operators of interest are then generated.
Through    an additive, projection-based decomposition of the  bootstrapped  spectral mean operator,  its  leading   $m$-dimensional  part  is   properly  complemented  to also capture  the  relevant  fourth order  characteristics  of the process.  The complementation is achieved  by means of  a resampling procedure based on convolved periodogram operators of subsamples.    The resulting  bootstrap  spectral mean operator  consistently estimates the entire second order  as well as the $m$-dimensional  fourth order  structure  of the distribution of  spectral mean operators.  By  allowing  
for the decomposition parameter 
$m$  to  increase  to infinity  as the sample size increases to infinity,
 consistency  in  estimating
   the entire  fourth order structure   of the process   also is  achieved.
The asymptotic theory   developed  investigates properties of the procedure  for fixed and for increasing $m$ and establishes  validity  of the frequency domain bootstrap proposal   under  rather weak  conditions on the  underlying functional process class.
\end{abstract}

\begin{keyword}[class=MSC]
\kwdgroup[type=primary]{\kwd{62F40}
\kwd{62M15}}
\kwdgroup[type=secondary]{\kwd{62M10}}
\end{keyword}

\begin{keyword} 
 \kwd{Spectral Density Operator}  
 \kwd{Spectral Mean Operator} 
 \kwd{Periodogram Operator}   
 \kwd{Frequency Domain Bootstrap} 
 \kwd{Convolved Periodogram Operators}   
\end{keyword}

%
\end{frontmatter}

\section{Introduction}

Functional time series  analysis has attracted  considerable  interest  during the last decades.  This branch of statistics  concerns  the development of statistical tools for 
analyzing  observations which can be considered as   realizations of  temporal dependent 
random elements   taking   values in   general,  separable Hilbert spaces. Several real-life data sets can be handled  using methods  developed 
in this area of mathematical  statistics,   like for instance,  atmospheric, environmental, finance, economic and  biological data.

Although a variety of asymptotic results for many  inference problems  has been successfully derived, the infinite dimensionality  of the random elements involved   
makes  the  implementation of asymptotically justifiable   distributional 
results  difficult in practice.  To address  these  difficulties,   different 
bootstrap or resampling methods  for functional time series have been  considered. 
This concerns  either the development of new bootstrap procedures  or the adaption  to the functional setting of methods proposed  for the multivariate  time series context.
Analogously  to the    finite dimensional case, the bootstrap can be implemented in the  time  or in the frequency domain. Concerning time domain methods, in \cite{Politis1994}  a  stationary bootstrap procedure  for the mean of a functional time series has been  proposed.   A moving block and a tapered block bootstrap procedure   to the $K$-sample mean problem  has been applied in \cite{PilavakisEtAl2019},  while  \cite{PilavakisEtAl2020} used  the moving block bootstrap to  the problem of testing  equality of autocovariance operators of two functional time series. 
In \cite{Shapirov2016} the authors addressed a  change point problem   for functional time series  using the  nonoverlapping block bootstrap,  the latter method has been also   considered  in \cite{Dehling2015}. Properties  of a residual-based  bootstrap for first order functional autoregressive processes have been investigated in \cite{ZhuPolitis2017} and  
\cite{FrankeNuar2016},  while \cite{Paparoditis2018} introduced  a functional sieve bootstrap procedure.

 In contrast to   the above time domain methods, frequency domain bootstrap procedures  for functional time series  have attracted less attention.  Among  the few exceptions,    is  the paper    
 \cite{LeuchtEtAl2023}, where  a functional version of  the multiplicative  periodogram bootstrap (see 
 \cite{HurvichZeger1987}, \cite{FrankeHardle1992} and   \cite{DahlhausJanas1996}), has been  applied in the context of testing the  equality of  spectral density  operators between  two functional time series.   However, and as in the finite dimensional case,   the  functional version of the  multiplicative periodogram bootstrap has a  rather  limited range of validity. In  fact this procedure successfully estimates the distribution of  a  statistic, provided this distribution only depends  on  second order characteristics of the underlying  functional  process.  Note that even in the finite dimensional case of  multivariate processes,  
  the class of  statistics for which this holds  true 
  is much more narrower compared to the corresponding class of statistics  for  univariate time series. Essentially, nonparametric estimators, like  kernel estimators of the 
  spectral density operator  and statistics based  thereof   are the main class of inference problems  to which   the functional version of the multiplicative  bootstrap  can successfully  be applied.  
 
The aforementioned    limitations are 
  due to the fact that the functional  multiplicative periodogram bootstrap  generates pseudo periodogram operators,  which are independent across  Fourier frequencies.  This independence makes the procedure unable to imitate the weak dependence (covariance structure) of periodogram operators across frequencies, where the latter   is affected by    the fourth order characteristics of the underlying process. 
   This  drawback in capturing the weak dependence of the periodogram operators manifests itself in a     failure of this bootstrap procedure  for    important classes of statistics. 
 For instance, the class of so-called  spectral mean operators,  which refers to  a  variety  of statistics that can be expressed as integrated periodogram operators,  is an important example of  statistics  the distribution of which  depends,  not only on  second,  but also on   fourth order characteristics of 
 the underlying process; see 
\cite{RademacherEtAl2024}.
Note that in the finite dimensional (univariate and multivariate) case,   frequency domain procedures have been proposed which   are able to also imitate the relevant  fourth order 
characteristics of the underlying process and therefore,  to extend   the validity of  frequency domain bootstrap methods   to larger  classes  of  statistics; see \cite{MeyerEtAl2020}, \cite{MeyerPaparoditis2023} and \cite{YuEtAl2023}. 
However,  and due to  the infinite dimensionality of the  underlying space,  the  bootstrap methods proposed in this context can not  directly be transferred to  the   functional time series set up. 

To elaborate,    in addition to the  different and much more involved  technical challenges that have to be addressed  in the functional setting, the aforementioned, finite dimensional    bootstrap procedures,  
essentially work by applying a proper variance  correction to the distribution  obtained from the multiplicative periodogram bootstrap  part.  This variance (matrix)  correction aims to   capture the terms  which   are due to fourth order properties of the underlying process.
 Central  in this procedure  is a  standardization of the distribution of the  bootstrap  statistic obtained in the multiplicative bootstrap step, prior  
 to the application of  the  variance correction via a rescaling of the standardized bootstrap statistic. This   standardization implicitly requires the existence of a bounded,  inverse spectral density  (matrix).
However,  due to the trace class property of the  considered spectral density operators in the  functional  setting,  these operators do not  possess   in general,   a bounded inverse operator.   Therefore, this approach is not feasible in the functional set up.
One way out will be  to    work  from the beginning  with a finite dimensional  projection, respectively, approximation of the operators involved.
Although this  will  allow for    the existence of    a bounded inverse operator  that  enables   a proper standardization, 
it   essentially  reduces   the  functional inference problem     to a  finite dimensional  multivariate one, thereby neglecting   the inherent infinite dimensional  structure of the  random elements  considered.

In  this paper a  frequency domain bootstrap procedure  for functional time series   is proposed  which avoids a finite dimensional reduction of the inference problem at hand and achieves consistency in estimating the quantities of interest under rather  weak conditions on the  underlying 
 processes class.  Instead of  projecting the statistic of interest   and working on a finite dimensional subspace,  our procedure retains      infinite dimensionality  by applying  a   projection-based  decomposition  of the statistic   considered
  in order  to consistently estimate  the covariance,  the relation operator and   the distribution of interest.  To elaborate,  
our starting point is the generation of  independent,  random pseudo elements of finite Fourier transforms which  are obtained 
by using the limiting complex Gaussian distribution of their sample counterparts. The unknown spectral density operator involved  is replaced by a   uniformly consistent estimator with respect to the trace norm.  
The generated   pseudo Fourier transforms lead to pseudo replicates of the periodogram operator and consequently  to  bootstrapped  
spectral mean operators, which  successfully  imitate the entire second order dependence structure of the underlying process. 

To complement the bootstrap spectral mean  operators for the missing fourth order  terms, the aforementioned  decomposition of the bootstrapped spectral mean operator is used. 
In particular, and
using the subspace spanned by the first $m$ orthonormal eigenfunctions 
corresponding to the $m$ largest eigenvalues of the lag-zero autocovariance operator,
we  decompose
the  bootstrapped spectral mean operator  in two additive  parts. The first  and leading  part  is obtained  by   projecting  the bootstrapped spectral mean operator  on the aforementioned subspace  while  the second  part  consists of   the  remaining random elements.
A second, complementary resampling procedure based on convolved periodogram operators of subsamples is then introduced, the aim of  which is to enrich   the leading  part  of 
this   decomposition for the  missing fourth order terms.
Specifically  and, 
based on  an analogue   decomposition  of the pseudo spectral mean operators obtained via the   convolved subsampling procedure, 
a    consistent estimation  of   the  missing  fourth order terms  of the covariance and  relation operator of the leading part  is achieved. 
Adding to  the  corrected main part of the bootstrapped  spectral mean operator  the  bootstrapped  second part,  which properly    imitates   the  corresponding  remaining   random elements,  leads  finally 
 to a    fully functional, frequency domain 
 bootstrap proposal which is used to  estimate  the distribution of spectral mean operators. 
To summarize, the described    bootstrap proposal has the following desirable   features:
\begin{enumerate}
\item[(i)]  As the original statistic, the generated bootstrap version of the spectral mean statistic  obtained in the final step of the procedure  is fully functional, that is,  a finite dimensional reduction  of the  inference problem considered is avoided.
\item[(ii)]  The projection-based decomposition 
of the spectral mean operator obtained from  the first part of the procedure,  is solely made in order  to correct the leading  $m$-dimensional part of the decomposed operator    for the missing fourth order terms. 
\item[(iii)]   Independently on  the choice of the dimension $m$ of this  decomposition,   the procedure  successfully mimics the entire, infinite dimensional  second order structure of the functional process that  affects the distribution of spectral mean operators. 
 \item[(iv)]  For any  $m$, the procedure also  correctly imitates the  fourth order terms  affecting  the distribution of spectral mean operators  which are  related  
 to   the leading, $m$-dimensional part of the decomposed
  spectral mean operator. 
 \item[(v)]  The approximation  errors  made  for any fixed $m$
  are   quantified. In particular,  it is shown how  these errors are affected by    
     the  closeness of the $m$-rank approximation  of   the   fourth order 
 spectral density operators  to its infinite dimensional population counterpart. 
 \item[(vi)]  By allowing   for the  dimension   $m$  to increase   to infinity as the  sample size $n$  increases to infinity,  
 the  approximation errors  vanish and  the procedure   successfully captures    also the  entire,  infinite dimensional  fourth order structure of the underlying functional process which 
 affects the distribution of spectral mean operators. 
\end{enumerate}

The   paper is organized as follows:  Section 2  describes the set up considered, introduces the notation used and summarizes results for the limiting behavior of spectral mean operators which are important  for  our subsequent  investigations. Section 3 is devoted to the frequency domain bootstrap procedure proposed. The  different components  of the procedure are described  in Section 3.1 while  the resulting  bootstrap algorithm is precisely stated in Section 3.2. Section 4  deals with   theoretical  investigations   under  both scenarios: the case of a  fixed $m$ and  the case where $m$   
increases  to infinity as the sample size $n$ increases to infinity.   It is shown  that  under rather  mild  conditions on the underlying process,  the procedure  consistently  estimates  the covariance and  the relation operator as well as the entire distribution of the class of spectral mean  operators.  Section 5 discusses  the issue  of how to choose the  bootstrap  tuning parameters   in practice 
   and  presents some numerical results. Technical proofs as well as  some  auxiliary lemmas  are deferred  to the Supplementary File.


\section{Statistical Inference for Functional Spectral Mean Operators} \label{sec2}
Let  $ \HH=\HH_{\mathbb R} + {\rm i} \HH_{\mathbb R}$ denote a  complexified separable  Hilbert space  and consider  a  stationary stochastic process ${\mathcal X}:= \{X_t,t\in\Z\}$, where  for each $t\in\Z$, the random elements $X_t$ take values in  the  real part  $\HH_{\mathbb R}$ of $\HH$ and satisfy $\E\|X_t\|^p < \infty$ for some integer $p\geq 4$. Here, $ \|\cdot\|$ denotes the norm induced by the inner product $ \langle \cdot, \cdot \rangle$  of $ {\mathcal H}$. The Lebesgue-Bochner space of all such random elements is denoted  by $L^p(\HH)$.   Since $\E\|X_t\| <\infty$, the expectation $ \E X_t =:\mu \in {\mathcal H}$ is well-defined (as a Bochner integral) and characterized
by the property $\<\E X_t,f\>=\E \<X_t,f\>$ for all $f\in \HH$. Moreover, $\E\|X\|^2 <\infty$ implies the existence of $ \Cov(X_t,X_s):= \E[(X_t-\mu)\otimes (X_s-\mu)]$, the cross-covariance operator 
of $ X_t$ and $ X_s$, where  the tensor operator $\otimes:\HH \times \HH\to N(\HH)$ is defined via $ f\otimes g(u):=\langle u,g\rangle f$ for $f, g, u\in{\mathcal H}$. ${\mathcal L}(\HH) $ denotes the space of bounded linear operators,  $ N({\mathcal H})$ 
the space of nuclear (or trace class) operators and $ HS({\mathcal H})$ the space  of Hilbert-Schmidt operators  from $\HH$ to $\HH$.  We denote by $ \|\cdot\|_{\mathcal L}$, $ \|\cdot \|_{HS}$ and $ \|\cdot \|_N$ the operator, the Hilbert-Schmidt and the nuclear norm, respectively,  avoiding in some  cases to explicitly  mention the particular spaces to which these norms  refer to.  For an operator $L$,   $ L^\ast$  denotes    the adjoint operator, that is the unique operator satisfying $\langle L(x),y\rangle =\langle x, L^\ast(y)\rangle$,   for all $x,y\in\HH$;  $\overline{L}$ the   conjugate operator defined by $ \overline{L}(x):= \overline{L(\overline{x})}$ for all $x\in\HH$  and $ L^\top$  the transposed operator $ L^\top := \big(\overline{L}\big)^\ast$; see \cite{RademacherEtAl2024} for  more details.  Furthermore,  if $L$ is a compact operator,  $ \sigma(L)$  stays  for its  discrete spectrum, while $ {\rm Re}(L)=(L+L^\ast)/2$ and ${\rm Im}(L)=(L-L^\ast)/(2i)$ denote the  real and imaginary part  of $L$, respectively.

We   introduce the  following   $2\times 2$ operator block  matrices,    which  are  useful in the sequel.  Let  $ C$, $ R \in HS(\HH)$  and denote by $ {\mathcal M}_{2\times 2}(HS(\HH))$ the space of all $2\times 2$ block matrices the entries of which are elements of $HS(\HH)$. More specifically,   consider the mapping 
$ \Sigma: HS(\HH)\oplus HS(\HH) \rightarrow {\mathcal M}_{2\times2}(HS(\HH))$ defined   by
\begin{equation} \label{eq.Sigma-Matrix}
 \Sigma(C,R) := \frac{1}{2}\left(\begin{array}{cc} {\rm Re}\{ C + R\}  & {\rm Im}\{-C+ R \} \\
	{\rm Im}\{C+R \} & {\rm Re}\{ C- R\} \end{array}\right).
\end{equation} 
We write $\Sigma_{i,j}$ for the $(i,j)$-th entry of $\Sigma$.  Note that  $ {\mathcal M}_{2\times2}(HS(\HH))$ generated in this way is a vector space over the field $\R$,  and   equipped with the  inner product $ \langle \Sigma^{(1)}, \Sigma^{(2)}\rangle _{HS}= \sum_{i,j=1}^2 \langle \Sigma^{(1)}_{i,j}, \Sigma^{(2)}_{i,j}\rangle_{HS}$,  forms  a Hilbert space.
Direct   calculations show  that  $ \|\Sigma(C,R)\|^2_{HS} = 2 \|C\|_{HS}^2 + 2\|R\|^2_{HS}$,  which implies that    for sequences $ (C_n,n\in\N)$ and $ (R_n,n\in\N)$ in $ HS(\HH)$,  
 $ \|\Sigma(C_n,R_n)-\Sigma(C,R)\|_{HS} \rightarrow 0$ if and only if $ \|C_n-C\|_{HS}\rightarrow 0$ and $ \|R_n-R\|_{HS} \rightarrow 0$, as $n\rightarrow \infty$. Furthermore,  if $C$ is the  covariance operator,  $ C=\E(Y\otimes Y)$  and $ R$ the relation operator,  $R=\E(Y\otimes \overline{Y})$, of a centered  random element $Y \in   \HH$,  then $ \Sigma(C,R)$ is  nonnegative definite since it is the covariance operator of the two dimensional random  element  $ ({\rm Re}(Y), {\rm Im}(Y))^\top$.

Recall that by stationarity $ \Cov(X_t,X_s)$ only depends on the difference $t-s$ and we, therefore, use the short hand notation $ \Gamma_h:= \Cov(X_h,X_0)$, $h\in \Z$,
for the autocovariance operators of ${\mathcal X}$.
Accordingly, for $ h_1,h_2,h_3\in \Z$, let $ \Gamma_{h_1,h_2,h_3} := \cum (X_{h_1}, X_{h_2}, X_{h_3},X_0) \in N(HS(\HH))$ denote the fourth order autocumulant operator of $\mathcal{X}$, where
\begin{align}\label{eq.ForthOrderCum}
	\begin{split}
		&\cum (X_{h_1}, X_{h_2}, X_{h_3},X_0)\\
		&:=\E[(X_{h_1}\otimes X_{h_2})\otimes(X_{h_3}\otimes X_0)]   
		- \E[X_{h_1}\otimes X_{h_2}]\otimes \E[X_{h_3}\otimes X_0]\\
		&-\E[X_{h_1}\otimes X_{h_3}]\otimes_{op} \E[X_{h_2}\otimes X_0]
		- \E[X_{h_1}\otimes X_{0}]\otimes^\top_{op} \E[X_{h_2}\otimes X_{h_3}].
	\end{split}
\end{align}
In the above expression 
and  for linear operators $L_j :  {\mathcal H}\rightarrow {\mathcal H}$,  $j=1,2,3$, the following definitions  of the operator tensor product and of the transposed operator  tensor product are used:     $ L_1\otimes_{op}L_2(L_3) :=L_1L_3 L_2^\ast $ and $ L_1\otimes^\top_{op}L_2(L_3) := L_1L_3^\top L_2^\top$.  We refer to the Supplement File of  \cite{RademacherEtAl2024} for properties of these operator tensor products. 

Assuming that   $\sum_{h\in\Z}\|\Gamma_h\|_N$ is finite,  ${\mathcal X}$ possesses a bounded and continuous (with respect to $\lambda$) spectral density operator ${\mathcal F}_{\cdot}\colon [\pi,\pi] \to N(\HH)$ defined by
\begin{equation} \label{eq.SpecOper}
	{\mathcal F}_\lambda := \frac{1}{2\pi} \sum_{h\in\Z}\Gamma_h e^{-i\lambda h};
\end{equation}
see \cite{PanaretosTavakoli2013} and \cite{CerovekiHoermann2015}.
Furthermore, for the fourth order cumulant operators we assume 
$$\sum_{h_1,h_2,h_3\in\Z} \| \Gamma_{h_1,h_2,h_3}\|_N < \infty. $$
This implies the  existence of a continuous  and bounded 
fourth order spectral density operator ${\mathcal F}_{\cdot,\cdot,\cdot}\colon [\pi,\pi]^3 \to N(HS)$ given by
\begin{equation} \label{eq.FourtOper}
	{\mathcal F}_{\lambda_1,\lambda_2,\lambda_3}:=\frac{1}{(2\pi)^3}\sum_{h_1,h_2,h_3\in \Z} \Gamma_{h_1,h_2,h_3}e^{-i(\lambda_1h_1+\lambda_2h_2 +\lambda_3h_3)},
\end{equation}
where  $\lambda_1,\lambda_2,\lambda_3 \in [-\pi,\pi]$; see  \cite{RademacherEtAl2024}.

Our interest  focuses on estimators of spectral mean operators, that is, on characteristics  of the process ${\mathcal X}$, which can be expressed as Bochner integrals of the form 
\begin{equation} \label{eq.SpecMeans}
	M(W,\F) := \int_{-\pi}^{\pi} W(\lambda) \F_\lambda d\lambda \in N(\HH),
\end{equation}
for some (sufficiently regular) integrable function $ W:[-\pi,\pi] \rightarrow \C$.  The following are some examples.\\

\noindent {\bf Example 1:}  \ For $ W(\lambda) = {\bf 1}_{(0, \lambda_0]}(\lambda) $, $\lambda_0\in(0,\pi]$,
\[    M({\bf 1}_{(0, \lambda_0]}, \F)=\int_0^{\lambda_0} \F_\lambda d\lambda,\]
is the spectral distribution operator   of ${\mathcal X}$.\\

\noindent {\bf Example 2:}  \ For $ W(\lambda)=e^{-i\lambda h}$, $h \in \Z$,  
\[ M(e^{-i\lambda h} ,\F)=  \int_{-\pi}^{\pi} e^{-i h \lambda} \F_{\lambda} d\lambda = \Gamma_h,\]
is the lag $h$ autocovariance operator of  ${\mathcal X}$.\\

\noindent {\bf Example 3:} \ For $ W(\lambda)=b^{-1} K((\lambda-\lambda_0)/b)$, $\lambda_0\in(0,\pi)$,  where $ K$ is a probability density on $\R$ and $  b>0$ a (fixed) bandwidth, 
\[ M( b^{-1} K((\cdot-\lambda_0)/b),\F)=  \frac{1}{b}\int_{-\pi}^{\pi} K((\lambda-\lambda_0)/b)\F_{\lambda} d\lambda, \]
is a smoothed version of the spectral density operator  $ \F_\lambda$ around frequency $\lambda=\lambda_0$.\\

Estimators of spectral means $ M(W,\F)$ can be obtained by replacing the unknown spectral density operator $\F_\lambda$ by the periodogram operator
\begin{equation} \label{eq.PerOper}
	P_{n,\lambda} := J_{n}(\lambda)\otimes J_n(\lambda), \ \ \lambda \in[-\pi,\pi],
\end{equation}
where $J_{n}(\lambda):=(2\pi n)^{-1/2}\sum_{t=1}^n X_t e^{-i  t \lambda}$ denotes the discrete Fourier transform of  
$X_1, X_2, $ $ \ldots, X_n$.    
An estimator of $ M(W,\F)$ is then given by $ M(W,P_{n})$, i.e.,  by replacing $ \F_\lambda$ in (\ref{eq.SpecMeans}) by $P_{n,\lambda}$. Since the periodogram operator is commonly calculated at the Fourier frequencies  $\lambda_{j,n}=2\pi j/n$ with 
$ j\in  {\mathcal F}(n)=\{ j\in\Z \,  |\, j=-\lfloor (n-1)/2\rfloor, \ldots, \lfloor n/2 \rfloor \}$, an easier to compute, discretized version of $ M(W,P_{n})$ is given by 
\begin{equation} \label{eq.Est-SpecMean}
	M_n(W,P_n) :=  \frac{2\pi}{n}\sum_{j \in {\mathcal G}(n)} W(\lambda_{j,n}) P_{n,\lambda_{j,n}}.
\end{equation}
Here $ {\mathcal G}(n) :=  \{j \in \Z\, | \, 1\leq |j| \leq N \}$, where  $ N:=\lfloor n/2 \rfloor $. 
Properties of the periodogram operator $ P_{n,\lambda}$  as well as of the estimator $ M_n(W,P_{n})$ have been investigated in \cite{RademacherEtAl2024}.
In particular, it has been shown in the afore cited paper, that under mild assumptions on the underlying functional process class, the following weak convergence result holds true 
in $HS(\HH)$  as $n\rightarrow \infty$: 
\begin{equation} \label{eq.SpecMeansNormal}
	L_n:=\sqrt{n}\big( M_n(W,P_n)-M_n(W,\F)\big) \stackrel{\mathcal D}{\longrightarrow}  {\mathcal Z}\sim {\mathcal C}{\mathcal N}(0, \Psi ,  \Upsilon),
\end{equation}
where $ M_n(W,\F):=2\pi n^{-1/2}\sum_{j \in {\mathcal G}(n)} W(\lambda_{j,n}) \F_{\lambda_{j,n}}$ and the covariance operator 
$\Psi ={\rm Cov}({\mathcal Z}):=\E({\mathcal Z}\otimes {\mathcal Z})$ as well as the relation operator $ \Upsilon={\rm Rel}({\mathcal Z}):=\E({\mathcal Z}\otimes \overline{{\mathcal Z}})$,  of the limiting Gaussian Hilbert-Schmidt operator $ {\mathcal Z}$, consist of  two terms, i.e.,
$\Psi=\Psi_1+\Psi_2$, respectively, 
$ \Upsilon= \Upsilon_1+\Upsilon_2$ .  These terms are given by  
\begin{align*}
	\Psi_1
	&= 2\pi \Big\{ \int_{\pi}^\pi W(\lambda)\overline{W}(\lambda) \F_\lambda \otimes_{op} \F_\lambda d\lambda +  \int_{\pi}^\pi W(\lambda)\overline{W}(-\lambda) \F_\lambda \otimes_{op}^\top \F_{-\lambda} d\lambda\Big\},\\
	\Psi_2 
	&= 2\pi \int_{\pi}^\pi \int_{-\pi}^\pi W(\lambda_1)\overline{W}(\lambda_2) \F_{\lambda_1,-\lambda_1,-\lambda_2}  d\lambda_1 d\lambda_2,
\end{align*}
and
\begin{align*}
	\Upsilon_1 
	&= 2\pi \Big\{ \int_{\pi}^\pi W(\lambda)W(-\lambda) \F_\lambda \otimes_{op} \F_\lambda d\lambda +  \int_{\pi}^\pi W(\lambda)W(\lambda) \F_\lambda \otimes_{op}^\top 
	\F_{-\lambda} d\lambda\Big\},\\ 
	\Upsilon_2 
	&=  2\pi \int_{\pi}^\pi \int_{-\pi}^\pi W(\lambda_1)W(-\lambda_2) \F_{\lambda_1,-\lambda_1,-\lambda_2}  d\lambda_1 d\lambda_2,
\end{align*}
where all integrals involved are  Bochner integrals .
Observe that $\Psi_1$ and $ \Upsilon_1$ only incorporate the second order dynamics of $\mathcal{X}$ in form of 
$\F_\lambda$, while $\Psi_2$ and $\Upsilon_2$ exclusively take into account the fourth order dynamics in form of $ \F_{\lambda_1,\lambda_2,\lambda_3}$. 
Note  that $ {\rm Cov}({\rm Re}({\mathcal Z}))= {\rm Re}\{\Psi + \Upsilon\}/2$, $ {\rm Cov}({\rm Re}({\mathcal Z}),{\rm Im}({\mathcal Z}))= {\rm Im}\{-\Psi + \Upsilon\}/2$, $ {\rm Cov}({\rm Im}({\mathcal Z}),{\rm Re}({\mathcal Z}))= {\rm Im}\{\Psi + \Upsilon\}/2$ and  $ {\rm Cov}({\rm Im}({\mathcal Z}))= {\rm Re}\{\Psi - \Upsilon\}/2$. By the continuity of the mapping $ L_n  \mapsto ({\rm Re}(L_n), {\rm Im}(L_n))^\top$ and the continuous mapping theorem,  
 (\ref{eq.SpecMeansNormal})  it then follows  that  in $ HS(\HH)\oplus HS(\HH)$,  
\begin{equation}  \label{eq.SpecMeansNormal-Alt}
	\big({\rm Re}(L_n),  \ {\rm Im}(L_n)\big)^\top  \stackrel{\mathcal D}{\longrightarrow}  {\mathcal N}\big(0, 
	\Sigma(\Psi,\Upsilon) \big).
\end{equation}
as $ n \rightarrow \infty$. Since the converse weak convergence   also is  true,  it follows that   (\ref{eq.SpecMeansNormal}) and (\ref{eq.SpecMeansNormal-Alt}) are equivalent.

An inspection of  the calculations leading to  the covariance  and relation operators  $\Psi$ and $ \Upsilon$, respectively, reveals that the terms $\Psi_1$ 
and $ \Upsilon_1$ are due to the covariance of the periodogram operator $P_{n,\lambda_{j,n}}$, while the terms 
$ \Psi_2$ and $ \Upsilon_2$ are due to  the cross-covariance of  the same  operator  at different Fourier frequencies $ | \lambda_{j,n}|\neq |\lambda_{k,n}|$.  
Although the latter  cross-covariance vanishes at the rate ${\mathcal O}_P(n^{-1})$, see 
\cite{RademacherEtAl2024}, Theorem 4.4,  summing up over $2N$ frequencies, see 
 (\ref{eq.Est-SpecMean}),
leads to a non vanishing  contribution  of these  cross-covariances to the covariance and  to the relation operator of $ {\mathcal Z}$ as  expressed by the terms $\Psi_2$ and $\Upsilon_2$.   

The dependence of the asymptotic law (\ref{eq.SpecMeansNormal}) on the unknown operators $ \F_\lambda$ and $ \F_{\lambda_1,\lambda_2,\lambda_3}$ poses a major difficulty 
for  the applicability of the  derived   convergence results. This  fact motivates the investigation for  alternatives, e.g., bootstrap methods, to estimate the Hilbert-Schmidt operators $\Psi $ and 
$\Upsilon$ as well as the  entire distribution  of  the random element $ L_n=\sqrt{n}\big( M_n(W,P_n)-M_n(W,\F)\big)$ in $HS(H)$.

 
\section{The Functional Frequency Domain Bootstrap} 

\subsection{Preliminaries} \label{sec.Boot-idea}
We assume without  loss of generality, that the normalized eigenvectors $\{ v_r; r\geq 1\}$ of the covariance operator $\Gamma_0={\rm Cov}(X_t)$ form an orthonormal basis
(ONB) for the whole space $\HH$, that is, 
the range of $\Gamma_0$ is  equal to $\HH$. The associated eigenvalues $ \sigma(\Gamma_0 )=\{\sigma_r: r\geq  1\}$ are assumed to be distinct and arranged
in a descending order, that is, $\sigma_1 > \sigma_2> \ldots \,$. For $m \in \N$  let $\PP_m$ denote the orthogonal 
projection operator onto $\HH_{m} := \mathrm{span}\{ v_1,\ldots,v_m \}$, that is, $ \PP_m=\sum_{r=1}^m v_r\otimes v_r$  and let $\PP_m^{\perp} = \Id - \PP_m$  be the orthogonal projection onto 
$\HH_m^{\perp} :=  \overline{\mathrm{span}}\{ v_r; r> m \}$.
It is well-known, see \cite{HsingEubank20152017},  that expanding $X_t$ with respect to this eigenbasis, 
\begin{equation} \label{eq.K-L}
	X_t =  \PP_m(X_t) +  \PP_m^ \perp(X_t) 
	=: X_{t,m} + U_{t,m},
\end{equation}
is optimal in the sense that $X_{t,m}$ minimizes the mean square error  $\E\|X_t-X_{t,m}\|^2$, over all $m$-dimensional approximations.  Moreover, the entire stochastic information of $X_t$ is captured by the so-called scores $\xi_{r,t} := \langle X_t, v_r \rangle$, $r=1,2, \ldots ,$  and we have the following well-known fundamental relations
\begin{equation*}
	\sum_r \E|\xi_{r,t}|^2 = \sum_r \sigma_r = \E\|X_t\|^2 = \|\Gamma_0\|_N.
\end{equation*}    

Let   ${\mathcal X}_m:= \{X_{t,m};t\in \Z\}$  and ${\mathcal U}_m:= \{U_{t,m};t\in\Z\} $. Observe that 
using $\PP_m$, we can decompose  
the spectral density operator as
\begin{align}\label{eq.SpectralDensityDecomp}
	\F_{\lambda}
	&= (\PP_m + \PP_m^{\perp})\,\F_{\lambda}\,(\PP_m + \PP_m^{\perp})\nonumber\\
		& =:  \F_{\lambda}^{({\mathcal X}_m)}  +  G^{(m)}_{\lambda},
\end{align}
where $   \F_{\lambda}^{({\mathcal X}_m)}  = \PP_m \F_\lambda \PP_m$ and 
$G^{(m)}_{\lambda}:=  \F_{\lambda}^{({\mathcal X}_m,{\mathcal U}_m)} + \F_{\lambda}^{({\mathcal U}_m,{\mathcal X}_m)} 
	+ \F_{\lambda}^{({\mathcal U}_m)}$ with the notation 
$  \F_{\lambda}^{({\mathcal X}_m,{\mathcal U}_m)}:=\PP_m\F_\lambda \PP^\perp_m$, $  \F_{\lambda}^{({\mathcal U}_m,{\mathcal X}_m)} := \PP_m^\perp\F_\lambda \PP_m$ and 
	$ \F_{\lambda}^{({\mathcal U}_m)} :=\PP_m^\perp \F_\lambda \PP_m^\perp$.
A decomposition similar to \eqref{eq.SpectralDensityDecomp} can also be obtained for the  periodogram operator 
$P_{n,\lambda} $. That is,  
\begin{align} \label{eq.PerDec}
	P_{n,\lambda} 
	& =: P_{n,\lambda}^{(\mathcal{X}_m)}  + R^{(m)}_{\lambda},
\end{align}
with $P_{n,\lambda}^{(\mathcal{X}_m)}   := \PP_m\,P_{n,\lambda}\,\PP_m$,
 $ R^{(m)}_{\lambda}:=P_{n,\lambda}^{(\mathcal{X}_m,\mathcal{U}_m)} + P_{n,\lambda}^{(\mathcal{U}_m,\mathcal{X}_m)}
	+ P_{n,\lambda}^{(\mathcal{U}_m)} $, using  the definitions 
	$ P_{n,\lambda}^{(\mathcal{X}_m,\mathcal{U}_m)}:= \PP_m\,P_{n,\lambda}\,\PP^\perp_m$, 
	$ P_{n,\lambda}^{(\mathcal{U}_m,\mathcal{X}_m)} :=  \PP_m^\perp\,P_{n,\lambda}\,\PP_m$ and 
	$  P_{n,\lambda}^{(\mathcal{U}_m)} := \PP_m^{\perp}P_{n,\lambda}\,\PP_m^{\perp}$.
	To simplify notation we also write $ \F_\lambda^{(m)}$ and $ P_{n,\lambda}^{(m)}$ for $  \F_{\lambda}^{({\mathcal X}_m)}   $ and $ P_{n,\lambda}^{(\mathcal{X}_m)}   $, respectively. 

Substituting \eqref{eq.SpectralDensityDecomp} and \eqref{eq.PerDec} into \eqref{eq.SpecMeansNormal}  leads  to the decomposition 
\begin{align}\label{eq.SP-dec}
	L_n & := \sqrt{n}( M_n(W,P_n)-M_n(W,\F))  \nonumber \\
	& =\frac{2\pi}{\sqrt{n}} \sum_{j \in {\mathcal G}(n)} W(\lambda_{j,n}) \big(P_{n,\lambda_{j,n}}^{(\mathcal{X}_m)} -  \F_{\lambda_{j,n}}^{({\mathcal X}_m)} \big) \nonumber\\
	& \ \ \ \ + \frac{2\pi}{\sqrt{n}} \sum_{j \in {\mathcal G}(n)} W(\lambda_{j,n}) \big(R^{(m)}_{\lambda_{j,n}} - G^{(m)}_{\lambda_{j,n}} \big)\nonumber\\
	& =:  T_{n,m} + Q_{n,m},
\end{align}
with an obvious notation for $ T_{n,m} $ and $ Q_{n,m} $. Observe that $T_{n,m} =\PP_m L_n\PP_m $ while  $  Q_{n,m}=L_n-\PP_m L_n\PP_m $.\\

\subsection{The core building blocks}
\label{sec.3.2}
The functional frequency domain bootstrap  we propose, aims to generate replicates of $L_n$ which properly imitate  all features of the underlying functional process that affect the distribution of the random element $L_n$. 
In the following we elaborate on  the  major parts of the procedure.

In  the   first part,   pseudo replicates of  $L_n$  are generated using the  basic property  that 
for any $\lambda \in [-\pi,\pi]$, 
\begin{align}\label{eq.WeakConvergenceDFT}
	J_n(\lambda) 
	 \quad\xrightarrow{\mathcal{D}} \quad \mathcal{Y}(\lambda) \sim \mathcal{C}\mathcal{N}(0, \F_{\lambda}),
\end{align}
as $n\to \infty$, where  ${\mathcal C}{\mathcal N}(0, \F_{\lambda})$  denotes a circular-symmetric,  complex Gaussian element  with covariance operator $ {\mathcal F}_\lambda$. Furthermore,  for different frequencies $ \lambda_1,\lambda_2 \in [0,\pi]$ with  $\lambda_1\not=\lambda_2$,  the weak limits $\mathcal{Y}(\lambda_1)$ and $\mathcal{Y}(\lambda_2)$ are independent;
see \cite{CerovekiHoermann2015}. In particular,  and using these 
properties,  
pseudo replicates $ J^\star_n(\lambda_{j,n})$  of $J_n(\lambda_{j,n})$  at the  positive Fourier frequencies $2\pi j/n$, for $ j=1,2, \ldots, N$, are first generated.
This  enables the generation of  pseudo  replicates  $ P^\star_{\lambda_{j,n}}$ of the periodogram operators at the set $ {\mathcal G}(n)$ of frequencies and 
consequently to  a bootstrapped version $L_n^\star$ of $ L_n$.  Define   the  estimated projection operator,  
\begin{align}\label{eq.EmpiricalProjectionOperator}
	\widehat{\PP}_m = \sum_{r=1}^m \,\widehat{v}_r\otimes \widehat{v}_r,
\end{align}
where   $\widehat{v}_r $  are  (empirical) orthonormalized eigenfunctions of $\widehat{\Gamma}_0 := n^{-1} \sum_{t=1}^n X_t \otimes X_t$, identifiable up to a sign and  which correspond to the $m$ largest eigenvalues of $ \widehat{\Gamma}_0$. Analogue to (\ref{eq.SP-dec}), we then decompose  the bootstrap random operator $L_n^\star$ as
\begin{equation} \label{LnBoot-dec}
 L_n^\star = T_{n,m}^\star +Q_{n,m}^\star.
\end{equation}
Here $ T_{n,m}^\star=\widehat{\PP}_m L_n^\star \widehat{\PP}_m $ and $ Q^\star_{n,m} = L_n^\star - \widehat{\PP}_m L_n^\star \widehat{\PP}_m $.
Observe  
that  although the $J_n^\star(\lambda_{j,n})$ are  generated
independently across  the positive Fourier frequencies,
 the components 
$J_n^{(\mathcal{X}_m)^\star}(\lambda_{j,n})=\widehat{\PP}_m(J^\star_n(\lambda_{j,n})) $ and $J_n^{(\mathcal{U}_m)^\star}(\lambda_{j,n}) =\widehat{\PP}^\perp_m(J^\star_n(\lambda_{j,n})) $
still retain  dependencies and we have, 
\begin{align}
	\Cov(J_n^{(\mathcal{X}_m)^\star}(\lambda_{j,n})) 
	&= \widehat{\PP}_m\, \widehat{\F}_{\lambda_{j,n}}\, \widehat{\PP}_m,\label{eq.CovarianceBootDTF_Xm}\\
	\Cov(J_n^{(\mathcal{U}_m)^\star}(\lambda_{j,n})) 
	&= \widehat{\PP}_m^\perp\, \widehat{\F}_{\lambda_{j,n}}\, \widehat{\PP}_m^\perp,\label{eq.CovarianceBootDTF_Um}\\
	\Cov(J_n^{(\mathcal{X}_m)^\star}(\lambda_{j,n}), J_n^{(\mathcal{U}_m)^\star}(\lambda_{j,n})) 
	&= \widehat{\PP}_m\, \widehat{\F}_{\lambda_{j,n}}\, \widehat{\PP}_m^\perp. \label{eq.CovarianceBootDTF_XmDTF_Um}
\end{align}

Now,    if $\widehat{\F}_{\lambda} $ is a (uniformly) consistent estimator for $\F_{\lambda}$ in a way to be specified later on,  then  it  holds true  that 
the bootstrap replicate $L^\star_n$ generated in the way described above,  imitates asymptotically correct the   Gaussian  random element  
$ {\mathcal C}{\mathcal N}(0,\Psi_1, \Upsilon_1)$; see  Lemma~\ref{le.App-CLT-1} of Section~\ref{sec.boocons}.
Observe  that   because of the independence of the  generated  periodogram operators $ P^\star_{\lambda_{j,n}}$ across the 
 positive Fourier frequencies, 
the terms $\Psi_2$ and $ \Upsilon_2$ in the law \eqref{eq.SpecMeansNormal} can not be captured by this part of the procedure.

This motivates  the  second part,  which builds upon   decomposition (\ref{LnBoot-dec}) and  
 corrects   the leading part $ T_{n,m}^\star$  of $L_n^\star$   for  the missing fourth order terms.
Note that the spectral density operator $ F_{\lambda_{j,n}}^{({\mathcal X}_m)}$ is by construction  
a  finite rank, non-negative and self-adjoint   operator which is bounded from below because its smallest eigenvalue is strictly positive in $[0,\pi]$. Therefore,  this operator  possesses  a bounded inverse operator. 
A second resampling procedure is then introduced, which is based on convolved periodograms of subsamples. 
For this,   let  $b<n$  be a subsample size and  consider the set of  periodogram operators  $ P_{b,\lambda_{j,b}}^{(t)}$   of 
 the subsample $ X_t, X_{t+1}, \ldots, X_{t+b-1}$ and calculated for the frequencies $ \lambda_{j,b}=2\pi j/b$, $ j \in {\mathcal G}(b)$. Note that  the upper index $t$  marks the starting point of the subsample
 and that we have in total a number   of $ N_b=n-b+1$ such periodogram operators of subsamples.  The important  point here is  that because each periodogram operator $ P^{(t)}_{b, \lambda_{j,b}}$  is  calculated over a   subsample,   the covariance  structure of this  operator   
 across the  Fourier  frequencies $\lambda_{j,b}$, $j=1,2 \ldots, B$, corresponding  to the subsample length $b$ is retained.
With then proceed with  correcting  the part  $ T_{n,m}^\star $ for the missing fourth order terms. For this a number   $k=\lfloor n/b \rfloor $ of  periodogram operators  $P^{(t)}_{b,\lambda_{j,b}}$, $ j\in{\mathcal G}(b)$,  is chosen  randomly  and with replacement from the set of all possible periodogram operators of subsamples.   Analogously  to (\ref{eq.Est-SpecMean}), the  subsample based  statistic 
 $ M_b(W,P^{(t)}_b)$  is  then calculated  
 and  averaged    over the $k$ selected periodograms to get a  convolved subsample version of $L_n$ which is denoted by $ L_n^+$.  
 Applying  the same  decomposition as  in (\ref{LnBoot-dec}),  we  obtain  the  leading part $ T_{n,m}^+ = \widehat{\PP}_m L_n^+\widehat{\PP}_m$ of $ L_n^+$. 
 The covariance and relation operator of $ T_{n,m}^+$ are  then used to obtain consistent estimators of 
 $\Psi_{2,m}$  and $ \Upsilon_{2,m}$, respectively, where  
  \begin{equation} \label{eq.Psi2m}
 \Psi_{2,m}  =  2\pi \int_{\pi}^\pi \int_{-\pi}^\pi W(\lambda_1)\overline{W}(\lambda_2) F^{({\mathcal X}_m)}_{\lambda_1,-\lambda_1,-\lambda_2}  d\lambda_1 d\lambda_2, 
 \end{equation}
and 
\begin{equation}
\label{eq.Upsilon2m}
\Upsilon_{2,m}  =  2\pi \int_{\pi}^\pi \int_{-\pi}^\pi W(\lambda_1)W(-\lambda_2) F^{({\mathcal X}_m)}_{\lambda_1,-\lambda_1,-\lambda_2}  d\lambda_1 d\lambda_2. 
\end{equation} 
Here,  $\F^{({\mathcal X}_m)}_{\lambda_1,\lambda_2,\lambda_3} =(\PP_m \otimes_{op} \PP_m) {\mathcal F}_{\lambda_1,\lambda_2,\lambda_3} (\PP_m\otimes_{op} \PP_m)$ denotes the fourth order cumulant density operator of $ \{X_{t,m}, t \in \Z\}$, which  can be expressed  as 
\[\F^{({\mathcal X}_m)}_{\lambda_1,\lambda_2,\lambda_3}:=\frac{1}{(2\pi)^3}\sum_{h_1,h_2,h_3\in \Z} \Gamma^{({\mathcal X}_m)}_{h_1,h_2,h_3}e^{-i(\lambda_1h_1+\lambda_2h_2 +\lambda_3h_3)}, \ \ \lambda_1,\lambda_2\lambda_3 \in [-\pi,\pi].\]
and
\begin{align}
\Gamma^{({\mathcal X}_m)}_{h_1,h_2,h_3}&=\E[(X_{h_1,m}\otimes X_{h_2,m})\otimes(X_{h_3,m}\otimes X_{0,m})]   \nonumber\\
& \ \ \ \ - \E[X_{h_1,m}\otimes X_{h_2,m}]\otimes \E[X_{h_3,m}\otimes X_{0,m}] \nonumber \\
& \ \ \ \ -\E[X_{h_1,m}\otimes X_{h_3,m}]\otimes_{op} \E[X_{h_2,m}\otimes X_{0,m}]  \nonumber \\
& \ \ \ \ - \E[X_{h_1,m}\otimes X_{0,m}]\otimes^\top_{op} \E[X_{h_2,m}\otimes X_{h_3,m}] . \nonumber
\end{align}

The third part of the procedure aims to merge in a proper way  the estimators from  the first and  from   the second part  in order 
to obtain a fully functional, bootstrap based estimator of the  distribution as well as of  the covariance and the  relation operator of the random operator $ L_n$. 
To elaborate, and having  consistent estimates of $\Psi_{2,m}  $ and $\Upsilon_{2,m} $ obtained in the second part of the procedure,  we  properly transform 
the bootstrap   element   $T_{n,m}^\star$  obtained in the first part,  so that its   covariance and relation operator is  enriched  with  these estimates. 
Note  that since the distribution of  the random quantities involved  are  complex valued, the aforementioned merging  requires  that the transformation  of $ T_{n,m}^\star$ applied,  
simultaneously corrects  both, the covariance and the relation operator of $ T_{n,m}^\star$.
This  simultaneous correction is  achieved   by  first defining  a two-dimensional   operator vector the  first and second  component of which consist of the    real and of 
 the imaginary part of  $ T^\star_{n,m}$, respectively.  The  covariance matrix operator 
of this  vector is  then  properly transformed  and the obtained   two dimensional vector is   transformed  back to a  complex-valued  bootstrap random element, which is denoted by $ T_{n,m}^o$.  Adding to this  pseudo element  the bootstrap replicate $ Q^\star_{n,m}= L_n^\star-T_{n,m}^\star$ obtained in the first part of the procedure,  leads  to the   final bootstrap proposal  
$L_n^o=T^o_{n,m} + Q^\star_{n,m}$, which is    used  to estimate  the  covariance, the relation operator and the distribution of $ L_n$.

\hide{
As a last point we stress the fact that the $m$-approximating  random element $ X_{t,m}$ as well as the remainder $ U_{t,m}$, see (\ref{eq.K-L}), are not observed. However, they can be estimated  based on the functional time series $X_1,X_2, \ldots, X_n$  at hand 
by   using $\widehat{\xi}_{j,t} = \langle X_t,\widehat{v}_j\rangle$ as an estimator of $\xi_{j,t}$ for $j=1,2, \ldots, m$. Here $ \widehat{v}_j$ is an estimator (up to a sign) of the eigenfunction $v_j$, $j=1,2, \ldots, m$,  of the estimated 
lag zero autocovariance operator     $ \widehat{\Gamma}_0=n^{-1}\sum_{t=1}^n X_t\otimes X_t$. The estimators of $ X_{t,m} $  and $ U_{t,m}$ are  then given by  $ \widehat{X}_{t,m} =\sum_{j=1}^m \widehat{\xi}_{j,t} \widehat{v}_j$  
and  $ \widehat{U}_{t,m}= X_t-\widehat{X}_{t,m}$, respectively,  for $t=1,2, \ldots, n$.
}


\subsection{The bootstrap algorithm} \label{sec.Boot-algo}

The following  algorithm precisely describes the different steps of the  bootstrap procedure outlined  in Section~\ref{sec.3.2}. The first
part of the procedure consists of Step 2 and 3, the second part of Step 4 and 5 while  Step 6 describes the correction of the main part of the statistic made for  capturing  the
missing fourth order terms. The final bootstrap estimator of $L_n$ is  given  in Step 7.

\vspace*{0.25cm}
\begin{enumerate}
\setlength{\itemsep}{3pt}
\item[]\hspace*{-0,75em}{\bf Step 1:}  \  Choose an estimator $ \widehat{{\F}}_{\lambda_{j,n}}$ for the spectral density operator $ \F_{\lambda_{j,n}}$,  $ j \in {\mathcal G}(n)$.
Also, choose a positive integer $m$, $m<n$, and calculate the estimated projection operator $ \widehat{\PP}_m$, see (\ref{eq.EmpiricalProjectionOperator}).
Here $ \widehat{v}_r$, $r=1,2 \ldots, m$,  denote a set of $m$  orthonormalized eigenfunctions corresponding to the $m$-largest eigenvalues $\widehat{\sigma}_1>\widehat{\sigma}_2 
\ldots  > \widehat{\sigma}_m$  of the lag zero sample autocovariance  operator $ \widehat{\Gamma}_0 $.
\item[]\hspace*{-0,75em}{\bf Step 2:} \ 
  For $ \lambda_{j,n}=2\pi j/n$ and $ j=1,2, \ldots, N$, generate   independent pseudo Fourier transforms 
  \begin{align*}
  J_n^\star(\lambda_{j,n}) \sim \mathcal{C}\mathcal{N}(0, \widehat{{\mathcal F}}_{\lambda_{j,n}})  
  \end{align*}
and obtain 
\begin{equation}\label{eq.BootPer}
P^\star_{n,\lambda_{j,n}} =  J_n^\star(\lambda_{j,n}) \otimes J_n^\star(\lambda_{j,n}) , \ \  {\rm for} \ \ j=1,2 \ldots, N,
\end{equation}
and $ P^\star_{n,\lambda_{j,n}}=\overline{P}^\star_{n,-\lambda_{j,n}}$ for $ j=-1,-2, \ldots, -N$. 
\item[]\hspace*{-0,75em}{\bf Step 3:} \ Calculate the bootstrap spectral mean operator
\begin{align*}
L^\star_n
& = \frac{2\pi}{\sqrt{n}} \sum_{j \in {\mathcal G}(n)} W(\lambda_{j,n}) \big(P_{n,\lambda_{j,n}}^\star - \widehat{\mathcal F}_{\lambda_{j,n}} \big) 
\end{align*}
and  the decomposition $ L_n^\star=T_{n,m}^\star + Q_{n,m}^\star$, where    
\begin{equation} \label{eq.Tnm-star}
T_{n,m}^\star
 =    \widehat{\PP}_m L_n^\star \widehat{\PP}_m 
 \end{equation}
 and
 \begin{equation}  \label{eq.Qnm-diamond}
Q_{n,m}^\star   =  L_n^\star -     \widehat{\PP}_m L_n^\star \widehat{\PP}_m.
\end{equation} 
\item[]\hspace*{-0,75em}{\bf Step 4:} \  Choose a positive integer  $ b =b_n <n$. Consider the set of $N_b$ subsamples $ \{ (X_{t} \ldots, X_{t+b-1,m})\,  |\,  t=1,2, \ldots, N_b\}$,  $ N_b=n-b+1$,  and  the corresponding set of periodogram operators,   
\begin{equation} \label{eq.Per-Set}
\{P^{(t)}_ {b, \lambda_{j,b}} = \frac{1}{2\pi b} \sum_{r,s=t}^{t+b-1} X_{r} \otimes X_{s} e^{-i(r-s)\lambda_{j,b}}\, | \, t=1, \ldots, N_b\}.
\end{equation}
For $j \in {\mathcal G}(b)$, let 
$$ \widetilde{\mathcal F}_{\lambda_{j,b}} = \frac{1}{N_b} \sum_{t=1}^{N_b} P^{(t)}_ {b,\lambda_{j,b}} $$
   be the averaged periodogram operators. 

Choose independently and with replacement  $k=\lfloor n/b \rfloor $ periodogram operators  from the set  (\ref{eq.Per-Set}). Denote these operators  by $ P^{ (I_\ell)}_ {b,\lambda_{j,b}} $, where 
$ I_\ell, \ \ell=1, \ldots, k$,  are i.i.d.  random variables having a  discrete uniform distribution  on the set $\{1,2, \ldots, N_b\}$. 
Calculate
\begin{align*}
L_n^+ & = \sqrt{kb}\Big( \frac{1}{k}\sum_{\ell=1}^k \frac{2\pi}{b}\sum_{j\in {\mathcal G}(b)} W(\lambda_{j,b})(P^{(I_\ell)}_ {b,\lambda_{j,b}}  - \widetilde{\mathcal F}_{\lambda_{j,b}} )\Big) \\
& =  \sqrt{kb}  \frac{2\pi}{b}\sum_{j\in {\mathcal G}(b)} W(\lambda_{j,b})(P^{+}_ {b,\lambda_{j,b}}  - \widetilde{\mathcal F}_{\lambda_{j,b}} ),
\end{align*}
where $P^{+}_ {b,\lambda_{j,b}}  : =k^{-1}\sum_{\ell=1}^k P^{(I_\ell)}_ {b,\lambda_{j,b}} $.
Calculate  the  random element  
\begin{equation} 
T_{n,m}^+ = \widehat{\PP}_m L_n^+ \widehat{\PP}_m.
\end{equation}
\item[]\hspace*{-0,75em}{\bf Step 5:} Let $ V^+_{n,m}= \big( {\rm Re}\{ T_{n,m}^+ \} , {\rm Im}\{ T_{n,m}^+ \}\big)^\top$ and 
 define   
\[ S_{2,m}^+ = \Cov^+(V_{n,m}^+) - \Sigma(\Psi^+_{1,m},\Upsilon^+_{1,m}),\]
where for $ \widehat{\widetilde{\mathcal F}}_{\lambda_{j,n}}^{(m)} =   \widehat{\PP}_m \widetilde{\mathcal F}_{\lambda_{j,n}}\widehat{\PP}_m$,
\begin{align*}
 \Psi_{1,m}^+ & =\frac{4\pi^2}{b} \sum_{j\in {\mathcal G}(b)} W(\lambda_{j,b})\overline{W(\lambda_{j,b})} \widehat{\widetilde{\mathcal F}}^{(m)}_{\lambda_{j,b}}\otimes_{op} \widehat{\widetilde{\mathcal F}}^{(m)}_{\lambda_{j,b}} \\
 & \ \ \ \  +\ \   \frac{4\pi^2}{b} \sum_{j\in {\mathcal G}(b)}W(\lambda_{j,b})\overline{W(-\lambda_{j,b})} \widehat{\widetilde{\mathcal F}}^{(m)}_{\lambda_{j,b}}\otimes^\top_{op} \widehat{\widetilde{\mathcal F}}^{(m)}_{-\lambda_{j,b}}
\end{align*}
and
\begin{align*}
\Upsilon_{1,m}^+ &  =\frac{4\pi^2}{b} \sum_{j\in {\mathcal G}(b)} W(\lambda_{j,b})W(-\lambda_{j,b}) \widehat{\widetilde{\mathcal F}}^{(m)}_{\lambda_{j,b}}\otimes_{op} \widehat{\widetilde{\mathcal F}}^{(m)}_{\lambda_{j,b}}  \\
& \ \ \ \ \ \  +  \frac{4\pi^2}{b} \sum_{j\in {\mathcal G}(b)}W(\lambda_{j,b})W(\lambda_{j,b}) \widehat{\widetilde{\mathcal F }}^{(m)}_{\lambda_{j,b}}\otimes^\top_{op} \widehat{\widetilde{\mathcal F}}^{(m)}_{-\lambda_{j,b}}.
\end{align*}
\item[]\hspace*{-0,75em}{\bf Step 6:} Let $ V^\star_{n,m}= \big( {\rm Re}\{ T_{n,m}^\star \} , {\rm Im}\{ T_{n,m}^\star \}\big)^\top$, where   $ T_{n,m}^\star$ is given in (\ref{eq.Tnm-star}).  
 Calculate  
\[ S_{1,m}^\star= \Cov^\star(V_{n,m}^\star) .\]
For
\[ S_m^o=S_{1,m}^\star + S_{2,m}^+,\]
calculate
\[ (Z_{0,m},Z_{1,m})^\top =(S_m^o)^{1/2} (S_{1,m}^\star)^{-1/2} V_{n,m}^{\star}\]
and 
\begin{equation} \label{eq.Tnm-circ}
  T^o_{n,m}= Z_{0,m}+ {\rm i} \, Z_{1,m}.
  \end{equation} 
\item[]\hspace*{-0,75em}{\bf Step 7:}  
The bootstrap approximation of  $ L_n$ is then   given by 
\[  L_n^o = T_{n,m}^\circ + Q_{n,m}^\star ,\]
with $T_{n,m}^\circ $  as  in (\ref{eq.Tnm-circ}) and $ Q_{n,m}^\star$  given in (\ref{eq.Qnm-diamond}). 
\end{enumerate}

\vspace*{0.2cm}

Some remarks concerning the above bootstrap algorithm are in order.

\begin{Rem} 
{~} 
\begin{enumerate}
\item[{\rm (i)}] \ The centering by $ \widetilde{\F}_{\lambda_{j,b}}$ in Step 4 avoids an unnecessary bias  and  is justified by the fact that 
$$ \E^\star( P^{(I_\ell)}_ {b,\lambda_{j,b}} ) = N_b^{-1} \sum_{t=1}^{N_b} P^{(t)}_ {b,\lambda_{j,b}}= \widetilde{\F}_{\lambda_{j,b}}.$$
\item[{\rm (ii)}] \  The operator  $S_{2,m}^+$ defined  in Step 5  aims to estimate  the parts of $ \Psi_m$ and $ \Upsilon_m$ which  only depend on  the fourth order structure of the  process  $\{X_{t,m},t\in\Z\}$, that is the terms   $ \Psi_{2,m}$ and $\Upsilon_{2,m}$.  At the same time, 
  in Step 6,   $S_{1,m}^\star$ is an estimator of the parts of $ \Psi_m$ and $ \Upsilon_m$ which depend on  the  second  order dynamics  of the  process,  that is  of   $ \Psi_{1,m}$ and $\Upsilon_{1,m}$ given by 
  \begin{align*}
	\Psi_{1,m}
	&= 2\pi \Big\{ \int_{\pi}^\pi W(\lambda)\overline{W}(\lambda) \F^{(m)}_\lambda \otimes_{op} \F^{(m)}_\lambda d\lambda +  \int_{\pi}^\pi W(\lambda)\overline{W}(-\lambda) \F^{(m)}_\lambda \otimes_{op}^\top \F^{(m)}_{-\lambda} d\lambda\Big\}
\end{align*}
and
\begin{align*}
	\Upsilon_{1,m} 
	&= 2\pi \Big\{ \int_{\pi}^\pi W(\lambda)W(-\lambda) \F^{(m)}_\lambda \otimes_{op} \F^{(m)}_\lambda d\lambda +  \int_{\pi}^\pi W(\lambda)W(\lambda) \F^{(m)}_\lambda \otimes_{op}^\top 
	\F^{(m)}_{-\lambda} d\lambda\Big\}.
\end{align*} 
Note that $ \Psi_{1,m} = (\PP_m\otimes_{op}\PP_m ) \Psi_{1} (\PP_m\otimes_{op}\PP_m )$ and   $ \Upsilon_{1,m} = (\PP_m\otimes_{op}\PP_m ) \Upsilon_{1} (\PP_m\otimes_{op}\PP_m )$.
\item[{\rm (iii)}]  The operator  $S_m^o$ defined  in Step 6 merges  together  the estimators of $ \Psi_{1,m}$ and $\Upsilon_{1,m}$  obtained in the first part  and  the 
estimators of  $ \Psi_{2,m}$ and $\Upsilon_{2,m}$  obtained in the second part of the  algorithm,  to form an estimator of $ \Psi_m=\Psi_{1,m}+\Psi_{2,m}$  and $ \Upsilon_m=\Upsilon_{1,m}+ \Upsilon_{2,m}$. This estimator 
is used to  properly correct the covariance and the relation operator of 
$ T^\star_{n,m}$ which is achieved  by properly correcting the covariance operator of the two dimensional operator $ V^\star_{n,m}= \big( {\rm Re}\{ T_{n,m}^\star \} , {\rm Im}\{ T_{n,m}^\star \}\big)^\top$.
 Finally,   $T_{n,m}^o$ denotes  the corrected complex-valued version of this operator.
\end{enumerate}
\end{Rem}

 \begin{Rem} 
The   operators $ \Psi_{1,m}^+$ and $ \Upsilon_{1,m}^+$  needed in Step 5 can also be estimated  using the bootstrap in the case  their explicit  calculation is   difficult. To elaborate, let $\widetilde{P}^+_{\lambda_{1,b}}, \widetilde{P}^+_{\lambda_{2,b}}, \ldots, \widetilde{P}^+_{\lambda_{B,b}},$ be a bootstrapped set of periodograms of subsamples, where   for each $ j\in\{1,2,\ldots, B\}$, $ \widetilde{P}^+_{\lambda_{j,b}}$ is chosen independently and with replacement from the set $\{ P_{\lambda_{j,b}}^{(1)}, 
P_{\lambda_{j,b}}^{(2)}, $ $ \ldots, $ $  P_{\lambda_{j,b}}^{(n-b+1)}\}$. Observe that in contrast to Step 4, such a resampling ensures that 
the generated  $\widetilde{P}^+_{\lambda_{j,b}} $ are independent  across the Fourier frequencies $ \lambda_{1,b}, \lambda_{2,b}, \ldots, \lambda_{B,b}$.
Note  that $ \E^\ast( \widetilde{P}^+_{\lambda_{j,b}}) =\widetilde{F}_{\lambda_{j,b}}$, for all $ \lambda_{j,b}$.  By repeating  this  resampling  $k$ times and denoting  by 
$ \widetilde{P}^{+(\ell)}_{\lambda_{1,b}}, \widetilde{P}^{+(\ell)}_{\lambda_{2,b}}, \ldots, \widetilde{P}^{+(\ell)}_{\lambda_{B,b}}$ the  set of periodogram ordinates generated in the $\ell$-th  repetition, $ \ell=1,2, \ldots, k$,  define  
\[ \widetilde{L}^+_n\ =  \sqrt{kb}  \frac{2\pi}{b}\sum_{j\in {\mathcal G}(b)} W(\lambda_{j,b})(\widetilde{P}^{+}_ {b,\lambda_{j,b}}  - \widetilde{\mathcal F}_{\lambda_{j,b}} ),\]
where $\widetilde{P}^{+}_ {b,\lambda_{j,b}}  :=k^{-1}\sum_{\ell=1}^k \widetilde{P}^{+(\ell)}_ {b,\lambda_{j,b}} $. Calculate  the projection $ \widetilde{T}_{n,m}^+ = \widehat{\PP}_m \widetilde{L}_n^+\widehat{\PP}_m$.  Then, $ \widetilde{\Psi}^+_{1,m}= {\rm Cov}^\star(\widetilde{T}_{n,m}^+)$ and 
$  \widetilde{\Upsilon}^+_{1,m}= {\rm Rel}^\star(\widetilde{T}^+_{n,m})$  are  estimators of     $ \Psi_{1,m}^+$ and $\Upsilon_{1,m}^+ $, respectively. 
\end{Rem}

\section{Validity of the functional frequency domain bootstrap}  \label{sec.boocons}
\subsection{Assumptions and basic lemmas}
To investigate the asymptotic properties of the frequency domain bootstrap procedure proposed, some assumptions have to be   imposed on  the moment and the dependence  structure  of the underlying functional process $ {\mathcal X}$ and  the behavior of the parameters  $b$ and $m$. 

\begin{Assu}  \label{as.1} 
$ {\mathcal X} $ is a centered strictly  stationary ${\mathcal H}_\R$-valued process with $ E\|X_0\|^8 <\infty$, which satisfies the  following  summability conditions.
\begin{itemize}
\item[{\rm (i)}] $\sum_{h\in\Z}\|\Gamma_h\|_N <\infty$
\item[{\rm (ii)}] $\sum_{h_1,h_2,h_3\in\Z} \big(|h_1|+|h_2|+|h_3|\big)\|\Gamma_{h_1,h_2,h_3}\|_N <\infty$
\item[{\rm (iii)}] $\sum_{h_1, \ldots, h_7\in\Z}\|{\rm cum}(X_{h_1}\otimes X_{h_2} - \Gamma_{h_1-h_2}, \ldots, X_{h_5}\otimes X_{h_6}-\Gamma_{h_5-h_6},  \\ X_{h_7}\otimes X_{0}-\Gamma_{h_7})\|_N<\infty$.
\end{itemize}
\end{Assu}

Assumption 1(i) ensures that the process $ {\mathcal X}$ possesses   a  continuous spectral density operator $\F_\lambda$.  The summability conditions  in  Assumption~\ref{as.1}(ii) and (iii)   are mainly needed in order to establish 
consistency of  the bootstrap estimators   based on the convolution of periodogram operators  of subsamples,  

 \begin{Assu}   \label{as.W}  The function 
 $W: [-\pi,\pi] \rightarrow \R$  is  of bounded variation.
\end{Assu}

 Assumption~\ref{as.W}  is    common  when dealing with   asymptotic investigations of spectral mean operators  and allows for  a wide range of statistics in functional time series analysis; see Section~\ref{sec2} for examples.

\begin{Assu}   \label{as.F} 
{~}
\begin{enumerate}
\item[(i)] The spectral density operator $ {\mathcal F}_\lambda $ satisfies $ kern({\mathcal F}_\lambda)=0$ for all $\lambda \in[0,\pi]$.
\item[(ii)]  The estimator  $ \widehat{{\mathcal F}}_{\lambda_{j,n}}$ satisfies for $ n \rightarrow \infty$, 
\[ \sup_{0\leq \lambda_{j,n} \leq \pi} \| \widehat{{\mathcal F}}_{\lambda_{j,n}} - {\mathcal F}_{\lambda_{j,n}} \|_N \stackrel{P}{\rightarrow} 0. \]
\end{enumerate}
\end{Assu}

Recall that  for all  $\lambda \in [0,\pi]$, the eigenvalues of $ \F_\lambda$  are  real, positive  and summable by the trace class property of ${\mathcal F}_\lambda$. 
The convergence with respect to the nuclear norm stated in   part (ii) of the above   assumption   is   due to the infinite dimensional structure 
of the space where the random elements considered live. 
For instance, Assumption~\ref{as.F} (ii) is  used  for  bounding  the $2p$-th  order moments of a Gaussian  random element (see Lemma~\ref{le.Mom})  or for  establishing  asymptotic distributional results (see Lemma~\ref{le.App-CLT-1} and the proof of the main Theorem~\ref{th.main}(iii)). 

Assumption~\ref{as.1}(i) and \ref{as.F} (ii)  are sufficient for establishing the following basic result  which refers to  the asymptotic properties  of the random element $ L_n^\star$
obtained  in Step 3 of the bootstrap algorithm.

 \begin{Lem} \label{le.App-CLT-1}
Suppose that  $ \{X_t,t\in\Z\}$ satisfies Assumption~\ref{as.1}(i), that Assumption~\ref{as.W} is fulfilled  and  that the spectral density estimator $\widehat{\F}_{\lambda_{j,n}}$  satisfies Assumption~\ref{as.F} (ii). 
Then, 
\begin{enumerate}
\item[{\rm (i)}]  $ \|{\rm Cov}^\star(L^\star_n)-  \Psi_1\|_{N}   \stackrel{P}{\rightarrow} 0$ and \ $ \|{\rm Rel}^\star(L_n^\star) - \Upsilon_1\|_{N} \stackrel{P}{\rightarrow} 0 $.\\
\item[{\rm (ii)}] In $ HS({\mathcal H})$ it holds true that,  $$L_n^\star  \stackrel{{\mathcal D}}{\rightarrow}  {\mathcal C}{\mathcal N}_{HS}(0,\Psi_1,\Upsilon_1),$$
\end{enumerate}
in probability. 
\end{Lem}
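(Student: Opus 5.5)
Conditionally on the data, $L_n^\star$ is a sum of independent, centered $HS(\HH)$-valued terms. Since $P^\star_{n,-\lambda_{j,n}}=\overline{P}^\star_{n,\lambda_{j,n}}$, we can write
\[
L_n^\star=\frac{2\pi}{\sqrt n}\sum_{j=1}^N \Big\{W(\lambda_{j,n})\big(P^\star_{n,\lambda_{j,n}}-\widehat\F_{\lambda_{j,n}}\big)+W(-\lambda_{j,n})\big(\overline{P}^\star_{n,\lambda_{j,n}}-\overline{\widehat\F}_{\lambda_{j,n}}\big)\Big\}=:\sum_{j=1}^N Y_{j,n}^\star .
\]
The $Y^\star_{j,n}$ are independent across $j$ because the $J_n^\star(\lambda_{j,n})$ are. (If $n$ is even, the frequency $j=N$ with $\lambda=\pi$ is real-valued and needs a separate but negligible treatment.) Both parts of the lemma will be deduced from this representation.

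For part (i), we compute ${\rm Cov}^\star$ and ${\rm Rel}^\star$ of each summand using the moment structure of circular complex Gaussian elements $J\sim\CN(0,\F)$. The relevant identities are
\[
{\rm Cov}(J\otimes J)=\F\otimes_{op}\F,\qquad {\rm Cov}(J\otimes J,\overline{J\otimes J})=0 \ \text{for nonreal } \F ,
\]
and the analogous transposed identity giving the $\otimes_{op}^\top$ terms. These follow from Isserlis' theorem, since the pseudo-covariance $\E(J\otimes\overline J)$ vanishes. Summing over $j$ gives
\[
{\rm Cov}^\star(L_n^\star)=\frac{4\pi^2}{n}\sum_{j\in\mathcal G(n)}\Big\{|W(\lambda_{j,n})|^2\,\widehat\F_{\lambda_{j,n}}\otimes_{op}\widehat\F_{\lambda_{j,n}}+W(\lambda_{j,n})\overline{W}(-\lambda_{j,n})\,\widehat\F_{\lambda_{j,n}}\otimes^\top_{op}\widehat\F_{-\lambda_{j,n}}\Big\},
\]
and similarly for ${\rm Rel}^\star$. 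Replacing $\widehat\F$ by $\F$ costs at most
\[
C\sup_j\|\widehat\F_{\lambda_{j,n}}-\F_{\lambda_{j,n}}\|_N\,\big(\sup_\lambda\|\F_\lambda\|_N+o_P(1)\big),
\]
because $\|A\otimes_{op}B\|_N\le\|A\|_N\|B\|_N$ and $W$ is bounded. This is $o_P(1)$ by Assumption~\ref{as.F}(ii). Finally, the Riemann sum with $\F$ converges in nuclear norm to the integral $\Psi_1$ (respectively $\Upsilon_1$). This uses the continuity of $\lambda\mapsto\F_\lambda$ in $\|\cdot\|_N$ from Assumption~\ref{as.1}(i), hence its uniform continuity, together with the bounded variation of $W$ (Assumption~\ref{as.W}), which makes $W$ Riemann integrable.

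For part (ii), we apply a Lindeberg–Feller CLT for triangular arrays of independent Hilbert-space-valued elements to the real vector $({\rm Re}\,L_n^\star,{\rm Im}\,L_n^\star)$ in $HS(\HH)\oplus HS(\HH)$. This form is equivalent to the complex statement by the discussion after (\ref{eq.SpecMeansNormal-Alt}). The CLT needs three conditions.
\begin{enumerate}
\item[(a)] The covariance operators must converge in trace norm to $\Sigma(\Psi_1,\Upsilon_1)$. This follows from part (i), because the entries of $\Sigma$ are linear in $(C,R)$.
\item[(b)] A Lindeberg condition must hold. We verify a Lyapunov version instead: $\sum_j\E^\star\|Y^\star_{j,n}\|_{HS}^4\le C n^{-2}\sum_j\E^\star\|J_n^\star(\lambda_{j,n})\|^8$. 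Gaussian moment bounds give $\E\|J\|^{8}\le C\|\F\|_N^4$ (Lemma~\ref{le.Mom}). Hence the sum is $O_P(n^{-1}\sup_\lambda\|\widehat\F_\lambda\|_N^4)=o_P(1)$.
\item[(c)] The traces $\E^\star\|L^\star_n\|^2_{HS}$ must converge to the trace of the limit. This is already implied by nuclear-norm convergence in (a).
\end{enumerate}
Since all three conditions hold in probability, we pass to almost surely convergent subsequences and obtain the conditional weak convergence in probability.

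The main obstacle is the infinite-dimensional CLT in part (ii). Finite-dimensional projections plus covariance convergence alone do not give tightness. What closes the gap is the nuclear-norm convergence of the covariances from (i). It yields uniform control of $\sum_{r>K}\E^\star|\langle L_n^\star,e_r\rangle_{HS}|^2$ over an ONB $(e_r)$ of $HS(\HH)$, and so ensures flat concentration. The remaining care is the bookkeeping of the conjugate pairs, where $W(\lambda)$ and $W(-\lambda)$ enter the same independent block; this is what generates the $\otimes^\top_{op}$ terms in $\Psi_1$ and $\Upsilon_1$.
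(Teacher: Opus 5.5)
Your part (i) is the paper's argument: the same per-frequency moment computation, the same formula for ${\rm Cov}^\star(L_n^\star)$, the same replacement bound via $\sup_j\|\widehat\F_{\lambda_{j,n}}-\F_{\lambda_{j,n}}\|_N$, and the same Riemann-sum step.

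One identity you list is wrong, however. The quantity ${\rm Cov}^\star(P^\star_{n,\lambda},\overline{P}^\star_{n,\lambda})=\E^\star(P^\star_{n,\lambda}\otimes\overline{P}^\star_{n,\lambda})-\widehat\F_\lambda\otimes\overline{\widehat\F}_\lambda$ is the relation operator of $P^\star_{n,\lambda}$. It equals $\widehat\F_\lambda\otimes^\top_{op}\overline{\widehat\F}_\lambda$, not $0$; already for scalars it is ${\rm Var}(|J|^2)=f^2$. This cross term, weighted by $W(\lambda_{j,n})\overline{W}(-\lambda_{j,n})$, is exactly what produces the $\otimes^\top_{op}$ sum in your formula. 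Isserlis' theorem only kills the fourth-order cumulant and the contribution built from $\E(J\otimes\overline{J})$. Since the covariance formula you actually display is correct, this is a misstatement rather than a gap.

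For part (ii) you take a genuinely different route.

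\emph{Your route.} You realify to $({\rm Re}\,L_n^\star,{\rm Im}\,L_n^\star)\in HS(\HH)\oplus HS(\HH)$. You then invoke a ready-made Lindeberg--Feller theorem for triangular arrays in a separable Hilbert space (covariance convergence, trace convergence, Lindeberg). Finally you pass through almost surely convergent subsequences to get the conditional statement in probability.

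\emph{The paper's route.} The paper proves the infinite-dimensional CLT by hand through Billingsley's Theorem 3.2, staying with complex Gaussian limits. It has three steps: convergence of the projections $\PP_k\otimes_{op}\PP_k(L_n^\star)$ via a complex Lyapunov CLT with $\delta=2$ (using Lemma~\ref{le.Mom}); convergence ${\mathcal G}_k\to{\mathcal G}$ via Lemma~\ref{le.App-2}; and the tail condition via Markov's inequality together with the nuclear-norm convergence from (i).

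\emph{Comparison.} The analytic inputs coincide: your Lyapunov bound is the paper's, and your trace condition (c) is precisely what the paper's step 3) exploits. Your version is shorter and gives joint convergence of the finite-dimensional projections automatically. The paper verifies the Lyapunov CLT coordinate by coordinate for $\langle L_n^\star,e_r\otimes e_s\rangle$ and then passes to the $k^2$-dimensional vector, which strictly needs a Cram\'er--Wold step. Your subsequence argument also makes explicit the ``in probability'' mode that the paper leaves implicit. The paper's version, in turn, is self-contained and avoids citing an external Hilbert-space CLT.
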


To deal with the estimators of the covariance and the relation operators $ \Psi_{2,m}^{+}={\rm Cov}^{\star}(T_{n,m}^+) - \Psi_{1,m}^+$ and $ \Upsilon_{2,m}^+={\rm Rel}^{\star}(T_{n,m}^+) - \Upsilon_{1,m}^+$, introduced in  Step 5, the following 
assumption regarding  the asymptotic  behavior of the  subsampling  parameter $b$ is imposed.

\begin{Assu} \label{as.3}
The subsampling width $ b$  and  the parameter $k=\lfloor n/b\rfloor$ satisfy the conditions, 
\begin{itemize}
\item[{\rm (i)}] \ $ b,k \rightarrow \infty$ \  and
\item[{\rm (ii)}] \  $b^3/n \rightarrow 0$ \ as \ $n\rightarrow\infty$.
\end{itemize}
\end{Assu}

The  next  result    shows  that  the random element   $L_n^+$ consistently estimates    the  entire covariance  and relation structure of  $L_n$.

\begin{Lem}\label{Lem:CBPConsistencyCovariance}
Suppose that Assumptions \ref{as.1}, \ref{as.W}  and \ref{as.3} are fulfilled.  Then, as $n\rightarrow \infty$, 
\begin{align*}
	\|\Cov^\star( L_n^+ ) -  \Psi\|_{HS}  \xrightarrow{\P} 0  \ \ \mbox{and} \ \  \|{\rm  Rel}^\star(L_n^+) -\Upsilon\|_{HS} \xrightarrow{\P} 0.
\end{align*}
\end{Lem}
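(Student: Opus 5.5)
Since the $I_\ell$ are i.i.d. uniform on $\{1,\ldots,N_b\}$, the bootstrap covariance reduces to a single-subsample empirical covariance. Writing
\[ Y_t := \frac{2\pi}{\sqrt b}\sum_{j\in{\mathcal G}(b)} W(\lambda_{j,b})\big(P^{(t)}_{b,\lambda_{j,b}}-\widetilde{\F}_{\lambda_{j,b}}\big), \qquad t=1,\ldots,N_b, \]
we have $L_n^+ = k^{-1/2}\sum_{\ell=1}^k Y_{I_\ell}$, and the $Y_{I_\ell}$ are i.i.d. conditionally on the data with conditional mean zero by Remark~1(i). Hence
\[ \Cov^\star(L_n^+) = \frac{1}{N_b}\sum_{t=1}^{N_b} Y_t\otimes Y_t, \qquad {\rm Rel}^\star(L_n^+) = \frac{1}{N_b}\sum_{t=1}^{N_b} Y_t\otimes \overline{Y_t}, \]
and $k$ disappears entirely. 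Set $\widetilde Y_t$ to be $Y_t$ with $\widetilde\F_{\lambda_{j,b}}$ replaced by $\E P^{(t)}_{b,\lambda_{j,b}} =: \F^{(b)}_{\lambda_{j,b}}$, the Fejér-smoothed spectral density, which does not depend on $t$. Then $Y_t=\widetilde Y_t - D_n$ with $D_n = \frac{2\pi}{\sqrt b}\sum_j W(\lambda_{j,b})(\widetilde\F_{\lambda_{j,b}}-\F^{(b)}_{\lambda_{j,b}})$, and $N_b^{-1}\sum_t Y_t\otimes Y_t = N_b^{-1}\sum_t \widetilde Y_t\otimes\widetilde Y_t - D_n\otimes D_n$ up to cross terms of the form $\bar{\widetilde Y}\otimes D_n$, where $\bar{\widetilde Y}=D_n$. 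The proof then splits into three steps:
\begin{enumerate}
\item[(a)] $\|D_n\|_{HS}\to 0$ in probability;
\item[(b)] $\E\big(N_b^{-1}\sum_t\widetilde Y_t\otimes\widetilde Y_t\big)=\Cov(\widetilde Y_1)\to\Psi$ in $HS$;
\item[(c)] $\E\big\|N_b^{-1}\sum_t(\widetilde Y_t\otimes\widetilde Y_t-\E\widetilde Y_t\otimes\widetilde Y_t)\big\|_{HS}^2\to0$.
\end{enumerate}
The same argument with $\overline{\widetilde Y_t}$ handles the relation operator.

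For (a), $D_n$ is essentially $\sqrt b$ times a spectral mean estimator built from $N_b$ overlapping blocks. Its second moment is of order $b\cdot O(b/n)$, because averaging overlapping periodograms gives variance $O(b/n)$ per frequency, summed with the weights $2\pi/b$ over $B$ frequencies. This uses Assumption~\ref{as.1}(i),(ii) through the $HS$-norm bounds on covariances of periodogram operators from \cite{RademacherEtAl2024}. So $\E\|D_n\|^2_{HS}=O(b^2/n)\to0$, which is guaranteed by $b^3/n\to0$.

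For (b), $\widetilde Y_1=\sqrt b\,(M_b(W,P_b)-M_b(W,\F^{(b)}))$ is exactly the statistic $L_b$ of the original problem, up to the centering $\F^{(b)}$ versus $\F$. I would reuse the covariance computation underlying \eqref{eq.SpecMeansNormal} from \cite{RademacherEtAl2024}: expand $\Cov(P_{b,\lambda_j},P_{b,\lambda_k})$ via \eqref{eq.ForthOrderCum} into the $\otimes_{op}$, $\otimes_{op}^\top$ and fourth cumulant parts. The diagonal second-order terms give Riemann sums converging to $\Psi_1$, and the off-diagonal cumulant terms, which are $O(b^{-1})$ each and summed over $B^2$ pairs, give $\Psi_2$. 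Assumption~\ref{as.W} controls the Riemann sums, and Assumption~\ref{as.1}(ii) gives the $O(b^{-1})$ error rate uniformly, with the errors measured in nuclear norm and hence in $HS$.

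Step (c) is the main obstacle. Expanding the squared $HS$ norm gives $N_b^{-2}\sum_{s,t}\langle \widetilde Y_s\otimes\widetilde Y_s-\E,\ \widetilde Y_t\otimes\widetilde Y_t-\E\rangle_{HS}$, and $\widetilde Y_t$ is a quadratic form $b^{-1/2}\sum_{r,u=t}^{t+b-1} w_b(r-u)\,(X_r\otimes X_u-\Gamma_{r-u})$ with $w_b(h)=b^{-1}\sum_j W(\lambda_{j,b})e^{-ih\lambda_{j,b}}$ bounded. Each summand is a fourth moment of such centered quadratic forms, which I would expand into cumulants of the centered products $X_r\otimes X_u-\Gamma_{r-u}$. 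Products of second-order cumulants decay in $|s-t|$ once the blocks are separated. The joint fourth cumulant of the products is the term controlled by Assumption~\ref{as.1}(iii), which needs $E\|X_0\|^8<\infty$. This gives a bound of order $N_b^{-2}\cdot N_b\, b\cdot C = O(b/n)$, plus a contribution $O(b^2/n)$ coming from $b$-overlapping blocks. All of these vanish under Assumption~\ref{as.3}. Combining (a)--(c) with the triangle inequality yields both claims.
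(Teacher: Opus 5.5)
Your proof is correct, and its skeleton matches the paper's. Step (a) is the centering error controlled in Lemma~\ref{Lem:PeriodogramMeanAndCovarianceApproxOverFF}(i). Step (b) is exactly the $R_1+R_2+R_3\to\Psi$ computation from the proof of Lemma~\ref{Lem:AsymptoticCovarianceRiemannApprox}. Step (c) plays the role of Lemma~\ref{Lem:PeriodogramMeanAndCovarianceApproxOverFF}(ii), with Assumption~\ref{as.1}(iii) entering in both through the joint cumulant of four centered products.

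The difference is the level at which norms are taken. The paper writes $\Cov^\star(L_n^+)=\frac{(2\pi)^2}{b}\sum_{j,i\in\mathcal G(b)}W(\lambda_{j,b})\overline{W(\lambda_{i,b})}H_{j,i,n}$. It then bounds $\|H_{j,i,n}-\Cov(P^{1,b}_{\lambda_{j,b}},P^{1,b}_{\lambda_{i,b}})\|_{HS}$ for each frequency pair separately and sums over the roughly $b^2$ pairs. You instead aggregate over frequencies first into the time-domain quadratic form $\widetilde Y_t$, which reduces the bootstrap covariance to an empirical second moment of a stationary sequence. You also handle the random centering through the exact identity $N_b^{-1}\sum_t Y_t\otimes Y_t=N_b^{-1}\sum_t\widetilde Y_t\otimes\widetilde Y_t-D_n\otimes D_n$. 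This is tidier than the paper's manipulation of $\widetilde{\F}_{\lambda_{j,b}}\otimes\widetilde{\F}_{\lambda_{i,b}}$, and it avoids the triangle inequality over frequency pairs, so in principle it allows sharper rates.

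The argument you actually sketch does not deliver those sharper rates, though. Suppose that in (c) you only use $|w_b(h)|\le\sup|W|$ and sum absolute values of cumulants. Then the count does not give $O(b/n)$. For instance, writing $Z_{r,u}=X_r\otimes X_u-\Gamma_{r-u}$, the pairing $\Cov(Z_{r_1,u_1},Z_{r_3,u_3})\Cov(Z_{r_2,u_2},Z_{r_4,u_4})$ contributes $N_b^{-2}\,b^{-2}\cdot N_b b^4\cdot b=O(b^3/n)$ to the second moment. The reason is that the crude bound effectively treats $\widetilde Y_t$ as being of size $\sqrt b$ rather than $O(1)$.

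This matches the paper's $O_P(\sqrt{b^3/N})$ bound for the norm and still vanishes by Assumption~\ref{as.3}(ii); that is precisely why $b^3/n\to0$ is imposed, so your conclusion stands. Obtaining $O(b/n)$ would require exploiting the decay of $w_b$ inherited from the bounded variation of $W$. Equivalently, you would need the near-uncorrelatedness of periodogram ordinates at distinct Fourier frequencies.
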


\vspace*{0.2cm}

We conclude this section with the following considerations which  justify  the corrections made in Step  6 of the bootstrap algorithm. 
Define for  any (fixed)  $m\in \N$ the covariance and relation operators   of $ \{X_{t,m} , t \in \Z\}$,  
\begin{equation} \label{eq.Cov-Rel-m}
 \Psi_m=\Psi_{1,m} + \Psi_{2,m} \ \ \mbox{and} \ \   \Upsilon_{m}=\Upsilon_{1,m} + \Upsilon_{2,m}.
 \end{equation} 
Notice  that $\| \Sigma(\Psi_m,\Upsilon_m) - {\rm Cov}(V_{n,m})\|_{HS} \rightarrow 0$, where 
$ V_{n,m} = ({\rm Re}(T_{n,m}), {\rm Im}(T_{n,m}))^\top$. 
This  follows from    (\ref{eq.SpecMeansNormal})  and the continuity of the projection operator $ \PP_m$,  which by 
 the continuous mapping theorem yields 
\[ T_{n,m} = \PP_m L_n \PP_m \stackrel{\mathcal D}{\longrightarrow} {\mathcal C}{\mathcal N}(0,\Psi_m,\Upsilon_m).\]
  Therefore, $\Sigma(\Psi_m,\Upsilon_m) $ is  a bounded, nonnegative-definite  operator which possesses a unique nonnegative square root operator 
$  \Sigma(\Psi_m,\Upsilon_m)^{1/2}$; see Theorem 3.4.3 in \cite{HsingEubank20152017}.  

Consider next  the  existence and boundedness of the  inverse  square root operator  $ \Sigma(\Psi_{1,m}, \Upsilon_{1,m})^{-1/2}$.
Note first that    for any $m\in \N$, $ {\mathcal F}_\lambda^{(m)}$   possesses  $m$ strictly positive eigenvalues in the interval $[0,\pi]$.  Observe that 
$\PP_m(X_t) =\sum_{j=1}^m \xi_{j,t} v_j$, where $ \xi_{j,t} =\langle X_t, v_j\rangle$ and that $ {\mathcal F}_\lambda^{(m)} = \sum_{l_1=1}^m\sum_{l_2=1}^m f_{l_1, l_2} (\lambda)  v_{l_1} \otimes  v_{l_2}$.
Here,  
\[f_{l_1, l_2} (\lambda)=\frac{1}{2\pi} \sum_{h\in\Z} {\rm Cov}(\xi_{l_1,t},\xi_{l_2,t+h}) e^{-i h \lambda}, \]
is the cross spectral density of the two score processes $\{\xi_{l_1,t}, t\in\Z\}$ and  $ \{\xi_{l_2,t},t\in\Z\}$.
Since ${\mathcal F}^{(m)}_\lambda$ is  a self-adjoint,  positive definite  and finite rank operator,  $ \Psi_{1,m}$ and $ \Upsilon_{1,m} $ are finite rank operators and  the same holds true for the 
 2$\times$2  operator matrix $ \Sigma(\Psi_{1,m}, \Upsilon_{1,m})$. Moreover, $  \Sigma(\Psi_{1,m}, \Upsilon_{1,m})$  is  also self-adjoint and nonnegative definite 
  because it is a covariance operator. To see this consider the   $m\times m$ spectral density matrix  $ F_m(\lambda)=\big( f_{l_1,l_2}(\lambda) \big)_{l_1,l_2=1,2, \ldots, m}$. Define a 
      Gaussian functional process  $ \breve{\mathcal X}=\{\breve{X}_t, t\in\Z\}$, where  $ \breve{X}_t =\sum_{j=1}^m \breve{\xi}_{j,t}v_j$ and   $ \{(\breve{\xi}_{1,t}, \breve{\xi}_{2,t}, \ldots, \breve{\xi}_{m,t})^\top, t \in \Z\}$ is an $m$-dimensional, Gaussian process with mean zero and spectral density matrix 
  $F_m(\cdot)$.  Let $ \breve{X}_1, \breve{X}_2, \ldots, \breve{X}_n$  be a functional time series stemming from $\breve{\mathcal X}$ and denote by $ \breve{P}_{\lambda_{j,n}}$ the periodogram operator of this time series.
     Define 
 \[ \breve{L}_{n} := \frac{2\pi}{\sqrt{n}}\sum_{j\in {\mathcal G}(n)} W(\lambda_{j,n}) \big( \breve{P}_{n,\lambda_{j,n}}-{\mathcal F}_{\lambda_{j,n}}^{(m)} \big).\]
Then, as in the proof  of  (\ref{eq.SpecMeansNormal}), it can be shown that,   as $n\rightarrow \infty$,  $ \breve{T}_{n,m} :=\PP_m \breve{L}_n \PP_m \stackrel{{\mathcal D}}{\longrightarrow} {\mathcal C}{\mathcal N}(0,\Psi_{1,m}, \Upsilon_{1,m})$,  
   $ \breve{V}_{n,m} := ( {\rm Re}( \breve{T}_{n,m}), \ {\rm Im}( \breve{T}_{n,m}))^\top $ $   \stackrel{{\mathcal D}}{\longrightarrow} {\mathcal N}(0, \Sigma(\Psi_{1,m}, \Upsilon_{1,m})) $ in $ HS(\HH)$ 
 and $  \lim_{n\rightarrow\infty}\|{\rm Cov}(  \breve{V}_{n,m}) - \Sigma (\Psi_{1,m}, \Upsilon_{1,m})\|_{HS} =0$.\\

%
%

\subsection{Approximation errors  for  fixed $m$}

To better understand  the behavior of the frequency domain bootstrap procedure proposed, 
we first investigate its properties  when the decomposition  parameter   $m$  remains   fixed as $n\rightarrow \infty$.  This enables the  identification of  the approximation errors made in   correcting  the  leading term 
of the decomposed  
bootstrap spectral mean operator for the  missing fourth order components.    

We start with the following lemma 
which  shows that 
the part of the bootstrap procedure  based on convolved periodogram operators of subsamples (see Step 4 and 5),  consistently estimates the terms  $ \Psi_{2,m}$ and $ \Upsilon_{2,m}$  attributed to the fourth order characteristics  of the 
underlying  process.

\begin{Lem}  \label{le.m-Fixed-1}
Under Assumption~\ref{as.1}, \ref{as.W} and \ref{as.3} and for any  fixed $ m \in \N$,  it holds true that, as $n\rightarrow \infty$, 
$$\big\| S_{2,m}^+ -  \Sigma(\Psi_{2,m}, \Upsilon_{2,m})
\big\|_{HS} \stackrel{P}{\rightarrow} 0,$$
where $ \Psi_{2,m}$ and $\Upsilon_{2,m}$ are defined in (\ref{eq.Psi2m}) and (\ref{eq.Upsilon2m}), respectively.
\end{Lem}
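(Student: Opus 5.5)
The plan is to write $S_{2,m}^+$ as a difference of two pieces and show each converges. Since $\Sigma(\cdot,\cdot)$ is linear in its arguments, and since $\Cov^+(V^+_{n,m}) = \Sigma\big(\Cov^\star(T^+_{n,m}),{\rm Rel}^\star(T^+_{n,m})\big)$ (the real/imaginary covariance identities recalled after (\ref{eq.SpecMeansNormal})), we have
\[ S^+_{2,m} - \Sigma(\Psi_{2,m},\Upsilon_{2,m}) = \Sigma\big(\Cov^\star(T^+_{n,m})-\Psi_m,\ {\rm Rel}^\star(T^+_{n,m})-\Upsilon_m\big) - \Sigma\big(\Psi^+_{1,m}-\Psi_{1,m},\ \Upsilon^+_{1,m}-\Upsilon_{1,m}\big). \]
Here we use $\Psi_m=\Psi_{1,m}+\Psi_{2,m}$ and $\Upsilon_m=\Upsilon_{1,m}+\Upsilon_{2,m}$. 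By the identity $\|\Sigma(C,R)\|_{HS}^2=2\|C\|^2_{HS}+2\|R\|^2_{HS}$, it suffices to prove HS-convergence in probability of the four differences $\Cov^\star(T^+_{n,m})-\Psi_m$, ${\rm Rel}^\star(T^+_{n,m})-\Upsilon_m$, $\Psi^+_{1,m}-\Psi_{1,m}$ and $\Upsilon^+_{1,m}-\Upsilon_{1,m}$ to zero.

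\emph{First two differences.} Since $T^+_{n,m}=\widehat{\PP}_m L^+_n\widehat{\PP}_m$ and $\widehat{\PP}_m$ is a function of the data only, hence fixed under $\P^\star$, we get
\[ \Cov^\star(T^+_{n,m})=(\widehat{\PP}_m\otimes_{op}\widehat{\PP}_m)\,\Cov^\star(L^+_n)\,(\widehat{\PP}_m\otimes_{op}\widehat{\PP}_m), \]
and analogously for the relation operator, with the appropriate conjugations ($\overline{\widehat{\PP}_m}=\widehat{\PP}_m$ because the $\widehat v_r$ are real). Similarly $\Psi_m=(\PP_m\otimes_{op}\PP_m)\Psi(\PP_m\otimes_{op}\PP_m)$, and likewise for $\Upsilon_m$. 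This follows from the definition of $\F^{(\mathcal X_m)}_{\lambda_1,\lambda_2,\lambda_3}$ and from Remark (ii). Adding and subtracting terms gives a bound by
\[ \|\Cov^\star(L^+_n)-\Psi\|_{HS}+2\|\widehat{\PP}_m-\PP_m\|_{\mathcal L}\,\big(\|\Cov^\star(L_n^+)\|_{HS}+\|\Psi\|_{HS}\big), \]
using $\|\widehat{\PP}_m\otimes_{op}\widehat{\PP}_m-\PP_m\otimes_{op}\PP_m\|_{\mathcal L}\le 2\|\widehat{\PP}_m-\PP_m\|_{\mathcal L}$. The first term vanishes by Lemma~\ref{Lem:CBPConsistencyCovariance}, which also makes $\|\Cov^\star(L^+_n)\|_{HS}=O_P(1)$. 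For fixed $m$ with distinct eigenvalues, $\|\widehat{\PP}_m-\PP_m\|_{\mathcal L}\le C_m\|\widehat\Gamma_0-\Gamma_0\|_{\mathcal L}$ by standard perturbation bounds (e.g.\ \cite{HsingEubank20152017}). Moreover $\|\widehat\Gamma_0-\Gamma_0\|_{HS}=O_P(n^{-1/2})$ holds under Assumption~\ref{as.1}(i),(ii) with fourth moments. Since each $\widehat v_r\otimes\widehat v_r$ is invariant under the sign ambiguity of $\widehat v_r$, that ambiguity causes no trouble.

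\emph{Last two differences.} $\Psi^+_{1,m}$ is a Riemann sum over the grid $\lambda_{j,b}$ of the continuous integrand of $\Psi_{1,m}$, but with $\F^{(m)}$ replaced by $\widehat{\PP}_m\widetilde{\F}_{\lambda_{j,b}}\widehat{\PP}_m$. Using $\|A\otimes_{op}B\|_{HS}\le\|A\|_{HS}\|B\|_{HS}$, boundedness of $W$, and the convergence of Riemann sums for bounded variation $W$ times a continuous $\F$ (Assumption~\ref{as.W} and \ref{as.1}(i)), it suffices to prove
\[ \max_{j\in\mathcal G(b)}\|\widetilde{\F}_{\lambda_{j,b}}-\F_{\lambda_{j,b}}\|_{HS}\stackrel{P}{\to}0, \]
together with the projection estimate above. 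For this I decompose into bias and variance. The bias is $\E\widetilde{\F}_{\lambda_{j,b}}-\F_{\lambda_{j,b}}$, the Fejér-kernel bias, which is $O(b^{-1}\sum_h|h|\|\Gamma_h\|_N)$, or $o(1)$ uniformly if only (i) holds, via Fejér's theorem in $N(\HH)$. For the variance part, $\E\|\widetilde{\F}_{\lambda}-\E\widetilde{\F}_\lambda\|^2_{HS}$ is an average over $N_b^2$ pairs of subsample periodograms. Its covariances are controlled by the second and fourth order summability (Assumption~\ref{as.1}) and are of order $b/n$ uniformly in $\lambda$. A union bound over the $O(b)$ frequencies then gives $O(b^2/n)\to0$ by Assumption~\ref{as.3}(ii).

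The main obstacle I expect is exactly this uniform variance bound for the averaged subsample periodogram. It requires an explicit expansion of $\Cov(P^{(t)}_{b,\lambda},P^{(s)}_{b,\lambda})$ in terms of $\Gamma_h$ and $\Gamma_{h_1,h_2,h_3}$, and a check that only $|t-s|\lesssim b$ contributes non-negligibly. If a direct computation becomes messy, I would instead reuse the intermediate estimates from the proof of Lemma~\ref{Lem:CBPConsistencyCovariance}, where $\widetilde{\F}$ also appears as the centering.
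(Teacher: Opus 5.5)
Your proposal is correct and follows essentially the same route as the paper. You split $S^+_{2,m}-\Sigma(\Psi_{2,m},\Upsilon_{2,m})$ into the $\Cov^\star(T^+_{n,m})-\Psi_m$ part, handled by Lemma~\ref{Lem:CBPConsistencyCovariance} plus the $\widehat{\PP}_m$ projection error, and the $\Psi^+_{1,m}-\Psi_{1,m}$ part, handled by a Riemann-sum argument together with bias and variance bounds for $\widetilde{\F}_{\lambda_{j,b}}$; the paper does the same. The only differences are minor: you control $\max_j\|\widetilde{\F}_{\lambda_{j,b}}-\F_{\lambda_{j,b}}\|_{HS}$ by a union bound (using $b^2/n\to0$), where the paper controls the frequency sum via Lemma~\ref{Lem:PeriodogramMeanAndCovarianceApproxOverFF}(i), and your $o(1)$ bias bound needs only Assumption~\ref{as.1}(i), whereas the paper's $O(1/b)$ bias claim implicitly uses $\sum_h|h|\|\Gamma_h\|_N<\infty$.
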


Towards  quantifying  the error made  for any fixed $m$  in approximating the distribution  of $ L_n$  by  the bootstrap  analogue   $ L^o_n$,    some additional notation is needed. 
Recall the definitions of $ T_{n,m}^\star$ and $ Q_{n,m}^\star$ in Step 3 of the bootstrap algorithm and let 
\begin{align*}
\Psi_{1,m}^{(\F, G)}   & := \lim_{n\rightarrow \infty} {\rm Cov}^\star(T^\star_{n,m},Q^\star_{n,m}) \\
& = 2\pi \Big\{\int_{-\pi}^\pi W(\lambda)\overline{W}(\lambda) \F^{(m)}_\lambda \otimes_{op} G^{(m)}_\lambda d\lambda \\
& \ \ \ \ \  +\int_{-\pi}^\pi W(\lambda)\overline{W}(-\lambda) \F^{(m)}_\lambda \otimes^\top_{op} G^{(m)}_{-\lambda} d\lambda \Big\}
\end{align*}
and  
\begin{align*}
\Upsilon_{1,m}^{(\F, G)}  &  := \lim_{n\rightarrow\infty}{\rm Rel}^\star(T^\star_{n,m},Q^\star_{n,m}) \\
& = 2\pi \Big\{\int_{-\pi}^\pi W(\lambda)\overline{W}(-\lambda) \F^{(m)}_\lambda \otimes_{op} G^{(m)}_\lambda d\lambda \\
& \ \ \ \ \ \ +\int_{-\pi}^\pi W(\lambda)W(\lambda) \F^{(m)}_\lambda \otimes^\top_{op} G^{(m)}_{-\lambda} d\lambda \Big\}.
\end{align*}
Both convergences  above are with respect to the  Hilbert-Schmidt norm. 
From  Lemma~\ref{le.App-CLT-1} we easily  get  that  
$  \|{\rm Cov}(T^\star_{n,m},Q^\star_{n,m})   -  \Psi_{1,m}^{(\F,G)} \|_{HS} \rightarrow 0$ and    that $\|{\rm Rel}^\star (T^\star_{n,m},Q^\star_{n,m})  - \Upsilon_{1,m}^{(\F,G)}\|_{HS} \rightarrow 0$, in probability, as $n\rightarrow \infty$. 
Also define  the   "adjoined" limits, with respect to the  Hilbert-Schmidt norm,
  \[ \Psi_{1,m}^{(G, \F)} := \lim_{n\rightarrow\infty}  {\rm Cov}^\star(Q^\star_{n,m},T^\star_{n,m}),  \]
  and  
\[  \Upsilon_{1,m}^{(G, \F)} := \lim_{n\rightarrow\infty}{\rm Rel}^\star(Q^\star_{n,m},T^\star_{n,m}) .\]
Note that the above  limits  obey   the same  expressions as     $ \Psi_{1,m}^{(\F, G)} $ and $ \Upsilon_{1,m}^{(\F, G)}  $,  respectively,  with the only difference that the positions of 
$ \F^{(m)}_\lambda$ and $ G^{(m)}_\lambda$ in the corresponding operator  tensor products  and transposed  operator tensor products are interchanged. 

 To obtain the analogue quantities  for the bootstrap procedure, recall the definition of $ V_{n,m}^\star$ in Step 6 of the bootstrap algorithm and define the two dimensional vector $  W_{n,m}^\star = ({\rm  Re}\{Q_{n,m}^\star\}, {\rm Im}\{Q_{n,m}^\star\})^\top$. Consider then for $m$ fixed,  the  limits,  
 \begin{align*}
 C^{(m)} & =\left(  C^{(m)}_{i,j}\right)_{i,j=1,2}\\
& :=\lim_{n\rightarrow \infty} {\rm Cov}^\star\big( (S_m^o)^{1/2} (S_{1,m}^\star)^{-1/2} V_{n,m}^{\star},  W_{n,m}^{\star} \big)\\
& = \Sigma(\Psi_m,\Upsilon_m)^{1/2} \Sigma(\Psi_{1,m}, \Upsilon_{1,m})^{-1/2} \Sigma(\Psi_{1,m}^{(\F,G)}, \Upsilon_{1,m}^{(\F,G)})
 \end{align*}
 and
   \begin{align*}
 D^{(m)} & =\left(  D^{(m)}_{i,j}\right)_{i,j=1,2}\\
& :=\lim_{n\rightarrow \infty} {\rm Cov}^\star\big(  W_{n,m}^{\star} ,(S_m^o)^{1/2} (S_{1,m}^\star)^{-1/2} V_{n,m}^{\star}\big)\\
& =   \Sigma(\Psi_{1,m}^{(G,\F)}, \Upsilon_{1,m}^{(G,\F)})\Sigma(\Psi_{1,m}, \Upsilon_{1,m})^{-1/2}\Sigma(\Psi_m,\Upsilon_m)^{1/2}.
 \end{align*}
  Both limits above  are with respect to the Hilbert-Schmidt norm. As it can be seen from the proof of Lemma~\ref{le.m-Fixed-2} below, both limits exist.
 Further define  the following quantities:
 \[  \widetilde{\Psi}_m^{(\F, G)} :=  \big(C^{(m)}_{1,1}+C^{(m)}_{2,2}\big) +
  i \big(C^{(m)}_{2,1}-C^{(m)}_{1,2}\big), \]
  \[ \widetilde{\Psi}_m^{(G, \F)} :=\big( D^{(m)}_{1,1}+D^{(m)}_{2,2} \big) +
  i  \big(D^{(m)}_{2,1}-D_{1,2}\big), \]
   \[ \widetilde{\Upsilon}_M^{(\F,G)} :=\big(C^{(m)}_{1,1}-C^{(m)}_{2,2}\big)  +
  i \big(C^{(m)}_{2,1}+C^{(m)}_{1,2}\big)\]
and
  \[ \widetilde{\Upsilon}_m^{(G,\F)} :=
   \big(\widetilde{D}^{(m)}_{1,1}-\widetilde{D}^{(m)}_{2,2}\big) +  i  \big(\widetilde{D}^{(m)}_{2,1}+\widetilde{D}^{(m)}_{1,2}\big).\]

The following result clarifies what the frequency domain bootstrap procedure does   with respect to 
the  estimation of   the covariance and  the relation operator     as well 
as of the   distribution of $ L_n$, when $m$ remains  fixed as $n \rightarrow \infty$.  

\begin{Lem} \label{le.m-Fixed-2}
Under Assumptions~\ref{as.1}, \ref{as.W}, \ref{as.F} and \ref{as.3} and for $ m \in \N$ fixed, the following assertions hold true as $ n\rightarrow\infty$,
\begin{itemize}
\item[{\rm (i)}]  $ \|{\rm Cov}^\star(L^o_n)-(\Psi_1 + \Psi_{2,m} + \Delta_{\Psi,m})\|_{HS}   \stackrel{P}{\rightarrow}  0$ \\ and  $ \|{\rm Rel}^\star(L_{n}^o) -    (\Upsilon_1+\Upsilon_{2,m} +\Delta_{\Upsilon,m} )\|_{HS} \stackrel{P}{\rightarrow} 0$.\\
\item[{\rm (ii)}]  In $ HS({\mathcal H})$ it holds true that, $$ L_n^o  \stackrel{{\mathcal D}}{\rightarrow}  {\mathcal C}{\mathcal N}(0, \Psi_1 + \Psi_{2,m} + \Delta_{\Psi,m}, \Upsilon_1+\Upsilon_{2,m} +\Delta_{\Upsilon,m}),$$ 
in probability. 
\end{itemize}
In the above expressions,
\[ \Delta_{\Psi,m}:= \big(\widetilde{\Psi}_m^{(\F, G)} - \Psi_m^{(\F, G)}  \big) + \big( 
 \widetilde{\Psi}_m^{(G, \F)} - \Psi_m^{(G, \F)}\big) \]
and
\[ \Delta_{\Upsilon,m} :=  \big(\widetilde{\Upsilon}_m^{(\F, G)}-\Upsilon_m^{(\F, G)} \big)
 + \big( \widetilde{\Upsilon}_m^{(G,\F)}\big) - \Upsilon_m^{(G, \F)}\big).\]
 \end{Lem}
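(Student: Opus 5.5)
The starting point is the decomposition $L_n^o = T^o_{n,m} + Q^\star_{n,m}$. Since $Q^\star_{n,m}$ and $V^\star_{n,m}$ are both built from the same bootstrap element $L_n^\star$, we work with the real two-dimensional vector $\big(({\rm Re}\,T^o_{n,m},{\rm Im}\,T^o_{n,m}),\,W^\star_{n,m}\big)^\top$. Covariance and relation operators of a complex element are recovered from $\Sigma$ by the identities listed after (\ref{eq.SpecMeansNormal}). Hence ${\rm Cov}^\star(L^o_n)$ splits into four parts:
\[
{\rm Cov}^\star(T^o_{n,m}) + {\rm Cov}^\star(Q^\star_{n,m}) + {\rm Cov}^\star(T^o_{n,m},Q^\star_{n,m}) + {\rm Cov}^\star(Q^\star_{n,m},T^o_{n,m}).
\]
The relation operator splits in the same way. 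We identify the limit of each part.

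\emph{The diagonal terms.} The map $(Z_{0,m},Z_{1,m})^\top = (S^o_m)^{1/2}(S^\star_{1,m})^{-1/2}V^\star_{n,m}$ is linear with a $\star$-deterministic (data-dependent) operator. It follows that ${\rm Cov}^\star((Z_{0,m},Z_{1,m})^\top) = (S^o_m)^{1/2}(S^\star_{1,m})^{-1/2}S^\star_{1,m}(S^\star_{1,m})^{-1/2}(S^o_m)^{1/2} = S^o_m$. Next we show $S^\star_{1,m}\to \Sigma(\Psi_{1,m},\Upsilon_{1,m})$ and $S^o_m \to \Sigma(\Psi_m,\Upsilon_m)$ in HS norm, in probability. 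The first limit follows from Lemma~\ref{le.App-CLT-1}(i) combined with $\|\widehat{\PP}_m-\PP_m\|_{\mathcal L}\to 0$ in probability; the latter holds for fixed $m$ and distinct eigenvalues, by standard perturbation bounds and $\sqrt n$-consistency of $\widehat\Gamma_0$ under Assumption~\ref{as.1}. For the second limit we add Lemma~\ref{le.m-Fixed-1}. Thus ${\rm Cov}^\star(T^o_{n,m})\to\Psi_m$ and ${\rm Rel}^\star(T^o_{n,m})\to\Upsilon_m$. Moreover ${\rm Cov}^\star(Q^\star_{n,m}) = {\rm Cov}^\star(L^\star_n)-{\rm Cov}^\star(T^\star_{n,m})-{\rm Cov}^\star(T^\star_{n,m},Q^\star_{n,m})-{\rm Cov}^\star(Q^\star_{n,m},T^\star_{n,m})$, which by Lemma~\ref{le.App-CLT-1} converges to $\Psi_1-\Psi_{1,m}-\Psi^{(\F,G)}_{1,m}-\Psi^{(G,\F)}_{1,m}$. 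Summing the diagonal parts gives $\Psi_1+\Psi_{2,m}-\Psi^{(\F,G)}_{1,m}-\Psi^{(G,\F)}_{1,m}$.

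\emph{The cross terms.} The cross-covariance of the real vectors equals $(S^o_m)^{1/2}(S^\star_{1,m})^{-1/2}{\rm Cov}^\star(V^\star_{n,m},W^\star_{n,m})$. By the above and Lemma~\ref{le.App-CLT-1} this converges to $C^{(m)}$, and likewise the reversed cross-covariance converges to $D^{(m)}$. Translating back to complex covariance and relation operators via the formulas ${\rm Cov}(A,B)={\rm Cov}({\rm Re}A,{\rm Re}B)+{\rm Cov}({\rm Im}A,{\rm Im}B)+i(\cdots)$ yields exactly $\widetilde\Psi^{(\F,G)}_m$, $\widetilde\Psi^{(G,\F)}_m$, $\widetilde\Upsilon^{(\F,G)}_m$ and $\widetilde\Upsilon^{(G,\F)}_m$. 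Adding these to the diagonal parts produces $\Delta_{\Psi,m}$ and $\Delta_{\Upsilon,m}$, which proves (i).

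\emph{The main obstacle} is the continuity of $S\mapsto S^{1/2}$ and $S\mapsto S^{-1/2}$ in HS norm. The first is routine, e.g.\ via $\|A^{1/2}-B^{1/2}\|\le\|A-B\|^{1/2}$. For the inverse square root we need $\Sigma(\Psi_{1,m},\Upsilon_{1,m})$ to be invertible on the finite-dimensional range of $\widehat{\PP}_m\otimes\widehat{\PP}_m$. We obtain this from Assumption~\ref{as.F}(i): it gives strictly positive, continuous eigenvalues of $\F^{(m)}_\lambda$, and hence positive definiteness of the Gaussian-comparison covariance $\Sigma(\Psi_{1,m},\Upsilon_{1,m})$ via the process $\breve{\mathcal X}$. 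On the compact set of invertible operators near the limit, the map $S\mapsto S^{-1/2}$ is then Lipschitz, which is what we use. A related technicality is that $\widehat{\PP}_m$ and $\PP_m$ have different ranges; we handle this by conjugating with a unitary that maps $\widehat{\HH}_m$ onto $\HH_m$ and tends to the identity.

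For (ii), $L^\star_n$ is conditionally Gaussian-approximable by Lemma~\ref{le.App-CLT-1}(ii). The element $L^o_n$ is the image of $(V^\star_{n,m},W^\star_{n,m})$ under the linear map $(v,w)\mapsto\big(A_n v, w\big)$ with $A_n=(S^o_m)^{1/2}(S^\star_{1,m})^{-1/2}$, followed by recombination into a complex element. Since $A_n\to A:=\Sigma(\Psi_m,\Upsilon_m)^{1/2}\Sigma(\Psi_{1,m},\Upsilon_{1,m})^{-1/2}$ in operator norm, in probability, a Slutsky-type argument together with the continuous mapping theorem gives weak convergence in probability. The limit is a complex Gaussian element whose covariance and relation operators are the limits identified in (i).
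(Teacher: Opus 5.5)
Your outline follows the paper's proof almost step by step. The paper also works with the real vector $\widetilde L^o_n=(S^\star_{1,m}+S^+_{2,m})^{1/2}(S^\star_{1,m})^{-1/2}V^\star_{n,m}+W^\star_{n,m}$. It splits its conditional covariance into four blocks with limits $\Sigma(\Psi_m,\Upsilon_m)$, $C^{(m)}$, $D^{(m)}$ and $\Sigma(\Psi^{(G,G)}_{1,m},\Upsilon^{(G,G)}_{1,m})$, using Lemmas~\ref{le.App-CLT-1}, \ref{le.App-3}, \ref{le.m-Fixed-1} and \ref{le.inverse}. It proves (ii) by your Slutsky argument: replace $(S^\star_{1,m}+S^+_{2,m})^{1/2}(S^\star_{1,m})^{-1/2}$ by its limit, apply the mapping theorem to the joint limit of $(\PP_mL^\star_n\PP_m,\,L^\star_n-\PP_mL^\star_n\PP_m)$, and kill the remainder with $\|V^\star_{n,m}\|_{HS}=\mathcal O_P(1)$.

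Two of your variations are worth keeping. Computing ${\rm Cov}^\star(Q^\star_{n,m})$ by subtraction is equivalent to the paper's $(G,G)$-block, but it makes the ``simple algebra'' explicit and shows that $\Delta_{\Psi,m}$ must involve $\Psi^{(\F,G)}_{1,m}$ and $\Psi^{(G,\F)}_{1,m}$. Rotating $\widehat{\HH}_m$ onto $\HH_m$ handles the differing ranges, which the paper ignores when it applies Lemma~\ref{le.inverse}.

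The step that does not work is the one you single out as the main obstacle. Assumption~\ref{as.F}(i) and the comparison process $\breve{\mathcal X}$ only show that $\Sigma(\Psi_{1,m},\Upsilon_{1,m})$ is a limit of covariance operators, i.e.\ nonnegative definite. Strict positivity of $\F^{(m)}_\lambda$ does not upgrade this to invertibility, and in fact the operator is singular on the space where you want to invert it:
\begin{itemize}
\item Since $W$ is real and $P_{n,\lambda}$, $P^\star_{n,\lambda}$, $\F_\lambda$, $\widehat\F_\lambda$ are self-adjoint, $T_{n,m}$ and $T^\star_{n,m}$ are self-adjoint. This confines $V_{n,m}$ and $V^\star_{n,m}$ to a proper subspace.
\item Most transparently, take an even $W$, e.g.\ $W\equiv1$ (Example~2 with $h=0$, the case simulated in Section~\ref{sec.NumRes}). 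The formulas then give $\Psi_1=\Upsilon_1$, hence $\Psi_{1,m}=\Upsilon_{1,m}$, so the block ${\rm Re}\{\Psi_{1,m}-\Upsilon_{1,m}\}/2$ of $\Sigma(\Psi_{1,m},\Upsilon_{1,m})$ vanishes.
\item Correspondingly, $L^\star_n$ is conjugation invariant, ${\rm Im}\{T^\star_{n,m}\}=0$, and $S^\star_{1,m}$ has a zero diagonal block.
\end{itemize}
So $(S^\star_{1,m})^{-1/2}$, the Lipschitz property of $S\mapsto S^{-1/2}$ near the limit, and the identity ${\rm Cov}^\star\big((S^o_m)^{1/2}(S^\star_{1,m})^{-1/2}V^\star_{n,m}\big)=S^o_m$ are not available as you state them. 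Likewise, $S^+_{2,m}$ is a difference of two operators, so $S^o_m$ need not be nonnegative definite for finite $n$, while $\|A^{1/2}-B^{1/2}\|\le\|A-B\|^{1/2}$ needs $A,B\ge 0$.

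The paper's proof has the same hole: it simply invokes Lemma~\ref{le.inverse}, whose hypothesis $0\notin\sigma(S_n)\cup\sigma(S)$ is assumed rather than verified. So this is not a departure from the paper, but your justification for it is incorrect. To close it:
\begin{itemize}
\item Work on the data-free, $W$-dependent subspace $\mathcal V$ that carries $V_{n,m}$, $V^\star_{n,m}$ and $V^+_{n,m}$ (after your rotation), and check that $S^+_{2,m}$ maps $\mathcal V$ into itself.
\item Read $(S^\star_{1,m})^{-1/2}$ as the inverse on $\mathcal V$.
\item Prove or assume that $\Sigma(\Psi_{1,m},\Upsilon_{1,m})$ is positive definite on $\mathcal V$; this is where Assumption~\ref{as.F}(i) can enter.
\item Either assume the same for $\Sigma(\Psi_m,\Upsilon_m)$ or replace $S^o_m$ by its positive part. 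The assumption can genuinely fail for non-Gaussian processes, since $\Sigma(\Psi_{2,m},\Upsilon_{2,m})$ can cancel the Gaussian part.
\end{itemize}
Lemma~\ref{le.inverse} then applies on $\mathcal V$, and the rest of your argument goes through.
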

 
By the  above lemma and for any  $m$ fixed,    the limiting distribution of $ L_{n}^o$  is a Gaussian  random element in $HS({\mathcal H})$.   Due to the correction made in Step 6,  the    bootstrap estimators of  
the covariance and of  the relation operator   are enriched with the fourth order terms $\Psi_{2,m}$ and $ \Upsilon_{2,m}$; compare the  analogue  expressions in Lemma~\ref{le.App-CLT-1}.
However, two approximation errors  occur.

The first error consists of the two  terms $\Delta_{\Psi,m} $ and $ \Delta_{\Upsilon,m}$ appearing in the  limiting expressions for the covariance and the relation operator, respectively. 
As an  inspection of the proof of Lemma~\ref{le.m-Fixed-2} reveals, $\Delta_{\Psi,m} $ is  due  to the difference between the limits of the  two covariance operators 
$$ {\rm Cov}(V^\star_{n,m},W^\star_{n,m})   \ \ \mbox{and} \ \  {\rm Cov}^\star\big( (S_m^o)^{1/2} (S_{1,m}^\ast)^{-1/2} V_{n,m}^{\star},  W_{n,m}^{\star} \big),$$  
while $\Delta_{\Upsilon,m} $ 
is due to the difference  between the limits of the two  relation operators 
$$  {\rm Rel}(V^\star_{n,m}, W^\star_{n,m})  \ \ \mbox{and}    \ \ 
{\rm Rel}^\star\big( (S_m^o)^{1/2} (S_{1,m}^\ast)^{-1/2} V_{n,m}^{\star},  W_{n,m}^{\star} \big) .$$
Recall that  
 $ V^\star_{n,m} =  ({\rm  Re}\{T^\star_{n,m}\}, {\rm Im}\{T^\star_{n,m}\})^\top$ while  $ W^\star_{n,m} =   ({\rm  Re}\{Q^\star_{n,m}\}, {\rm Im}\{Q^\star_{n,m}\})^\top$. 
 Both error  terms,  $\Delta_{\Psi,m} $ and $\Delta_{\Upsilon,m} $,  can be bounded  in a similar way. For   instance  we get for the term  $\Delta_{\Psi,m}$, 
  \[  \|\Delta_{\Psi,m}\|_{HS} = {\mathcal O}_{P}(1) \| (S_m^o)^{1/2} (S_{1,m}^\ast)^{-1/2} -Id\|_{HS}\sqrt{ \int_{-\pi}^\pi ||\F^{(m)}_\lambda- \F_\lambda\|^2_{HS} d\lambda}  .\] 
Hence,   the closer  is  the spectral density operator $ \F^{(m)}_\lambda$  
 to the spectral density operator $ \F_\lambda$, the smaller  will $  \Delta_{\Psi,m}$ be. Also,  if $ \Psi_{2,m}=0$,
  then 
 $  (S_m^o)^{1/2} (S_{1,m}^\ast)^{-1/2} \stackrel{P}{\rightarrow}  Id $ in HS-norm, as $ n \rightarrow \infty$ ($m$ is fixed), that is the term $  \| (S_m^o)^{1/2} (S_{1,m}^\ast)^{-1/2} -Id\|_{HS}$ will vanish in probability, asymptotically  and for any fixed $m$. 
The same   conclusions  can   be 
 made   for the error term  $  \Delta_{\Upsilon,m}$.

 The second error  which appears in  the limiting expressions given in  Lemma~\ref{le.m-Fixed-2}, is related to  the fourth order terms  affecting the distribution of $L_n$. 
 Recall   that  the  complementary resampling procedure based on convolved periodogram operators  of subsamples only  is applied  in order to correct for  the missing fourth order terms related to 
 the part $T_{n,m}^\star =\PP_mL_n^\star \PP_m$  of the decomposition of $ L_n^\star $.
 The   error made  in this context   is  essentially due to the relevant fourth order characteristics of the remainder 
 $ Q^\star_{n,m}  = L_n^\star - T_{n,m}^\star$, which are not captured by this correction procedure.
Clearly,  this  error  depends on    the differences  
$ \Psi_2-\Psi_{2,m}$ and $ \Upsilon_2-\Upsilon_{2,m}$ for the covariance and for  the relation operator, respectively.  Both  differences  can  
be   
bounded by  $\| \F^{({\mathcal X}_m)}_{\lambda_1,-\lambda_1,-\lambda_2} -  \F_{\lambda_1,-\lambda_1,-\lambda_2}\|_{HS}$, that is,  by the difference between the fourth order spectral density operator 
of the $m$-approximating process $ {\mathcal X}_m$ and the fourth order spectral density operator of  the underlying process $ {\mathcal X}$. 
The better    the  fourth order structure of the underlying functional process is approximated  by the fourth order structure of   the process $\{X_{t,m}, t\in\Z \}$,  
the smaller will   be the difference $\|\Psi_2-\Psi_{2,m}\|_{HS}$, respectively,  $ \|\Upsilon_{2}-\Upsilon_{2,m}\|_{HS}$. 

It is worth mentioning that   if $\{X_t,t\in\Z\}$  is a Gaussian process in ${\mathcal H}_\R$, then the fourth order cumulant  spectral density operators  vanish, which implies that the second   error terms  are zero, i.e. $\Psi_2=\Upsilon_2=0 $ and, consequently, $ \Psi_{2,m}=\Upsilon_{2,m}=0$ for any $m\in\N$.
 In this  particular case it  also can be shown that $\|S_m^o -  S_{1,m}^\ast  \|_{HS} \stackrel{P}{\rightarrow} 0$, because $ \|S_{2,m}^+\|_{HS} \stackrel{P}{\rightarrow} 0$ as $n\rightarrow \infty$. This   implies that 
also the first approximation error as described  by the terms $  \Delta_{\Psi,m} $ and $   \Delta_{\Upsilon,m}$, will  vanish asymptotically for any fixed $m$. 
Apart from such  specific cases,  however, for  general stationary functional processes,   both errors become  asymptotically negligible if the    dimension   $m$  
used  in the   decomposition  of  $L_n^\star$,  increases to infinity at an appropriate rate as the sample size $n$ increases to infinity. This case,   which establishes  consistency of the frequency domain bootstrap procedure for general functional  processes,  
is addressed in     the next section.

\subsection{Bootstrap consistency}

In order to   allow  for  the decomposition parameter   $m$   to increase, 
the following  additional assumption is needed which concerns  the rate with which $m$  is allowed to  increase to 
infinity as the sample size $n$ increases to infinity,

\begin{Assu}   \label{as.4}
$m=m(n) \rightarrow  \infty$ as $ n\rightarrow \infty$ such that $ n^{-1/2} \sqrt{\sum_{j=1}^m 1/\alpha_j^2}  \rightarrow 0$,
where $\alpha_1=\sigma_1-\sigma_2$ and $ \alpha_j=\min\{\sigma_{j-1}-\sigma_j, \sigma_j-\sigma_{j+1}\}$ for $ 2\leq j \leq m$
\end{Assu}


\begin{Rem} Assumption~\ref{as.4}   is needed in order to control asymptotically the estimation error made by replacing the  unknown projection operator $ \PP_m$ by its 
 sample  counterpart  $\widehat{\PP}_m$ given  in (\ref{eq.EmpiricalProjectionOperator}).
According to this assumption,   the rate with which $m$ is allowed to increase to infinity  has to  take into account the 
 spectral gap $\alpha_j$, $j=1,2 \ldots, m$, i.e.,   the rate of decrease of the eigenvalues $\sigma_j$ of the lag zero covariance operator $ \Gamma_0$.
  Assume for instance   a  polynomial decay  of the eigenvalues $\sigma_j$, more specifically suppose  that $ \sigma_j-\sigma_{j+1} \geq Cj^{-\theta}$, for some constants  $ C>0$  
 and $\theta >1$. Integral approximation  of the sum   yields  the bound
 \[ \frac{1}{\sqrt{n}}\sqrt{\sum_{j=1}^m \frac{1}{\alpha_j^2}} \leq  
 \frac{\sqrt{C}}{\sqrt{2\theta +1}} \frac{1}{\sqrt{n}} (m+1)^{\theta +1/2},\]
which implies that in this  case, Assumption~\ref{as.4} is satisfied if   $m$ is allowed to  increase as $ m \sim n^{\beta}$ for any   $\beta \in (0, 1/(2\theta+1))$. 
\end{Rem}

Recall the final bootstrap proposal  $ L_n^o =T_{n,m}^o + Q^\star_{n,m}$ in Step 7 of the bootstrap algorithm, where $ T_{n,m}^o =(1, {\rm i})^\top (S_{1,m}^\star + S_{2,m}^+)^{1/2} (S_{1,m}^\star)^{-1/2} V_{n,m}^\star$,  is the  corrected leading part.   We conclude our  theoretical investigations with 
the following  theorem   which shows that, allowing   $m$ to increase to infinity as $n$ increases to infinity under the assumptions made,  $S_{2,n}^+$ consistently  estimates the  entire  operators $ \Psi_2$ and $ \Upsilon_2$  attributed to  the fourth order structure of $ \{X_t,t\in\Z\}$. Furthermore,  under the same assumptions, 
the  covariance and  the relation  operator of the leading  part $ T_{n,m}^o  $,   consistently estimate   
the entire covariance and relation operators, $\Psi= \Psi_1+\Psi_2$ and $ \Upsilon=\Upsilon_1+\Upsilon_2$, respectively. Finally,  if   the process ${\mathcal X}$   
fulfills  the conditions required such that  the sequence $\big(L_n=\sqrt{n}\big(M_n(W,P_n)-M_n(W,F) \big), n\in \N\big)$ satisfies (\ref{eq.SpecMeansNormal}), then 
consistency of the bootstrap  proposal  $L_n^o$  for  estimating the   distribution of $L_n$  also is established. 

\begin{Thrm}  \label{th.main}Under Assumption~\ref{as.1}, \ref{as.W}, \ref{as.F}, \ref{as.3} and \ref{as.4}, it holds true that, as $n\rightarrow \infty$, 
\begin{enumerate}
\item[(i)] \ $ \big\|S_{2,m}^+ -  \Sigma(\Psi_2,\Upsilon_2)\big\|_{HS} \stackrel{P}{\rightarrow} 0$
\item[{\rm (ii)}] \  $\|{\rm Cov}^\star(T^o_{n,m}) -  (\Psi_1+\Psi_2)\|_{HS}   \stackrel{P}{\rightarrow} 0$  and 
 $ \|{\rm Rel}^\star(T_{n,m}^o)   -   (\Upsilon_1+\Upsilon_2)\|_{HS}   \stackrel{P}{\rightarrow}0$
 \item[(iii)] \  $L_n^o  \stackrel{{\mathcal D}}{\rightarrow}  {\mathcal C}{\mathcal N}(0,\Psi_1+\Psi_2,\Upsilon_1+\Upsilon_2),$  in probability.
  \end{enumerate} 
\end{Thrm}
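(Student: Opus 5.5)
The plan is to reduce all three assertions to the fixed-$m$ results (Lemma~\ref{le.m-Fixed-1}, Lemma~\ref{le.m-Fixed-2}) together with Lemma~\ref{Lem:CBPConsistencyCovariance}. We control uniformly in $m=m(n)$ the two error sources: the replacement of $\PP_m$ by $\widehat{\PP}_m$, and the truncation of $\Psi_2,\Upsilon_2$ to $\Psi_{2,m},\Upsilon_{2,m}$.

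\emph{Step 1: projection error.} Standard perturbation bounds give $\|\widehat v_r - s_r v_r\|\le 2\sqrt2\,\|\widehat\Gamma_0-\Gamma_0\|_{HS}/\alpha_r$, where $s_r$ is a sign. Assumption~\ref{as.1} gives $\|\widehat\Gamma_0-\Gamma_0\|_{HS}=O_P(n^{-1/2})$. Hence
\[ \|\widehat{\PP}_m-\PP_m\|_{HS}=O_P\Big(n^{-1/2}\sqrt{\textstyle\sum_{r\le m}\alpha_r^{-2}}\Big)=o_P(1) \]
by Assumption~\ref{as.4}. Since the maps $A\mapsto (\PP\otimes_{op}\PP)A(\PP\otimes_{op}\PP)$ are Lipschitz in $\PP$ (with constant bounded by $\|A\|$), every operator built from $\widehat{\PP}_m$ can be replaced by its $\PP_m$ version at an $o_P(1)$ cost, provided the operators it acts on are $O_P(1)$ in HS-norm. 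For (i), write $S_{2,m}^+=\Cov^+(V^+_{n,m})-\Sigma(\Psi^+_{1,m},\Upsilon^+_{1,m})$. Here $\Cov^+(V^+_{n,m})=\Sigma$ applied to the $\widehat{\PP}_m$-compressions of $\Cov^\star(L_n^+)$ and ${\rm Rel}^\star(L_n^+)$. By Lemma~\ref{Lem:CBPConsistencyCovariance} these are within $o_P(1)$ of the compressions of $\Psi,\Upsilon$, and then within $o_P(1)$ of $\Psi_m,\Upsilon_m$. Similarly, $\Psi^+_{1,m}\to\Psi_{1,m}$: the $\widetilde\F_{\lambda_{j,b}}$ are uniformly consistent (this follows from Assumption~\ref{as.1}(ii),(iii) and $b^3/n\to0$, as in the proof of Lemma~\ref{le.m-Fixed-1}), and Riemann sums converge since $W$ has bounded variation. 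Finally $\Psi_{2,m}-\Psi_2\to0$ in HS by dominated convergence: $\F^{(\mathcal X_m)}_{\lambda_1,\lambda_2,\lambda_3}\to\F_{\lambda_1,\lambda_2,\lambda_3}$ pointwise in nuclear norm because $\PP_m\to\Id$ strongly and $\F_{\cdot,\cdot,\cdot}$ is trace class and bounded. The same applies to $\Upsilon$.

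\emph{Step 2: covariance of $T^o_{n,m}$.} By construction $\Cov^\star\big((S^o_m)^{1/2}(S^\star_{1,m})^{-1/2}V^\star_{n,m}\big)=S^o_m=S^\star_{1,m}+S^+_{2,m}$. This is exact because $\Cov^\star(V^\star_{n,m})=S^\star_{1,m}$, and $(S^\star_{1,m})^{-1/2}$ is taken on the range of $S^\star_{1,m}$, which contains the support of $V^\star_{n,m}$. By Lemma~\ref{le.App-CLT-1}(i) combined with Step 1, $S^\star_{1,m}=\Sigma(\Psi_1,\Upsilon_1)+o_P(1)$, and by (i), $S^+_{2,m}=\Sigma(\Psi_2,\Upsilon_2)+o_P(1)$. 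Translating back from $\Sigma(\cdot,\cdot)$ to $(\Cov,{\rm Rel})$ via the isometry $\|\Sigma(C,R)\|^2_{HS}=2\|C\|^2_{HS}+2\|R\|^2_{HS}$ then yields (ii).

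\emph{Step 3: the distributional statement (iii).} Mirroring Lemma~\ref{le.m-Fixed-2}, decompose $\Cov^\star(L^o_n)=\Cov^\star(T^o)+\Cov^\star(Q^\star)+$ cross terms. First, $\Cov^\star(Q^\star_{n,m})$ is within $o_P(1)$ of $\Psi_1-\Psi_{1,m}-\Psi^{(\F,G)}_{1,m}-\Psi^{(G,\F)}_{1,m}$. Second, the cross terms equal $C,D$-type expressions. The key bound, already displayed after Lemma~\ref{le.m-Fixed-2}, is
\[ \|\Delta_{\Psi,m}\|_{HS}=O_P(1)\,\|(S^o_m)^{1/2}(S^\star_{1,m})^{-1/2}-\Id\|\Big(\int\|\F^{(m)}_\lambda-\F_\lambda\|^2_{HS}d\lambda\Big)^{1/2}. \]
Since $\F^{(m)}_\lambda\to\F_\lambda$ in HS uniformly in $\lambda$ by continuity and compactness, the integral tends to $0$.

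The main obstacle is to show that the prefactor remains $O_P(1)$ as $m\to\infty$. The difficulty is that $(S^\star_{1,m})^{-1/2}$ is unbounded in the limit, since the eigenvalues of $\Sigma(\Psi_{1,m},\Upsilon_{1,m})$ accumulate at $0$. My first attempt is to avoid the operator bound entirely and bound the cross covariance directly. By Cauchy--Schwarz, $\|\Cov^\star(AV,W)\|\le \|\Cov^\star(AV)\|^{1/2}\|\Cov^\star(W)\|^{1/2}$ with $\Cov^\star(AV)=S^o_m$ bounded. Writing $W^\star_{n,m}=W^\star_{\parallel}+W^\star_{\perp}$ (the part correlated with $V^\star$ and the residual), the correlated part carries covariance of order $\int\|\F^{(m)}_\lambda-\F_\lambda\|\,d\lambda\to0$. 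This makes both $\Delta$-type terms vanish without inverting $S^\star_{1,m}$ uniformly.

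Given the covariance convergence, Gaussianity follows as follows. Conditionally, $J^\star$ is Gaussian, so $L^\star_n$, and hence the linear transform $L^o_n$, is a sum of $N$ independent terms. A Lindeberg argument in $HS(\HH)$ applies: use the moment bounds of Lemma~\ref{le.Mom} for the finite-dimensional convergence, and use tightness from uniform control of $\sum_{r>K}$ tail traces of the covariance, which is implied by nuclear-norm consistency from Assumption~\ref{as.F}(ii). This yields (iii).
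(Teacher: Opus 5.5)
Your parts (i) and (ii), and the second-moment half of Step~3, are sound and essentially follow the paper. For the cross terms, Cauchy--Schwarz together with $\E^\star\|Q^\star_{n,m}\|_{HS}^2\to0$ and $\|S^o_m\|_{\mathcal L}=O_P(1)$ already suffices, so the split $W^\star_{n,m}=W^\star_\parallel+W^\star_\perp$ is unnecessary.

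\textbf{The gap is in the final paragraph of Step~3.} Conditionally on the data, $L^o_n=\sum_{j=1}^N\mathcal A_n(Z^\star_{j,n})$. Here $\mathcal A_n$ is a fixed real-linear map on $HS(\HH)$ that contains the factor $(S^o_m)^{1/2}(S^\star_{1,m})^{-1/2}$. Lemma~\ref{le.Mom} controls moments of $Z^\star_{j,n}$, not of $\mathcal A_n(Z^\star_{j,n})$, and the best it yields is
\[
\sum_{j=1}^N\E^\star\|\mathcal A_n(Z^\star_{j,n})\|_{HS}^4\le\|\mathcal A_n\|_{\mathcal L}^4\,O_P(n^{-1}),\qquad \|\mathcal A_n\|_{\mathcal L}\lesssim 1+\|S^o_m\|_{\mathcal L}^{1/2}\,\lambda_{\min}(S^\star_{1,m})^{-1/2},
\]
where $\lambda_{\min}$ is the smallest eigenvalue on the range. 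The coordinatewise Lyapunov argument has the same problem, since $\ell\circ\mathcal A_n$ can have norm of order $\|\mathcal A_n\|_{\mathcal L}$.

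Now $\mathrm{trace}(S^\star_{1,m})\le\E^\star\|L^\star_n\|^2_{HS}=O_P(1)$ is spread over a space of dimension of order $m^2$, so $\lambda_{\min}(S^\star_{1,m})\to0$. Nothing in Assumptions~\ref{as.1}--\ref{as.4} ties this rate to $n$: Assumption~\ref{as.4} concerns the eigengaps of $\Gamma_0$, not the bottom of the spectrum of $\widehat{\PP}_m\widehat{\F}_\lambda\widehat{\PP}_m$. Nor is there any reason for $(S^o_m)^{1/2}(S^\star_{1,m})^{-1/2}$ to stay bounded. $S^+_{2,m}$ is only an HS-consistent estimate, and it need not be dominated by $S^\star_{1,m}$ in the directions where the latter is small.

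So the obstacle you removed from the covariance computation comes back unchanged in the fourth-moment (uniform negligibility) condition. In the covariance computation, $\Cov^\star\big((S^o_m)^{1/2}(S^\star_{1,m})^{-1/2}V^\star_{n,m}\big)=S^o_m$ cancels the inverse exactly; no such cancellation occurs at fourth order.

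Tightness has a similar hole. Up to $o_P(1)$, the tail traces of $\Cov^\star(L^o_n)$ are governed by $S^o_m=S^\star_{1,m}+S^+_{2,m}$, and Assumption~\ref{as.F}(ii) controls only $S^\star_{1,m}$. For $S^+_{2,m}$ you have HS-norm convergence only, which does not control tail traces. You would need a nuclear-norm version of Lemma~\ref{Lem:CBPConsistencyCovariance}.

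\textbf{How the paper avoids this.} The paper never pushes a CLT through a growing standardization. It proves Gaussianity only for fixed $m$. There $(S^o_m)^{1/2}(S^\star_{1,m})^{-1/2}$ converges to the fixed bounded map $\Sigma(\Psi_m,\Upsilon_m)^{1/2}\Sigma(\Psi_{1,m},\Upsilon_{1,m})^{-1/2}$ on a finite-dimensional space (Lemma~\ref{le.inverse}). Hence $T^o_{n,m}\to\mathcal{CN}(0,\Psi_m,\Upsilon_m)$ is inherited from Lemma~\ref{le.App-CLT-1}(ii) by Slutsky and continuous mapping, with no new Lindeberg condition and no trace-norm requirement on $S^+_{2,m}$.

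The passage $m\to\infty$ is then made on the Gaussian side, via trace-norm convergence of $(\PP_m\otimes_{op}\PP_m)\Psi(\PP_m\otimes_{op}\PP_m)$ to $\Psi$ and Lemma~\ref{le.App-2}. The pieces are glued by Billingsley's Theorem~3.2 using only $\E^\star\|Q^\star_{n,m}\|_{HS}^2$, which Assumption~\ref{as.F}(ii) handles. This fixed-$m$ reduction is what your Step~3 lacks.

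Be aware, though, that the paper's step (c) writes $L^o_n-T^o_{n,m}=Q^\star_{n,m}$. That is, it uses the same $m$ in $L^o_n$ as in the approximant. With $m=m(n)$ in $L^o_n$, one still has to control $T^o_{n,m(n)}-T^o_{n,m}$. So some uniformity in $m$ of the correction map is the real crux in either route, and your proposal names it without resolving it.
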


%
%

\section{Practical  Issues and Simulations} \label{sec.Impl-Sim}
\subsection{Choice of  Bootstrap Parameters}

To implement the frequency domain bootstrap procedure one has to select the estimator of the spectral operator $ \widehat{F}_{\lambda}$, the  decomposition  parameter $m$ and the subsampling   size $b$. 

Our numerical  experiments   suggest that the most crucial choice is that of  choosing   $ \widehat{F}_\lambda$. Recall  that for estimating ${\mathcal F}_\lambda$, some alternatives exist, like for instance,  lag window  or   spectral window estimators, where the latter   are   given by  
 	\begin{equation}
		\wh{\F}_{\lambda_{j,n}} = \frac{1}{n} \sum_{\lambda_{s,n}} K_{h}(\lambda_{j,n}-\lambda_{s,n}) P_{n,\lambda_{s,n}}.
	\end{equation} 
	Here $ K_h(\cdot) =h^{-1}K(\cdot/h)$ is  a kernel function with $K$ satisfying certain conditions and $h$  a smoothing bandwidth. Different  approaches  to choose $h$ have been proposed in the finite  dimensional time series case. In our simulation experiments, we use  different values  of $h$ in order to investigate the sensitivity of the results obtained with respect to the choice of this tuning parameter.

The choice of $m$ has been investigated under a variety of different settings in the literature and some different  proposals have been made; see among others,  
\cite{HorvathKokoszka2012},  \cite{LiWangCarolli2013}, \cite{HoermannKidzinski2015},  \cite{AueKlepsch2017}  and \cite{Paparoditis2018}.  A well known  simple rule,   initially proposed   for   i.i.d. functional data, is  the $VR_n(m)$  rule.  According to this rule $m$ is selected as the smallest integer for which the ratio \[
VR_n(m)=\sum_{j=1}^m \widehat{\sigma}_j/ \sum_{j=1}^n \widehat{\sigma}_j = \|\widehat{\PP}_m\widehat{\Gamma}_0 \widehat{\PP}_m\|_N/  \|\widehat{\Gamma}_0\|_N,\] exceeds some  pre-specified value $Q$, where $Q=0.80$ or $ Q=0.85$ are  two common choices. For our set up, a more appropriate  criterion  to select $m$ is one which  explicitly takes into account the temporal dependence of the  functional  data  at hand. In  this context,  the  use of  the following   dependent variance ratio ($DeVR_n$)  criterion is suggested, according to which,   $m$ is selected as the smallest integer for which the ratio
\begin{equation} \label{eq.DVR}
DeVR_n(m)=  \sum_{j\in{\mathcal G}(n)}  \|\widehat{\PP}_m P_{n,\lambda_{j,n}} \widehat{\PP}_m\|_N\Big/  \sum_{j\in{\mathcal G}(n)}  \| P_{n,\lambda_{j,n}} \|_N ,
\end{equation}
exceeds  a pre-specified value $Q$.   Note that   the sums of the eigenvalues of the lag zero sample autocovariance operator  $ \widehat{\Gamma}_0$ and of its projection
appearing in $VR_n$, have been replaced in the  $DeVR_n$ criterion by the sum of the (integrated) eigenvalues of the periodogram operator
$P_{n,\lambda_{j,n}}$ and of its projection, respectively.  It can be shown that for $m$ fixed,
\[  \sum_{j\in{\mathcal G}(n)}  \|\widehat{\PP}_m P_{n,\lambda_{j,n}} \widehat{\PP}_m\|_N\Big/  \sum_{j\in{\mathcal G}(n)}  \| P_{n,\lambda_{j,n}} \|_N  \stackrel{P}{\rightarrow} 
\int_{-\pi}^\pi \| \F_\lambda^{(m)}\|_N\Big/ \int_{-\pi}^\pi   \| \F_\lambda \|_N,\]
as $n\rightarrow\infty$. From this we see that   if  the  random elements $X_t$ are  uncorrelated then $ {\mathcal F}_\lambda=(2\pi)^{-1}\Gamma_0$, which implies that in this  case 
 $ DeVR_n$  coincides  (asymptotically) with  $VR_n$.

Finally,  and regarding the subsampling parameter  $b$, our simulation results suggest that  the quality of the bootstrap approximations obtained  are not sensitive with respect to the choice of this parameter, provided b is not too small, i.e.,  $ b$ is chosen to be larger than $\lfloor n/4\rfloor$.

\subsection{Numerical results} \label{sec.NumRes}

 To  investigate the finite sample performance of the frequency domain bootstrap proposed,   some numerical experiments   have been conducted 
using 
functional time series  $X_1, X_2, \ldots, X_n$, of length $ n=128$, where $ X_t\in \HH_\R=L^2([0,1], \R)$  and 
  the random elements $ X_t$ are observed over a grid of $T$ points in the interval $[0,1]$, that is, $ X_t(\tau_j)$, $ j=1,2, \ldots, T, $ and $t=1,2, \ldots, n,$ are the  observations available.  Note that 
  in this case,  the  kernel of the operator $ L_n$ can be described by a $ T\times T$ matrix with 
 elements  
 \[\ell_n(\tau_r,\tau_s)= \frac{2\pi}{\sqrt{n}} \sum_{j\in {\mathcal G}(n)} W(\lambda_{j,n}) \big( p_{\lambda_{j,n}}(\tau_r,\tau_s) -  f_{\lambda_{j,n}}(\tau_r,\tau_s)\big),\]
for $r,s \in \{1,2, \ldots, T\}$,  where $ p_{\lambda_{j,n}}(\tau_r,\tau_s)$ and $f_{\lambda_{j,n}}(\tau_r,\tau_s) $ denote the periodogram and the spectral density kernels   associated with the periodogram and spectral density operators $ P_{\lambda_{j,n}}$ and $ F_{\lambda_{j,n}}$, respectively.  The  kernel of   $ T_{n,m}=\PP_m L_n \PP_m$ is then given by  
 \begin{equation} \label{eq.TnmL2} t_{n,m}(\tau_r, \tau_s) =\sum_{k_1=1}^m\sum_{k_2=1}^m \Xi(k_1,k_2) v_{k_1}(\tau_r)v_{k_2}(\tau_s)
 \end{equation}
 where  $v_k=(v_{k}(\tau_1), v_k(\tau_2), \ldots, v_k(\tau_T)) $ is an  orthonormalized eigenvector associated with the $ k$th  largest eigenvalue of the  covariance operator $ \Gamma_0$, which has  kernel $ \gamma_0(\tau_r,\tau_s)=\E (X_t(\tau_r)X_t(\tau_s))$, $ r,s \in \{1, 2, \ldots, T\}$. Furthermore, 
 \[ \Xi(k_1, k_2) =\sum_{p=1}^T\sum_{q=1}^T \ell_n(\tau_p,\tau_q) v_{k_1}(\tau_q) v_{k_2}(\tau_p), \ \  k_1,k_2=1,2, \ldots, m,\]
 and the   $ m\times m$ random matrix $ \Xi_m=\big(\Xi(k_1,k_2)\big)_{k_1,k_2=1,2, \ldots,m}$, captures the entire stochastic information associated with the operator  $ T_{n,m}$.

In our simulations, the  random curves $X_t$ considered are approximated using $25$ equidistant points in the unit interval and they are transformed into functional objects using the Fourier basis with $D=25$ basis functions,  denoted by $f_1,f_2, \ldots, f_D$ in the sequel.    Three commonly used functional  autoregressive (FAR(1)) and  functional moving-average (FMA(1)) models are considered:
\begin{enumerate}
\item[] \ Model I: \ \  \ \ \  $ X_t =\Psi_1(X_{t-1}) + \varepsilon_{1,t}$
\item[] \  Model II: \ \  \ \ $ X_t =\Psi_1(\varepsilon_{2,t-1}) + \varepsilon_{2,t}$
\item[] \   Model III: \ \  $ X_t =\Psi_2(\varepsilon_{3,t-1}) + \varepsilon_{3,t}$.
\end{enumerate}
The $\varepsilon_{j,t}$, $j=1,2,3$, are  i.i.d. functional  innovations  which   follow  different laws: 
$ \varepsilon_{1,t},$ is a standard Brownian motion,  $ \varepsilon_{2,t}  =\sum_{j=1}^D Z_j f_j$ and $ \varepsilon_{3,t}=\sum_{j=1}^D U_j f_j$, where the $Z_j$ are independent Gaussian random variables with mean zero and  standard deviation $ 1/j$, while the $ U_j$ are independent uniform distributed random variables in the interval $ [-\sqrt{3}/j, \sqrt{3}/j]$.   $ \Psi_1$  and $\Psi_2$ are integral operators with kernels $ \psi_1(u,v) = C_1\cdot e^{-(u^2+v^2)/2}$ and
$\psi_2(u,v)=C_2\cdot \min\{u,v\}$,  $ u,v\in L^2[0,1]$, respectively. The  constants $ C_1$ and $ C_2$ are specified such that $ \|\Psi_1\|_{\mathcal L}=\|\Psi_2\|_{\mathcal L}=0.8$.
The FMA(1) Model III is of particular interest since, in contrast to the FAR(1) Model I and  the FMA(1)  Model II,  the fourth order spectral density operator does not vanish (recall that    
$ \E (U_j^4)/(\E (U_j^2))^2 -3 = -6/5$).  To see the effect  of the correction step using the convolved subsampling procedure, we 
report for this case also the estimation results obtained which are  based on the (non corrected)  bootstrap  quantity $L_n^\star$; see Step 3 of the bootstrap algorithm.

We first investigate the fine sample behavior of  $ VR_n$ and $DeVR_n$ to select the projection dimension $m$.  
Table~\ref{tab.Table-m}  shows   results for  the values of $m$ selected over 1,000 trials.   As it is seen from this table, for the models  and for  the sample size considered, 
the two methods behave  similar with the $ DeVR_n$ criterion, which takes into account the temporal dependence of the functional data, selecting   slightly more often higher values of $m$ compared to the $VR_n$ criterion. 

\begin{table}[t]
\begin{center}
\setlength{\tabcolsep}{2.6mm}
\renewcommand{\arraystretch}{1.2}
\begin{tabular}{|c|cc|cc|cc|}
\hline
 m & \multicolumn{2}{l|}{\textbf{Model I}} & \multicolumn{2}{l|}{\textbf{Model II}} & \multicolumn{2}{l|}{\textbf{Model III}} \\
   & $VR_n$ & $DeVR_n$ & $VR_n$ & $DeVR_n$ & $VR_n$ & $DeVR_n$ \\
\hline
 1 & - & - & - & - & - & - \\
 2 & 99.5 & 99.5 & 30.6 & 28.8 & 52.8 & 52.0  \\
 3 & 0.05 & 0.05 & 69.0 & 70.8 & 47.2 & 48.0  \\
 4 & - & - & 0.04 & 0.04 & - & - \\
\hline
\end{tabular}\hspace*{0.3cm}
\vspace*{2mm}
\caption{Frequency of selected values  of $m$ for the three models considered  by using the $VR_n$ and the $DeVR_n$ criterion with $Q=0.85$. \label{tab.Table-m}
} 
\end{center}
\end{table}

In our simulation study, the target parameter is the standard deviation  $Std(\tau_r)$ , $r=1,2, \ldots, T$,  of the sample lag zero autocovariance kernel, 
\[ \widehat{\gamma}_0(\tau_r,\tau_r)=\frac{1}{T}\sum_{t=1}^n X^2_t(\tau_r), \ \ r=1,2, \ldots, 25,\]
approximated  over   the grid of  $ 25$  equidistant points  in the interval $[0,1]$.   The  standard deviations  $Std(\tau_r)$, $r=1,2, \ldots, 25$,  are estimated using the frequency domain bootstrap 
procedure proposed.   To obtain the  exact  values  
   of the target function  $Std(\tau)$, 
   $10,000$ repetitions of  all three models have been used. 
   The spectral density operator  $\widehat{F}_\lambda$  involved in the bootstrap procedure is obtained by smoothing the periodogram operator  using  the Bartlett-Priestley spectral window with different values of the bandwidth $h$, i.e., the spectral window $ K_h(x)=3\big(1- x^2/(\pi^2h^2)\big)/(4\pi h)$ for $ |x|\leq \pi h$ and $ K_h(x)=0$ elsewhere. 
   To reduce computational costs, some simplifications in implementing the bootstrap procedure have also been made. In particular, 
   the subsampling parameter $b$ was set  in all cases equal to $32$.
 All bootstrap estimates  are based on $ B=300$ bootstrap repetitions while  $100$ trials have  been evaluated.  
  
   \begin{figure}[t]
\begin{center}  
\includegraphics[angle=0,height=5.5cm,width=5.4cm]{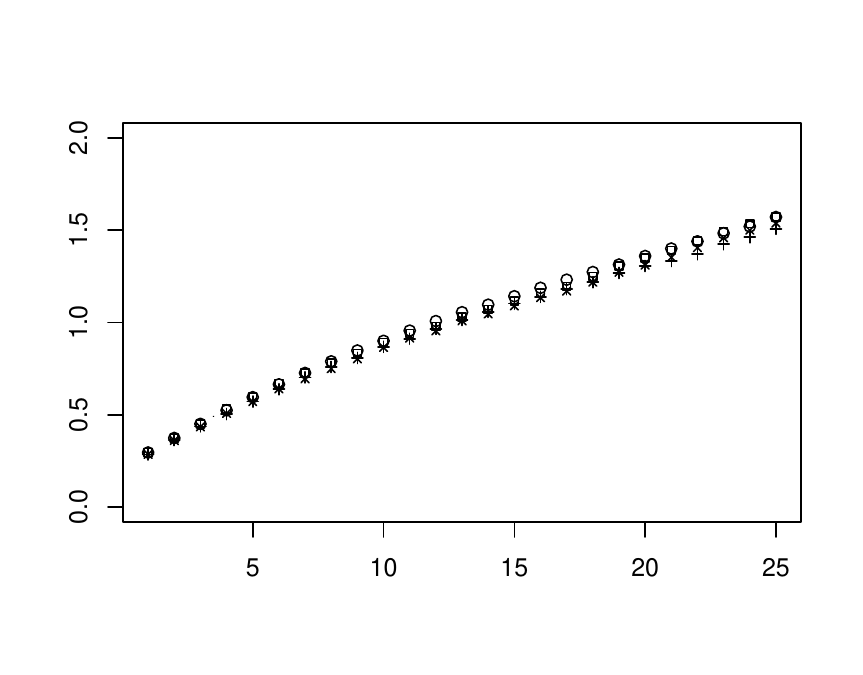},
\includegraphics[angle=0,height=5.5cm,width=5.4cm]{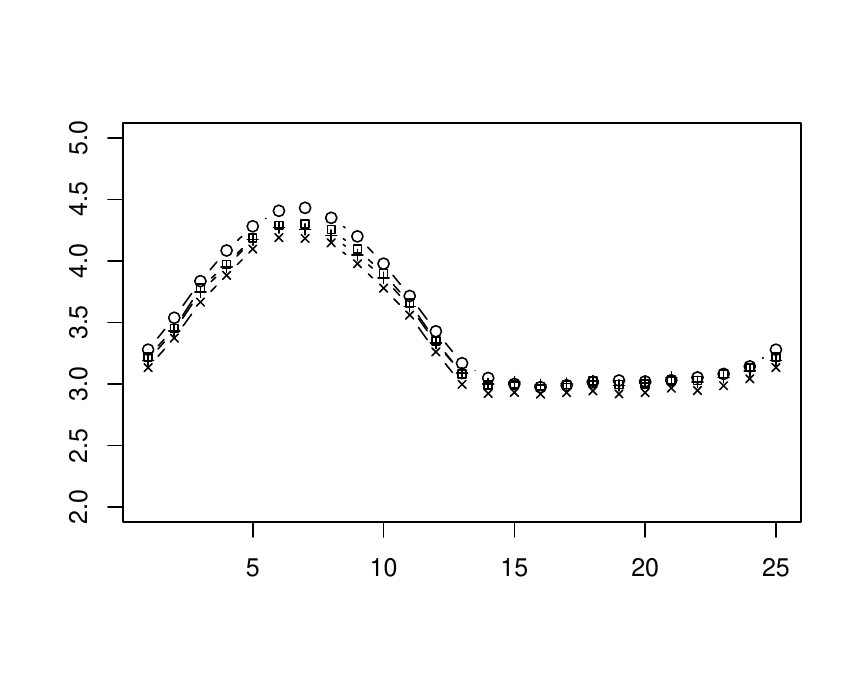}\\
\includegraphics[angle=0,height=5.5cm,width=5.4cm]{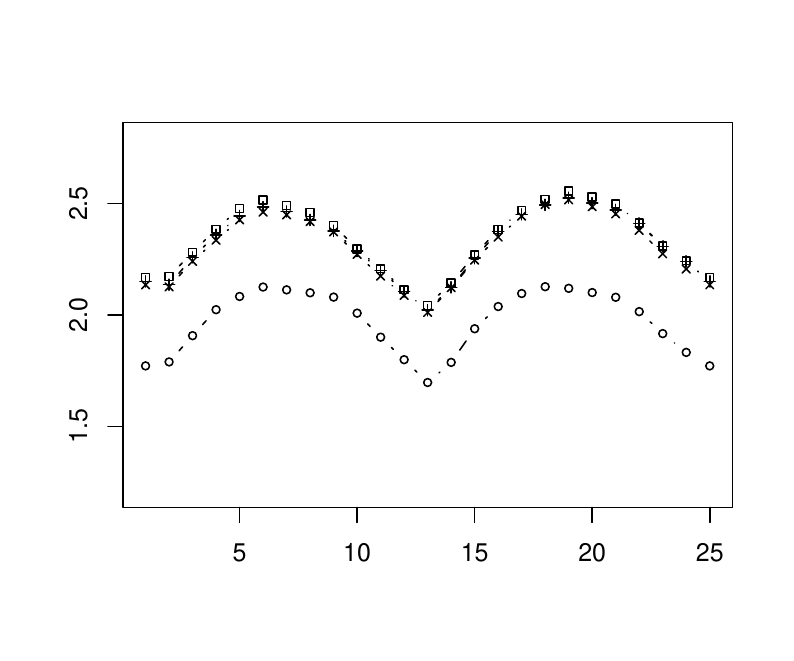},
\includegraphics[angle=0,height=5.5cm,width=5.4cm]{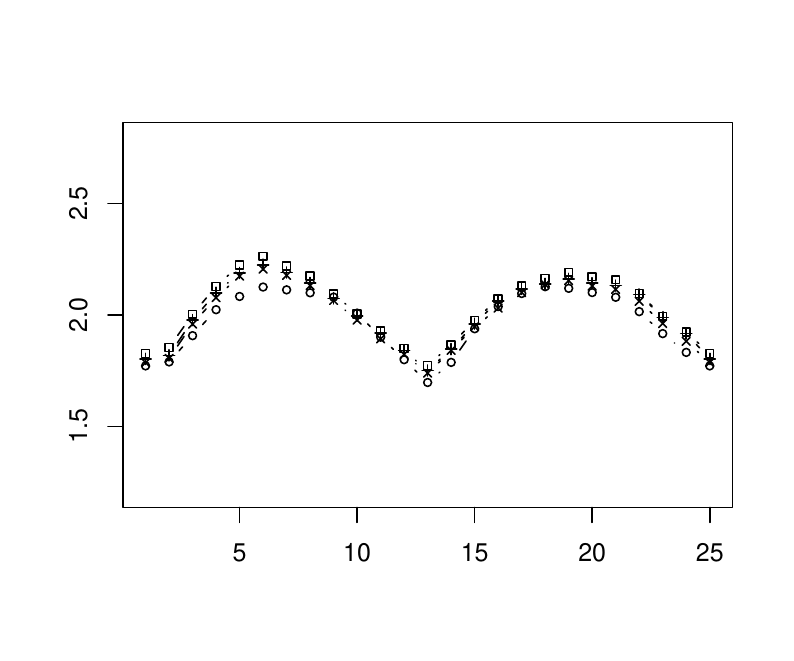}
\end{center}
\vspace*{-1.3cm}\caption{Estimated exact and (means)  of bootstrap   estimates  of $ Std(\tau_r)$ (vertical axes)  for  $ r=1,2, \ldots, 25$ (horizontal axes). Reading clockwise: First plot: Model I. The symbol "o"  refers to the estimated exact  values of $ Std(\tau_r)$, while  the  symbols  "$\square$", "$+$" and  "$ \times$",  refer to the means of three  bootstrap estimates based on  bootstrap proposal $ L_n^o$ and   bandwidths $h=0.08$, $h=0.10$ and $ h=0.14$, respectively. 
Second plot: Same as the first plot  but for Model II. Third plot:  Same as the first plot but for  Model III and for bandwidths $ h=0.14$, $h=0.20$ and $h=0.25$. Fourth plot:  As third plot  but for  bootstrap 
estimates based on the non corrected bootstrap  proposal  $ L^\star_n$.    \label{fig.ExaBoot1}
}
\end{figure}

\begin{scriptsize}
\begin{table}[t]
\begin{center}
\setlength{\tabcolsep}{2.6mm}
\renewcommand{\arraystretch}{1.2}
\begin{tabular}{|c|lllllllll|}
\hline
  &  &       &    &   &       &      &       &       &           \\
   &  \multicolumn{3}{l}{{\bf Model I}}      &   \multicolumn{3}{l}{{\bf Model II}}  &       \multicolumn{3}{l|}{{\bf Model III}}      \\
  &  &       &    &   &       &      &       &       &           \\
 & Exa       & BMean      & BStd      &  Exa       & BMean       & BStd    &  Exa     &  BMean      & BStd     \\
\hline
 &  &       &    &   &       &      &       &       &          \\
$\tau_1$   & 0.296 &  0.286     &   0.081  & 3.279 &3.134  &     0.747 &      1.771 &   1.826    &    0.310       \\
$\tau_2$     &  0.375 &    0.361   &  0.104  & 3.538 & 3.373 &    0.797   &   1.789   &    1.853   &     0.345             \\
$\tau_3$     &  0.451&      0.435   &  0.126 &3.836  &3.665 &   0.913  &   1.907  &   2.002    &           0.388      \\
$\tau_4$      &  0.524&       0.508 &  0.153 & 4.085  &3.883  &  0.954 &   2.024   &   2.128    &           0.409      \\
$\tau_5$          & 0.596 &     0.571  &  0.161  & 4.281 &4.096   &  1.022     &  2.083    &  2.224     &       0.432           \\
$\tau_6$    & 0.665 &      0.641 & 0.201 & 4.407 & 4.189 &    1.049   &    2.125 &    2.263   &         0.446         \\
$\tau_7$    & 0.727 &    0.695   & 0.215   & 4.431 &4.182  &   1.028    &   2.113   &   2.220    &        0.430          \\
$\tau_8$     &  0.789 &     0.750  &   0.228 &4.350  &4.147 &  1.010     &  2.099    &  2.175     &        0.428         \\   
$\tau_9$      &  0.848 &     0.803  &  0.237  & 4.199 & 3.978 &   0.924    &  2.080    &   2.095    &        0.403          \\  
$\tau_{10}$         & 0.900 & 0.865      & 0.252   & 3.978 & 3.779  & 0.883      & 2.008     & 2.005       & 0.387                  \\
$\tau_{11}$       & 0.956 &    0.916   &   0.270 & 3.715 & 3.560 &     0.867  &    1.900  &   1.929    &     0.371            \\  
$\tau_{12}$        &1.008  &    0.955   &   0.285 & 3.428 & 3.261 &   0.786    &    1.799  &   1.849    &          0.333       \\
$\tau_{13}$        & 1.056 &   1.009    &  0.309  & 3.169  & 2.998 &    0.715   &   1.697   &    1.773   &           0.316       \\
$\tau_{14}$       & 1.096 &   1.046    &  0.317  & 3.049  & 2.923 &    0.647   &   1.787   &    1.866   &             0.319     \\
$\tau_{15}$        & 1.142 &   1.089    &  0.319  &  3.001 & 2.932 &   0.638    &   1.938   &    1.975   &         0.387         \\
$\tau_{16}$          & 1.188 &   1.134    &   0.337 & 2.974 & 2.919  &   0.637    &    2.038  &   2.073    &     0.407            \\
$\tau_{17}$         & 1.232 &    1.170   &   0.353 & 2.990 & 2.930 &    0.639   &   2.096   &     2.129  &     0.402           \\
$\tau_{18}$     & 1.273 &   1.218    &  0.369  & 3.015 & 2.945 &    0.652   &   2.127   &     2.163  &           0.409       \\
$\tau_{19}$         & 1.313 &  1.271     &  0.406  & 3.029  & 2.929  &  0.626     &  2.119    &   2.191    &     0.408             \\
$\tau_{20}$       &  1.360&   1.311    &  0.422  &  3.021 & 2.930  &    0.592   &   2.100   &    2.172   &       0.419          \\
$\tau_{21}$        & 1.400 &    1.352   &0.427    &  3.030 &  2.966  &  0.591     &   2.079   &  2.158     &      0.407            \\
$\tau_{22}$     & 1.439 &    1.405   &  0.431  & 3.054 & 2.945 &    0.570   &    2.015  &      2.096 &      0.378            \\
$\tau_{23}$         & 1.483 &  1.458     & 0.440   & 3.082 & 2.986  &0.625       &   1.916   &   1.994    &    0.323              \\
$\tau_{24}$       & 1.520 &   1.501    &  0.440  & 3.143 & 3.043  &    0.654   &    1.832  &    1.924   &      0.274            \\
$\tau_{25}$        & 1.571 &    1.538   & 0.425   &  3.280 &3.134  &   0.747    &   1.771   &     1.826  &     0.310             \\
  &  &       &    &   &       &      &       &       &           \\
 \hline
\end{tabular}
\hspace*{0.3cm}
\vspace*{2mm}
\caption{Estimated exact (Exa) and bootstrap   estimates  of $ Std(\tau_r)$  for  $ r=1,2, \ldots, 25$. BMean and BStd,  refer to the means and to the standard deviations of the bootstrap estimates  based on bootstrap proposal $ L_n^o$, over  100 trials for the bandwidth $h=0.14$.    \label{tab.Table1}
}
\end{center}
\end{table}
\end{scriptsize}
  
   The results obtained are shown in Figure~\ref{fig.ExaBoot1}.  Note that   three of the four plots shown  in  Figure~\ref{fig.ExaBoot1}  refer to means of the bootstrap estimates over the 100 trials considered,  and each plot  corresponds to one of the   three different models used in the simulation experiment. The estimates shown in  these three plots  are    
    based on  the bootstrap proposal $L_n^o$ (see Step 7 of the algorithm) and  they are presented for 
    three    different values of the bandwidth $h$.
   As these exhibits show,  the bootstrap  estimates behave very well  for the    range of values of the bandwidth $ h$ considered. They are  quite close to the (estimated) exact values of the target  function $Std(\cdot)$ and  they closely follow  the behavior  of this function in the interval $[0,1]$.   A more detailed  presentation  of some  of the results obtained is given in Table~\ref{tab.Table1}. This table  shows  means and standard deviations of the bootstrap estimates over the 100 trials.  
   In order to quantify   the  importance  of the  contribution of the  procedure based on convolved periodogram operators of subsamples and the correction made in Step 6 of the bootstrap algorithm,   the fourth plot of Figure~\ref{fig.ExaBoot1} shows   (mean)  bootstrap estimates  which are  solely based on the  non-corrected bootstrap estimator  $L^\star_n$ (see Step 3 of the same algorithm). The improvements achieved by  using the convolved subsampling procedure to complement  the bootstrap estimates $L_n^\star$  for the missing fourth order terms,  are   clearly seen by  comparing  the two plots displayed in  the last row of Figure~\ref{fig.ExaBoot1}.\\

 \noindent {\bf Acknowledgements:} The authors thank Hanlin Shang for his help in the implementation of the R code used in the simulations of Section~\ref{sec.NumRes}\\
 
 \noindent {\bf Funding:}   This research was partly   funded  by  the Cyprus Academy of Sciences, Letters,   and  Arts, and   partly by the  Austrian Science Foundation (FWF), \\ $ \backslash {\rm ead}$\{https://doi.org/10.55776/P35520\}. For the purpose of open access, the authors have applied a CC BY public copyright licence to any Author Accepted Manuscript version arising from this submission.



\newpage

\section{Supplementary Material}

\subsection{Auxiliary lemmas}

\begin{Lem}\label{le.Mom}
 Let   $ X\in {\mathcal H}$ such that $ X\sim {\mathcal N}(0,\Gamma)$. Then for all $ p\in \N$ and $ \alpha>1$,
\[ \|\E \|X\|^{2p} \leq C_p\,  p! \, \|\Gamma\|_N^p, \]
with 
\[ C_p=\Big( -\frac{32\alpha (1+e)}{\log\big(e/(\alpha(1+e)-1)\big)}\Big)^p \Big(\sqrt{\frac{\alpha(1+e)-1}{e}}+ \frac{e^2}{e^2-1} \Big).\]
\end{Lem}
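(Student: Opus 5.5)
The plan is to reduce the claim to moment bounds for independent real Gaussian scores via the Karhunen–Loève expansion of $X$ with respect to $\Gamma$. Let $\{\gamma_r\}$ be the eigenvalues of the nuclear, self-adjoint, nonnegative operator $\Gamma$, with orthonormal eigenvectors $\{e_r\}$. Then $X=\sum_r \sqrt{\gamma_r}\,\eta_r e_r$ with $\eta_r$ i.i.d. standard Gaussian (in the complex circular case, standard complex Gaussian). Hence $\|X\|^2=\sum_r\gamma_r\eta_r^2$ and $\|\Gamma\|_N=\sum_r\gamma_r$.

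The route is through the moment generating function. For $0<t<1/(2\gamma_1)$,
\[
\E e^{t\|X\|^2}=\prod_r (1-2t\gamma_r)^{-1/2},
\]
and with $t=c/\|\Gamma\|_N$ for a fixed $c<1/2$ we have $2t\gamma_r\le 2c<1$. Using $-\log(1-x)\le x\,\kappa(c)$ for $x\le 2c$, where $\kappa(c)=-\log(1-2c)/(2c)$, we get
\[
\E e^{t\|X\|^2}\le \exp\big(c\,\kappa(c)\big)=:K(c),
\]
a constant independent of $\Gamma$. Since $x^p/p!\le t^{-p}e^{tx}$, it follows that
\[
\E\|X\|^{2p}\le p!\,t^{-p}K(c)=p!\,c^{-p}K(c)\,\|\Gamma\|_N^p .
\]
This gives the asserted form $C_p\,p!\,\|\Gamma\|_N^p$ with $C_p$ equal to a $p$-th power times a constant.

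The specific constant, with the free parameter $\alpha>1$ and the logarithm $\log\big(e/(\alpha(1+e)-1)\big)$, suggests the authors optimized differently. A likely route is a tail bound (Hanson–Wright or Laurent–Massart type) for $\|X\|^2$, followed by integration of $\P(\|X\|^2>s)$ against $ps^{p-1}$, with $\alpha$ arising from splitting the tail integral at a level proportional to $\alpha\|\Gamma\|_N$. Since only the structural form $C_p\,p!\,\|\Gamma\|_N^p$ is used later (for instance in the moment bounds behind Lemma~\ref{le.App-CLT-1}), I would prove the bound with my explicit constant $c^{-p}K(c)$. I would then check that it is dominated by the stated $C_p$ for every $\alpha>1$, by choosing $c$ as a suitable function of $\alpha$.

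The main obstacle is exactly matching the stated constant. I would first verify that the $p$-th power factor $32\alpha(1+e)/|\log(\cdot)|$ exceeds $1/c$ for some admissible $c<1/2$. The condition needed is $\alpha(1+e)-1>e$, which holds since $\alpha>1$, and this makes the logarithm negative. I would also verify that the additive factor $\sqrt{(\alpha(1+e)-1)/e}+e^2/(e^2-1)\ge 1$ dominates $K(c)$ for that choice. If it does not, I would fall back on the tail-integration route and track constants there.
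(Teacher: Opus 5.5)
Your proof is correct, but it takes a different route from the paper's.

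\textbf{The paper's route.} The paper never diagonalizes $\Gamma$. It invokes Fernique's theorem in the form of \cite{Bogachev1998}, Thm.~2.8.5: $\E e^{\lambda\|X\|^2}\le e^{16\lambda r^2}+e^2/(e^2-1)$ whenever $\log\big(P(\|X\|>r)/P(\|X\|\le r)\big)+32\lambda r^2\le -1$. It verifies this condition with Markov's inequality $P(\|X\|>r)\le\|\Gamma\|_N/r^2$ at radius $r^2=\alpha(1+e)\|\Gamma\|_N$. It then takes the largest admissible $\lambda=\log\big((\alpha(1+e)-1)/e\big)/(32r^2)$ and finishes with the same step $\E\|X\|^{2p}\le p!\,\lambda^{-p}\E e^{\lambda\|X\|^2}$ that you use. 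So $\alpha$ is only slack in the Markov radius, not a tail-splitting level as you guessed.

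\textbf{Your route.} The Karhunen--Lo\`eve expansion and the exact chi-square moment generating function replace the black-box Fernique constants. Your chord bound for $-\log(1-x)$ gives $\E e^{t\|X\|^2}\le(1-2c)^{-1/2}$ at $t=c/\|\Gamma\|_N$, hence $\E\|X\|^{2p}\le p!\,c^{-p}(1-2c)^{-1/2}\|\Gamma\|_N^p$. This is elementary, self-contained and far sharper. The Fernique argument has the advantage of not needing the spectral decomposition of $\Gamma$; it carries over to Gaussian measures on separable Banach spaces with $\|\Gamma\|_N$ replaced by $\E\|X\|^2$.

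\textbf{The domination check you left open.} It is immediate and needs no $\alpha$-dependent choice of $c$. Write $u=\alpha(1+e)-1>e$. The base of the $p$-th power in $C_p$ is then $32(u+1)/(\log u-1)$. Since $\log u\le u/e$, this base exceeds $32e(u+1)/u>32e>4$. The additive factor exceeds $1+e^2/(e^2-1)>2>\sqrt2$. Hence $c=1/4$ gives $4^p\sqrt2\,p!\,\|\Gamma\|_N^p\le C_p\,p!\,\|\Gamma\|_N^p$ for every $\alpha>1$, and the tail-integration fallback is unnecessary.

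A minor point: in the complex circular case you would need $|\eta_r|^2$ in place of $\eta_r^2$, and the MGF becomes $\prod_r(1-t\gamma_r)^{-1}$. This does not matter here, because the lemma is about real Gaussian elements, as in its application to ${\rm Re}(J^\star)$ and ${\rm Im}(J^\star)$.
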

\begin{proof}
By Fernique's theorem,  see  \cite{Bogachev1998} Theorem 2.8.5, we have
\[ \E\big[ e^{\lambda \|X\|^2} \big] = \sum_{h=0}^\infty \frac{\lambda^h}{h!} \E \|X\|^{2h}  \leq e^{16\lambda r^2} + \frac{e^2}{e^2 -1},\]
if $\lambda$, $r>0$ satisfy 
\begin{equation} \label{eq.One-1}
\log \Big( \frac{1-P(X\in\overline{B}(0,r))}{P(X\in\overline{B}(0,r)}\Big) + 32\lambda r^2 \leq -1.
\end{equation}
In particular,
\begin{equation} \label{eq.One-1.1}
\E\|X\|^{2p} \leq \frac{p!}{\lambda^p} \Big(e^{16\lambda r^2} + \frac{e^2}{e^2-1} \Big).
\end{equation} 
By Markov's inequality we have
\[ 1 - P(X\in \overline{B}(0,r)) = P(\|X\|>r) \leq \frac{1}{r^2} \E\|X\|^2 = \frac{1}{r^2} \|\Gamma\|_N\]
and therefore, 
\[\frac{1-P(X\in\overline{B}(0,r))}{P(X\in \overline{B}(0,r))} \leq \frac{\|\Gamma\|_N /r^2}{1-\|\Gamma\|_N/r^2}.\]
This in turn implies that (\ref{eq.One-1}) is satisfied  if
\begin{align*}
&\log\Big(  \frac{\|\Gamma\|_N /r^2}{1-\|\Gamma\|_N/r^2}\Big) + 32 \lambda r^2 \leq -1\\
\iff &  \frac{\|\Gamma\|_N /r^2}{1-\|\Gamma\|_N/r^2} \leq {\rm exp}(-1) {\rm exp}(-32\lambda r^2) \\
\iff & \log\Big(  \frac{\|\Gamma\|_N /r^2}{(1-\|\Gamma\|_N/r^2){\rm exp}(-1)}\Big) \leq -32 \lambda r^2\\
\iff & \frac{1}{32r^2} \log\Big(  \frac{\|\Gamma\|_N /r^2}{(1-\|\Gamma\|_N/r^2) {\rm exp}(-1)}\Big) \leq -\lambda <0.
\end{align*}
However, 
\begin{align} \label{eq.One-2}
 & \log\Big( \frac{\|\Gamma\|_N /r^2}{(1-\|\Gamma\|_N/r^2) {\rm exp}(-1)} \Big) <0\\
 \iff & \|\Gamma\|_N/r^2 < (1-\|\Gamma\|_N/r^2) \frac{1}{e} = \frac{1}{e} - \frac{\|\Gamma\|_N}{r^2 e}\nonumber \\
 \iff & \frac{\|\Gamma\|_N}{r^2}\Big( 1 + \frac{1}{e}\Big) < \frac{1}{e} \nonumber \\
 \iff &  \sqrt{\|\Gamma\|_N (1+e)} <r. \nonumber
\end{align}
Thus, choosing $ r = \sqrt{\alpha \|\Gamma\|_N (1+e)} $ for some $ \alpha >1$ solves (\ref{eq.One-2}) and yields
\begin{align*}
\lambda \geq -\frac{1}{32 \alpha \|\Gamma\|_N (1+e)} \log\Big( \frac{1/(\alpha(1+e))}{(1-1/(\alpha(1+e))1/e}\Big),
\end{align*}
where the expression in the $\log()$ term simplifies to  $ e/\big(\alpha(1+e)-1\big)$.
Inserting these choices of $r$ and $\lambda$ into (\ref{eq.One-1.1}) yields the assertion 
\[ 16\lambda r^2 = -\frac{1}{2} \ln\Big(\frac{e}{\alpha(1+e)-1}\Big) \ \ \mbox{that is} \ \  e^{16\lambda r^2} = \sqrt{\frac{\alpha(1+e)-1}{e}}\]
and 
\[ \frac{p!}{\lambda^p} = \Big( -\frac{32\alpha(1+e)}{\ln\big(e/(\alpha(1+e)-1)\big)}\Big).\]
\end{proof}
\begin{Rem} Note that from Lemma~\ref{le.Mom} one gets using $\big(\E\|X\|^{2p-1}\big)^{1/(2p-1)} \leq  \big(\E\|X\|^{2p}\big)^{1/2p} $, that  
\begin{equation} \label{eq.Mom-1}
 \E\|X\|^{2p-1} \leq \big( \E\|X\|^{2p} \big)^{(2p-1)/2p} \leq  \big(C_p\, p!\big)^{(2p-1)/2p}\|\Gamma\|_N^{p-1/2}.
 \end{equation}
\end{Rem}

\vspace*{0.2cm}

\begin{Lem} \label{le.App-2}
Let $ (X_n,n\in\N)$ be a sequence  in ${\mathcal H}$  with $ X_n\sim {\mathcal C}{\mathcal N}(\mu_n,
 C_n,R_n) $ and let $ X\sim {\mathcal C}{\mathcal N}(\mu, C, R)$. If 
 \[ \|\mu_n-\mu\| \rightarrow 0, \ \|C_n-C\|_N \rightarrow 0, \ \mbox{and} \  \|R_n -R\|_N \rightarrow 0, \]
 then 
$X_n \stackrel{D}{\rightarrow} X$. 
 \end{Lem}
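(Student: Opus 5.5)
The statement to prove is Lemma~\ref{le.App-2}: convergence of the parameters of complex Gaussian elements implies convergence in distribution. I would reduce it to a real Gaussian statement and use characteristic functionals together with tightness. Write $X_n = \mu_n + Y_n$ with $Y_n$ centered. Since $\|\mu_n-\mu\|\to 0$, Slutsky's lemma shows it suffices to treat the centered case, i.e.\ $\mu_n=\mu=0$.

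\textbf{Step 1: pass to real Gaussian vectors.} Consider the real pairs $\widetilde Y_n=({\rm Re}(Y_n),{\rm Im}(Y_n))^\top$ in $\HH_\R\oplus\HH_\R$. These are centered real Gaussian with covariance operator $\Sigma(C_n,R_n)$, and likewise $\widetilde Y$ has covariance $\Sigma(C,R)$, with $\Sigma$ as in (\ref{eq.Sigma-Matrix}). The entries of $\Sigma$ are real and imaginary parts of $C\pm R$, so the nuclear norm satisfies $\|\Sigma(C_n,R_n)-\Sigma(C,R)\|_N\le c(\|C_n-C\|_N+\|R_n-R\|_N)\to0$. Because $({\rm Re},{\rm Im})$ is a homeomorphism, it suffices to show $\widetilde Y_n\to\widetilde Y$ in distribution. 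This reduces the lemma to: centered real Gaussians with trace-norm convergent covariances converge weakly.

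\textbf{Step 2: finite-dimensional distributions.} For any $h$ in the real Hilbert space, $\E e^{i\langle \widetilde Y_n,h\rangle}=\exp(-\tfrac12\langle \Sigma_n h,h\rangle)$. This converges to $\exp(-\tfrac12\langle\Sigma h,h\rangle)$ because operator-norm convergence is implied by nuclear-norm convergence. Hence all linear functionals, and by Cramér--Wold all finite-dimensional projections, converge.

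\textbf{Step 3: tightness.} This is the main point. I would use the criterion for Hilbert-space tightness: $\sup_n \E\|\widetilde Y_n\|^2<\infty$ together with
\[
\lim_{N\to\infty}\sup_n \E\|\PP^\perp_N \widetilde Y_n\|^2=0
\]
for an ONB $(e_k)$. Here $\E\|\PP_N^\perp\widetilde Y_n\|^2={\rm tr}(\PP_N^\perp\Sigma_n\PP_N^\perp)\le {\rm tr}(\PP_N^\perp\Sigma\PP_N^\perp)+\|\Sigma_n-\Sigma\|_N$. Given $\varepsilon>0$, choose $n_0$ such that the second term is below $\varepsilon$ for all $n\ge n_0$. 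Then choose $N$ large enough that the first term is below $\varepsilon$, and also that the finitely many tails ${\rm tr}(\PP_N^\perp\Sigma_n\PP_N^\perp)$ for $n<n_0$ are below $\varepsilon$; this is possible since each $\Sigma_n$ is trace class. Combined with Step 2, Prokhorov's theorem gives the convergence, because limits are identified by their finite-dimensional laws.
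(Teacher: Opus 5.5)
Your proof is correct. The paper gives no argument of its own for this lemma: it only refers to Lemma~D.10 in the supplement of \cite{RademacherEtAl2024}. Your proposal is therefore a self-contained proof along the standard route:
\begin{itemize}
\item reduce to the centered case;
\item pass to the real Gaussian pair $({\rm Re}(Y_n),{\rm Im}(Y_n))$;
\item use convergence of characteristic functionals to get the finite-dimensional laws;
\item get tightness from uniformly small tail traces, and conclude with Prokhorov.
\end{itemize}
The citation keeps the paper short and stays inside the companion paper's framework. Your version shows exactly where the hypothesis enters. Nuclear-norm convergence is used only to obtain $\lim_{N\to\infty}\sup_n \mathrm{tr}(\PP_N^\perp\Sigma_n\PP_N^\perp)=0$, and this is precisely what can fail if the covariances converge only in Hilbert--Schmidt or operator norm.

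If you want a shorter alternative, use a coupling. Take $\widetilde Y_n=\sum_k \xi_k\Sigma_n^{1/2}e_k$ and $\widetilde Y=\sum_k\xi_k\Sigma^{1/2}e_k$ with the same i.i.d.\ standard normal $\xi_k$. The Powers--St{\o}rmer inequality $\|\Sigma_n^{1/2}-\Sigma^{1/2}\|_{HS}^2\le\|\Sigma_n-\Sigma\|_N$ then gives $\E\|\widetilde Y_n-\widetilde Y\|^2\to0$ directly.

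One detail in Step~1 should be tightened. You identify the covariance of the real pair with $\Sigma(C_n,R_n)$ from (\ref{eq.Sigma-Matrix}). With the paper's literal convention ${\rm Re}(L)=(L+L^\ast)/2$, where $L^\ast$ is the adjoint, this identification is false once $\dim\HH\ge 2$. It does hold with ${\rm Re}(L)=(L+\overline{L})/2$ and ${\rm Im}(L)=(L-\overline{L})/(2{\rm i})$.

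Your conclusion survives if you argue directly instead:
\begin{itemize}
\item write ${\rm Re}(Y)=(Y+\overline{Y})/2$ and ${\rm Im}(Y)=(Y-\overline{Y})/(2{\rm i})$;
\item note $\E(\overline{Y}\otimes\overline{Y})=\overline{C}$ and $\E(\overline{Y}\otimes Y)=\overline{R}$;
\item then each block of the real covariance is a linear combination of $C,\overline{C},R,\overline{R}$ with coefficients of modulus $1/4$.
\end{itemize}
Since $\|\overline{L}\|_N=\|L\|_N$, the nuclear-norm convergence you need follows.
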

\begin{proof}
The result is proven as  Lemma D.10 in  the Supplementary Material of  
\cite{RademacherEtAl2024}.
\end{proof}

\vspace*{0.2cm}
\begin{Lem} \label{le.OperAlg}
Let $A,B,C,D,E,F \in {\mathcal L}(\HH)$. Then
\begin{enumerate}
\item[(i)] \  $\big(A\otimes_{op} E(C)\big)\otimes_{op}\big(B\otimes_{op}F(D)\big) = (A\otimes_{op}B)(C\otimes_{op}D)(E\otimes_{op} F)^\ast$.
\item[(ii)] \  \  $\big(A\otimes_{op} \overline{F}(C)\big)\otimes_{op}^\top \big(\overline{B}\otimes_{op}E(D)\big) = (A\otimes_{op}B)(C\otimes_{op}^\top D)(E\otimes_{op} F)^\ast$.
\end{enumerate}
Let $f,g,u,v\in{\mathcal H}$. Then
\begin{enumerate}
\item[(iii)] \  $(f\otimes g)\otimes_{op}(u\otimes v) =(f\otimes u)\otimes(g\otimes v)$.
\item[(iv)] \  $(f\otimes g) \otimes_{op}^\top (u\otimes v) = (f\otimes \overline{u})\otimes( v \otimes \overline{g})$.
\end{enumerate}
 \end{Lem}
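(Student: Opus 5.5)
The plan is to verify all four identities by evaluating both sides on a generic Hilbert–Schmidt operator, using only the definitions and a few elementary rules:
\begin{itemize}
\item $L_1\otimes_{op}L_2(L_3)=L_1L_3L_2^\ast$ and $L_1\otimes^\top_{op}L_2(L_3)=L_1L_3^\top L_2^\top$;
\item $f\otimes g(u)=\langle u,g\rangle f$;
\item the Hilbert–Schmidt inner product $\langle X,Y\rangle_{HS}=\trace(XY^\ast)$.
\end{itemize}
For (iii) and (iv) both sides are bounded linear maps on $HS(\HH)$, and finite-rank operators are dense there. So it suffices to check the identities on rank-one operators $x\otimes y$, $x,y\in\HH$, and then extend by linearity and continuity.

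First I record three elementary facts.
\begin{itemize}
\item $(x\otimes y)(v\otimes u)=\langle v,y\rangle\, x\otimes u$.
\item $(x\otimes y)^\ast=y\otimes x$.
\item $\overline{x\otimes y}=\overline{x}\otimes\overline{y}$. Indeed, $\overline{\langle \overline z,y\rangle x}=\langle z,\overline y\rangle\overline x$.
\end{itemize}
From the last two, $(x\otimes y)^\top=\overline{y}\otimes\overline{x}$.

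For (iii), the left side applied to $x\otimes y$ is $(f\otimes g)(x\otimes y)(v\otimes u)$. Using the first fact twice, this equals $\langle x,g\rangle\langle v,y\rangle\, f\otimes u$. The right side gives $\langle x\otimes y,g\otimes v\rangle_{HS}\,(f\otimes u)=\langle x,g\rangle\overline{\langle y,v\rangle}\,(f\otimes u)$, which is the same.

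For (iv), the left side is $(f\otimes g)(\overline y\otimes\overline x)(u\otimes v)^\top=(f\otimes g)(\overline y\otimes\overline x)(\overline v\otimes\overline u)$. This collapses to $\langle\overline y,g\rangle\langle\overline v,\overline x\rangle\, f\otimes\overline u$. The right side is $\langle x\otimes y, v\otimes\overline g\rangle_{HS}\,(f\otimes\overline u)=\langle x,v\rangle\langle g,y\rangle\,(f\otimes\overline u)$. Since $\langle\overline v,\overline x\rangle=\langle x,v\rangle$ and $\langle \overline y,g\rangle=\langle \overline g,y\rangle$, I will match these conjugation conventions carefully. This bookkeeping of complex conjugates and inner-product slots is the step most prone to sign or conjugation slips, and I will redo it on coordinates if needed.

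For (i) and (ii), I evaluate both sides at an arbitrary $X\in HS(\HH)$. The key preliminary step is identifying the HS-adjoint of $E\otimes_{op}F$. By cyclicity of the trace,
\[
\langle EXF^\ast,Y\rangle_{HS}=\trace(EXF^\ast Y^\ast)=\trace\big(X(E^\ast YF)^\ast\big),
\]
so $(E\otimes_{op}F)^\ast=E^\ast\otimes_{op}F^\ast$, i.e.\ $Y\mapsto E^\ast YF$.

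For (i), the left side is $(ACE^\ast)X(BDF^\ast)^\ast=ACE^\ast XFD^\ast B^\ast$. The right side is $A\,C(E^\ast XF)D^\ast\,B^\ast$, which is identical.

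For (ii), the left side is $(AC\overline F^\ast)X^\top(\overline BDE^\ast)^\top$. I will use the rules $(LM)^\top=M^\top L^\top$, $(\overline{L})^\top=L^\ast$ and $(L^\ast)^\top=\overline L$. These give $(\overline BDE^\ast)^\top=\overline E D^\top B^\ast$. Also $\overline F^\ast=F^\top$, so $F^\top X^\top \overline E=(E^\ast XF)^\top$. Hence the left side equals $AC(E^\ast XF)^\top D^\top B^\ast$, which is exactly $(A\otimes_{op}B)(C\otimes^\top_{op}D)(E^\ast XF)$, the right side.

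The only real obstacle is the careful verification of the transpose and conjugate rules in this complexified setting. I would establish them first as a short sublemma directly from $\overline L(x)=\overline{L(\overline x)}$.
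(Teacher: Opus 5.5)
Your proof is correct, and it takes a different route from the paper, which gives no argument of its own. The paper cites Remarks D.26 and D.31 (for (i)--(ii)) and Lemmas D.30 and D.25 (for (iii)--(iv)) of the supplement to \cite{RademacherEtAl2024}. You instead verify everything directly from the definitions, and you correctly isolate the two ingredients that matter:
\begin{itemize}
\item the Hilbert--Schmidt adjoint $(E\otimes_{op}F)^\ast=E^\ast\otimes_{op}F^\ast$, obtained by cyclicity of the trace;
\item the transpose calculus $(LM)^\top=M^\top L^\top$, $(\overline{L})^\top=L^\ast$, $(L^\ast)^\top=\overline{L}$.
\end{itemize}
The citation is shorter. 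Your computation has the advantage of making the conventions explicit: the inner product is linear in the first slot, $\langle X,Y\rangle_{HS}=\trace(XY^\ast)$, and $\langle\overline{a},\overline{b}\rangle=\overline{\langle a,b\rangle}$ in the complexification. Parts (ii) and (iv) are sensitive to exactly these conventions.

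Two small points for your sublemma:
\begin{itemize}
\item State the identity $\langle\overline{a},\overline{b}\rangle=\overline{\langle a,b\rangle}$ explicitly. Both your third elementary fact $\overline{x\otimes y}=\overline{x}\otimes\overline{y}$ and the identities $\langle\overline{y},g\rangle=\langle\overline{g},y\rangle$, $\langle\overline{v},\overline{x}\rangle=\langle x,v\rangle$ rest on it.
\item Note that $X\mapsto X^\top$ is linear, being the composition of the two antilinear maps $X\mapsto\overline{X}$ and $X\mapsto X^\ast$. This is what makes both sides of (iv) bounded linear maps on $HS(\HH)$ and justifies extending by density from rank-one operators.
\end{itemize}

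There is one slip in (iv). By the rule you used in (iii), $\langle x\otimes y,a\otimes b\rangle_{HS}=\langle x,a\rangle\langle b,y\rangle$. So the right side of (iv) at $x\otimes y$ is $\langle x,v\rangle\langle\overline{g},y\rangle\,(f\otimes\overline{u})$, not $\langle x,v\rangle\langle g,y\rangle\,(f\otimes\overline{u})$. With this correction, the two identities you list show that it coincides exactly with your left side $\langle\overline{y},g\rangle\langle\overline{v},\overline{x}\rangle\,(f\otimes\overline{u})$.
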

\begin{proof}
For (i) and (ii), see  Remark D. 26 and D. 31  and for (iii) and (iv) see Lemma  D.30 and Lemma D.25,  in the Supplement File of  
\cite{RademacherEtAl2024}.
\end{proof}

\vspace*{0.2cm} 

The following lemma deals with the convergence properties of the inverse and the square root operators  for a particular class of Hilbert-Schmidt operators.

\begin{Lem}\label{le.inverse}
 Let  $ S \in HS({\mathcal H}) $  and $ (S_n,n\in\N ) $ be a sequence of  random, nonnegative definite   operators in $ HS({\mathcal H})$,   such that $ |\sigma(S_n) | =|\sigma(S)| =m$, $m\in \N$,  and $ 0\notin \sigma(S_n) \cup \sigma(S)$.
 If  $ \|S_n-S\|_{HS} \stackrel{P}{\rightarrow} 0$,
 as $ n \rightarrow \infty$, then,
\begin{enumerate}
\item[(i)]  \  $\|S_n^{1/2}-S^{1/2}\|_{HS} \stackrel{P}{\rightarrow} 0$.
\item[(ii)] \ $\|S_n^{-1}-S^{-1}\|_{HS} \stackrel{P}{\rightarrow} 0$ and
\item[(iii)] \ $\|S_n^{-1/2} -S^{-1/2}\|_{HS} \stackrel{P}{\rightarrow} 0$.
\end{enumerate}
\end{Lem}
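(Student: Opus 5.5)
The plan is to reduce everything to finite-dimensional matrix analysis. First I will show that $S_n$ and $S$ effectively live on a common finite-dimensional subspace. Then I will apply continuity of the maps $A\mapsto A^{1/2}$, $A\mapsto A^{-1}$ and $A\mapsto A^{-1/2}$ on the cone of positive definite operators.

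Since $S$ is nonnegative definite (as a limit in probability of such operators) and compact, with exactly $m$ nonzero eigenvalues $\mu_1\ge\dots\ge\mu_m>0$, write $S=\sum_{r=1}^m \mu_r e_r\otimes e_r$. Likewise $S_n=\sum_{r=1}^m\mu_{r,n} e_{r,n}\otimes e_{r,n}$ with $\mu_{r,n}>0$. The inverses and inverse square roots are understood on the respective ranges (pseudo-inverses), i.e. $S^{-1}=\sum_r\mu_r^{-1}e_r\otimes e_r$.

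\textbf{Step 1: eigenvalue control.} By Weyl's inequality for compact self-adjoint operators, $|\mu_{r,n}-\mu_r|\le\|S_n-S\|_{\mathcal L}\le\|S_n-S\|_{HS}$. Hence $\mu_{r,n}\to\mu_r$ in probability for each $r\le m$, and on an event of probability tending to one we have $\min_r\mu_{r,n}\ge\mu_m/2=:\delta>0$.

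\textbf{Step 2: a holomorphic functional calculus via contour integrals.} Let $\Gamma$ be a contour in the right half-plane enclosing $[\delta,\|S\|_{\mathcal L}+1]$ and excluding $0$. On the event from Step 1, for any $f$ holomorphic near the enclosed region,
\[
f(S_n)-f(S)=\frac{1}{2\pi i}\oint_\Gamma f(z)\big[(z-S_n)^{-1}-(z-S)^{-1}\big]\,dz ,
\]
where $f(S)$ is understood on the range. Taking $f(z)=z^{1/2}$, $z^{-1}$ and $z^{-1/2}$ (principal branch) covers (i), (ii) and (iii). The resolvent identity gives
\[
(z-S_n)^{-1}-(z-S)^{-1}=(z-S_n)^{-1}(S_n-S)(z-S)^{-1}.
\]
The factor $(S_n-S)$ is Hilbert–Schmidt, so
\[
\|f(S_n)-f(S)\|_{HS}\le C_\Gamma \sup_{z\in\Gamma}\|(z-S_n)^{-1}\|_{\mathcal L}\,\|(z-S)^{-1}\|_{\mathcal L}\,\|S_n-S\|_{HS}.
\]
The resolvent norms are bounded by $1/{\rm dist}(z,\sigma(\cdot))$, which stays bounded away from zero on $\Gamma$ by Step 1.

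\textbf{Main obstacle: the zero eigenvalue.} The difficulty is that $0$ belongs to the spectrum of both $S_n$ and $S$ in infinite dimensions, while $f$ is not holomorphic at $0$. So the contour must enclose only the nonzero eigenvalues, and what the integral produces is $f$ applied on the range, with $0$ assigned on the kernel.

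This is harmless for $z^{1/2}$. For $z^{-1}$ and $z^{-1/2}$, the contour integral equals $\sum_r f(\mu_{r,n})e_{r,n}\otimes e_{r,n}$, which is exactly the pseudo-inverse interpretation. The difference is then still controlled by the resolvent bound, because $\Gamma$ stays a fixed distance from both $0$ and the nonzero spectrum.

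To make this rigorous I will separately check that the spectral projections $\Pi_n=\frac{1}{2\pi i}\oint_\Gamma(z-S_n)^{-1}dz$ converge in HS-norm to $\Pi$, which is the same argument with $f\equiv1$. Convergence in probability then follows from the deterministic Lipschitz-type bound on the high-probability event.
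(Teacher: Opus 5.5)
Your proof is correct, but it takes a different route from the paper's.

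\textbf{The paper's route.} The paper proves the three claims separately.
\begin{itemize}
\item For (i) it applies Simon's Theorem 2.21: HS convergence follows from weak convergence plus convergence of HS norms. The norms come from $\|S_n^{1/2}\|_{HS}^2=\|S_n\|_N=\sum_{j\le m}\sigma_j(S_n)$. Weak convergence comes from $\|S_n^{1/2}-S^{1/2}\|_{\mathcal L}\le\|S_n-S\|_{\mathcal L}^{1/2}$.
\item For (ii) it uses the factorization $S_n^{-1}-S^{-1}=S_n^{-1}(S-S_n)S^{-1}$ together with $\|S_n^{-1}\|_{\mathcal L}=\mathcal O_P(1)$.
\item (iii) is left to ``the same lines''.
\end{itemize}

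\textbf{Your route.} You replace all of this with one Riesz--Dunford estimate. Weyl's inequality and the equal-rank hypothesis confine the nonzero spectra of $S_n$ and $S$ to $[\mu_m/2,\|S\|_{\mathcal L}+1]$ with probability tending to one. The resolvent identity then gives, on that event,
$\|f(S_n)-f(S)\|_{HS}\le C(\mu_m,\|S\|_{\mathcal L})\,\|S_n-S\|_{HS}$,
for $f(z)=z^{1/2},z^{-1},z^{-1/2}$ at once.

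\textbf{What your route buys.}
\begin{itemize}
\item An explicit Lipschitz rate, and (iii) handled on the same footing as (i) and (ii).
\item More importantly, a correct treatment of the inverses as pseudo-inverses on the ranges. This reading is forced, because $0$ lies in the full spectrum of a finite-rank operator on an infinite-dimensional space.
\end{itemize}
Read with pseudo-inverses, the factorization in the paper's proof of (ii) is valid only when ${\rm ran}(S_n)={\rm ran}(S)$. In general $S_n^{-1}(S-S_n)S^{-1}=S_n^{-1}\Pi-\Pi_nS^{-1}$, where $\Pi,\Pi_n$ are the range projections, and this is not $S_n^{-1}-S^{-1}$. The ranges really do differ in the paper's application: $S^\star_{1,m}$ is built with $\widehat{\PP}_m$, while $\Sigma(\Psi_{1,m},\Upsilon_{1,m})$ is built with $\PP_m$. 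Your contour argument avoids the issue, because for $z\in\Gamma$ both $(z-S_n)^{-1}$ and $(z-S)^{-1}$ are genuine inverses on the whole space.

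\textbf{What the paper's route buys.} Its square-root inequality needs no gap between $0$ and the nonzero spectrum.

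\textbf{A minor point.} Your planned separate check that $\Pi_n\to\Pi$ in HS-norm is unnecessary. It is just the case $f\equiv 1$ of the bound you already have, and the bounds for the other choices of $f$ do not rely on it.
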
 
\begin{proof}
Consider (i). Since $ \sqrt \cdot \in C([0,+\infty))$, by Theorem 2.21 of \cite{Simons2005} it  suffices to show that $\|S^{1/2}_n\|_{HS} \stackrel{P}{\rightarrow} \|S^{1/2}\|_{HS}$ and  that $ \big|\langle S_n^{1/2}(u),u\rangle - \langle S^{1/2}(u),u\rangle\big| \stackrel{P}{\rightarrow} 0$ for every $u\in{\mathcal H}$. 
We have  using the notation    $ \sigma_j(S_n)$ and $\sigma_j (S) $ the $j$-th  eigenvalue of $S_n $ and $S $, respectively,  that
\begin{align*}
 \|S^{1/2}_n\|^2_{HS}  = \|S_n\|_N = \sum_{j=1}^m \sigma_j(S_n) \stackrel{P}{\rightarrow} \sum_{j=1}^m \sigma_j(S) = \|S\|_N = \|S^{1/2}\|_{HS}^2 .  
\end{align*}
Furthermore,  employing functional calculus, 
\begin{align*}
\big|\langle S_n^{1/2}(u),u\rangle - \langle S^{1/2}(u),u\rangle\big| & \leq \|S_n^{1/2} -S^{1/2}\|_{\mathcal L} \|u\|^2 \leq \|S_n-S\|_{\mathcal L}^{1/2} \|u\|^2 \stackrel{P}{\rightarrow} 0.
\end{align*}
For  (ii)  we have  by the assumption that $ S_n$ is bounded from bellow in probability,  that $S$ is bounded from bellow, that $ \|S_n^{-1}\|_{\mathcal L} = {\mathcal O}_P(1)$ and   that $ \|S^{-1}\|_{\mathcal L} = {\mathcal O}(1)$. Then,  
\begin{align*}
\|S_n^{-1}-S_n^{-1}\|_{HS}& = \|S_n^{-1}(S_n-S)S^{-1} \|_{HS}\\
& \leq \|S_n^{-1}\|_{\mathcal L} \|S_n - S\|_{HS} \|S^{-1}\|_{\mathcal L}\\
& = {\mathcal O}_P( \|S_n - S\|_{HS}) \rightarrow 0. 
\end{align*}
Finally (iii) is proved along the same lines as  (i)  by using (ii).
\end{proof}

The following lemma  quantifies   the error made when  applying  the estimated projection operator $\widehat{\PP}_m$ instead of $ \PP_m$ in the bootstrap procedure.

\begin{Lem} \label{le.App-3}
Let $ m \in \N$  and    $ \sigma_1>\sigma_2 > \cdots \sigma_m >0$.  Denote by $ \widetilde{T}^\star_{n,m}$ and $ \widetilde{Q}_{n,m}^\star$  the same bootstrap statistics as $  T^\star_{n,m}$ and $ Q_{n,m}^\star$
 obtained using the projection operators $ \PP_m=\sum_{j=1}^m v_j \otimes v_j$, respectively, ${\rm  Id}-\PP_m$,  instead of  $ \widehat{\PP}_m=\sum_{j=1}^m \widehat v_j \otimes  \widehat v_j$, respectively, $ {\rm Id}-\widehat{\PP}_m$. Then,
\begin{enumerate}
\item[(i)] \  $  \|  T^\star_{n,m} -  \widetilde{T}^\star_{n,m}\|_{HS} =  {\mathcal O}_{P}\Big(n^{-1/2}\sqrt{\sum_{j=1}^m \alpha_j^{-2}}\Big)$,
\item[(ii)] \  $  \| Q^\star_{n,m} - \widetilde{Q}^\star_{n,m} \|_{HS} =    {\mathcal O}_{P}\Big(n^{-1/2}\sqrt{\sum_{j=1}^m \alpha_j^{-2}}\Big)$,
\item[(iii)] \  $  \| {\rm Cov}^\star( T^\star_{n,m} ) - {\rm Cov}^\star( \widetilde{T}^\star_{n,m})\|_{HS} =    {\mathcal O}_{P}\Big(n^{-1/2}\sqrt{\sum_{j=1}^m \alpha_j^{-2}}\Big)$, \\  \hspace*{0cm} 
$  \|{\rm Rel}^\star( T^\star_{n,m} ) - {\rm Rel}^\star( \widetilde{T}^\star_{n,m})\|_{HS} =   {\mathcal O}_{P}\Big(n^{-1/2}\sqrt{\sum_{j=1}^m \alpha_j^{-2}}\Big)$,
\item[(iv)] \  $ \| {\rm Cov}^\star( Q^\star_{n,m} ) - {\rm Cov}^\star( \widetilde{Q}^\star_{n,m})\|_{HS}=  {\mathcal O}_{P}\Big(n^{-1/2}\sqrt{\sum_{j=1}^m \alpha_j^{-2}}\Big)$,  \\  \hspace*{0cm} 
$ \| {\rm Rel}^\star( Q^\star_{n,m} ) - {\rm Rel}^\star( \widetilde{Q}^\star_{n,m}) \|_{HS} =   {\mathcal O}_{P}\Big(n^{-1/2}\sqrt{\sum_{j=1}^m \alpha_j^{-2}}\Big)$,
\end{enumerate}
where $\alpha_1=\sigma_1-\sigma_2$ and $ \alpha_j=\min\{\sigma_{j-1}-\sigma_j, \sigma_j-\sigma_{j+1}\}$ {\rm for}  $ 2\leq j \leq m$,
 \end{Lem}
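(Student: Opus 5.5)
The argument has two ingredients. The first is a perturbation bound for the projection, $\|\widehat{\PP}_m-\PP_m\|_{HS}=\mathcal O_P\big(n^{-1/2}\sqrt{\sum_{j=1}^m\alpha_j^{-2}}\big)$. The second is the observation that all four differences in the lemma are bilinear in the projection error, multiplied by quantities that are $\mathcal O_P(1)$ under the bootstrap law.

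For the projection bound, I write $\widehat{\PP}_m-\PP_m=\sum_{j=1}^m(\widehat v_j\otimes\widehat v_j-v_j\otimes v_j)$, with $\widehat v_j$ aligned in sign with $v_j$. Since $\|\widehat v_j\otimes\widehat v_j-v_j\otimes v_j\|_{HS}\le 2\|\widehat v_j-v_j\|$, it suffices to control the eigenvector errors. Here I use the standard perturbation bound $\|\widehat v_j-v_j\|\le 2\sqrt2\,\alpha_j^{-1}\|\widehat\Gamma_0-\Gamma_0\|_{\mathcal L}$. To get the square-root-of-sum form rather than a plain sum, I sharpen this with the first-order expansion
\[
\widehat v_j-v_j\approx\sum_{k\ne j}(\sigma_j-\sigma_k)^{-1}\langle(\widehat\Gamma_0-\Gamma_0)v_j,v_k\rangle v_k .
\]
Squaring and summing over $j$, and using $\E\|\sqrt n(\widehat\Gamma_0-\Gamma_0)\|_{HS}^2=\mathcal O(1)$ (which follows from Assumption~\ref{as.1}(i),(ii)), gives $\sum_{j\le m}\E\|\widehat v_j-v_j\|^2\lesssim n^{-1}\sum_{j\le m}\alpha_j^{-2}$ up to higher-order terms. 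This step is the main obstacle. The naive triangle inequality only yields $n^{-1/2}\sum_j\alpha_j^{-1}$, so I must organize the expansion so that the residual terms are also of order $n^{-1}\sum_j\alpha_j^{-2}$, controlling them on the event $\|\widehat\Gamma_0-\Gamma_0\|_{\mathcal L}\le \alpha_m/2$, whose probability tends to one.

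For (i), I write
\[
T^\star_{n,m}-\widetilde T^\star_{n,m}=(\widehat{\PP}_m-\PP_m)L_n^\star\widehat{\PP}_m+\PP_mL_n^\star(\widehat{\PP}_m-\PP_m).
\]
This gives $\|T^\star_{n,m}-\widetilde T^\star_{n,m}\|_{HS}\le 2\|L_n^\star\|_{\mathcal L}\|\widehat{\PP}_m-\PP_m\|_{HS}$, where I use $\|\widehat{\PP}_m\|_{\mathcal L}=\|\PP_m\|_{\mathcal L}=1$. It remains to show $\|L_n^\star\|_{HS}=\mathcal O_P(1)$. This holds because
\[
\E^\star\|L_n^\star\|_{HS}^2=\|{\rm Cov}^\star(L_n^\star)\|_N\to\|\Psi_1\|_N
\]
by Lemma~\ref{le.App-CLT-1}(i); alternatively, it follows directly from the Gaussian moment bound of Lemma~\ref{le.Mom} together with Assumption~\ref{as.F}(ii). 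Part (ii) is immediate from (i), since $Q^\star_{n,m}-\widetilde Q^\star_{n,m}=-(T^\star_{n,m}-\widetilde T^\star_{n,m})$.

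For (iii) and (iv), note that the projections are functions of the data only, so they are fixed under $\E^\star$. I use the identity
\[
{\rm Cov}^\star(A)-{\rm Cov}^\star(B)=\E^\star[(A-B)\otimes A]+\E^\star[B\otimes(A-B)],
\]
and the analogous identity with $\overline{\,\cdot\,}$ for ${\rm Rel}^\star$. Cauchy--Schwarz then bounds the Hilbert--Schmidt norm by $(\E^\star\|A-B\|^2_{HS})^{1/2}\big((\E^\star\|A\|^2_{HS})^{1/2}+(\E^\star\|B\|^2_{HS})^{1/2}\big)$. From the bound in (i), $\E^\star\|A-B\|_{HS}^2\le 4\|\widehat{\PP}_m-\PP_m\|_{HS}^2\,\E^\star\|L_n^\star\|_{HS}^2$. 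The second moments of $A$ and $B$ are bounded by $\E^\star\|L_n^\star\|_{HS}^2=\mathcal O_P(1)$, because projections do not increase the Hilbert--Schmidt norm, and for $Q$ the complementary part is bounded in the same way. Combining these with the rate for $\|\widehat{\PP}_m-\PP_m\|_{HS}$ gives the claimed order in all four statements.
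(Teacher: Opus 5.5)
Your overall architecture matches the paper's. You bound the projection error and multiply by $\|L_n^\star\|_{HS}=\mathcal O_P(1)$, which follows from $\E^\star\|L_n^\star\|_{HS}^2=\|{\rm Cov}^\star(L_n^\star)\|_N$ and Lemma~\ref{le.App-CLT-1}. You get (ii) from $Q^\star_{n,m}-\widetilde Q^\star_{n,m}=\widetilde T^\star_{n,m}-T^\star_{n,m}$. You treat (iii)--(iv) by expanding second moments and applying Cauchy--Schwarz; your two-term identity is a tidier version of the paper's four-term expansion, and the contraction $\|L-\widehat{\PP}_mL\widehat{\PP}_m\|_{HS}\le\|L\|_{HS}$ handles $Q$. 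Those parts are fine.

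The gap is in the projection bound, and it is not where you locate it. Your reduction uses the per-term estimate $\|\widehat v_j\otimes\widehat v_j-v_j\otimes v_j\|_{HS}\le 2\|\widehat v_j-v_j\|$. Combining these by the triangle inequality gives only $2\sum_{j\le m}\|\widehat v_j-v_j\|$. That is the rate $n^{-1/2}\sum_{j\le m}\alpha_j^{-1}$, or $\sqrt m$ times your $\ell^2$ quantity, however sharply each eigenvector error is controlled.

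So the first-order expansion with residual control is aimed at the wrong target. Let $\widehat c_j=\mathrm{sign}\langle\widehat v_j,v_j\rangle$. The standard bound $\|\widehat v_j-\widehat c_jv_j\|\le 2\sqrt2\,\alpha_j^{-1}\|\widehat\Gamma_0-\Gamma_0\|_{\mathcal L}$, squared and summed, already gives $\sum_{j\le m}\|\widehat v_j-\widehat c_jv_j\|^2\le 8\|\widehat\Gamma_0-\Gamma_0\|_{\mathcal L}^2\sum_{j\le m}\alpha_j^{-2}$ deterministically.

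What is missing is a way to combine the eigenvector errors in $\ell^2$. Write
\[ \widehat{\PP}_m-\PP_m=\sum_{j\le m}(\widehat v_j-\widehat c_jv_j)\otimes\widehat v_j+\sum_{j\le m}\widehat c_jv_j\otimes(\widehat v_j-\widehat c_jv_j). \]
Then use that $\|\sum_j a_j\otimes e_j\|_{HS}^2=\sum_j\|a_j\|^2$ for any orthonormal family $(e_j)$. Apply this to $\{\widehat v_j\}$ in the first sum and, after taking adjoints, to $\{\widehat c_jv_j\}$ in the second. This gives $\|\widehat{\PP}_m-\PP_m\|_{HS}\le 4\sqrt2\,\|\widehat\Gamma_0-\Gamma_0\|_{\mathcal L}\big(\sum_{j\le m}\alpha_j^{-2}\big)^{1/2}$ uniformly in $m$, which is the claimed rate.

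For fixed $m$ the lost factor is a harmless constant. However, the lemma is applied with $m\to\infty$ under Assumption~\ref{as.4}, whose condition is calibrated to exactly this square-root form. The paper's own proof also moves from $2\sum_j\|\widehat v_j-v_j\|$ to the square-root form with only a citation, so it contains the same $\ell^1$-to-$\ell^2$ jump. The decomposition above repairs both.
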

\begin{proof} 
(i). Recall  that $ \|\widehat{\PP}_m\|_{\mathcal L}=  \| \PP_m\|_{\mathcal L} = 1$. Furthermore, 
\begin{align} \label{P-diff}
\|\widehat{\PP}_m-\PP_m\|_{\mathcal L} & \leq \|\sum_{j=1}^m  (\widehat v_j-v_j)\otimes  v_j \|_{\mathcal L} + \|\sum_{j=1}^m \widehat v_j\otimes (\widehat{v}_j-v_j) \|_{\mathcal L}\nonumber \\
& \leq  2\sum_{j=1}^m \|\widehat{v}_j -v_j\| \nonumber \\
 & \leq C\frac{1}{\sqrt{n}} \sqrt{\sum_{j=1}^m \frac{1}{\alpha^2_j}},
\end{align}
for some  $C>0$, where  for the last inequality see   \cite{HoermannKokoszka2010}, p. 1857. Since  $  T^\star_{n,m} = \widehat{\PP}_mL_n^\star \widehat{\PP}_m$, where  
\[  L_n^\star=\frac{2\pi}{ \sqrt{n}} \sum_{j\in {\mathcal G}(n) } W(\lambda_{j,n}) \big( P^\star_{n,\lambda_{j,n}}-\widehat{\F}_{\lambda_{j,n}} \big), \ \ \mbox{and} \ \    P^\star_{n,\lambda_{j,n}}=J^\star_n(\lambda_{j,n}) \otimes J^\star_n(\lambda_{j,n}),\]
we immediately  get using (\ref{P-diff}),  $\|L_n^\star \|_{HS}={\mathcal O}_P(1) $, see Lemma~\ref{le.App-CLT-1},    and 
\[  \|(\widehat{\PP}_m-\PP_m) L_n^\star \widehat{\PP}_m \|_{HS} \leq  \|\widehat{\PP}_m-\PP_m \|_{\mathcal L} \|L_n^\star \|_{HS},  \]
that 
\begin{align} \label{eq.Tstar-tilde}
\|T_{n,m}^\star-\widetilde{T}_{n,m}^\star\|_{HS}  & \leq 2 \|\widehat{\PP}_m- \PP_m\|_{\mathcal L} \|L_n^\star\|_{HS} = {\mathcal O}_P\Big( n^{-1/2}\sqrt{\sum_{j=1}^m \alpha_j^{-2}}\Big). 
\end{align}
Assertion (ii) follows by the same arguments and because
 \[  Q^\star_{n,m} = L_n^\star -T^\star_{n,m} =   \widetilde{Q}_{n,m}^\star + (\widetilde{T}_{n,m}^\star - T_{n,m}^\star).\]
For (iii) and (iv) we only show $  {\rm Cov}( T^\star_{n,m} ) = {\rm Cov}( \widetilde{T}^\star_{n,m}) + {\mathcal O}_{P}\Big(n^{-1/2}\sqrt{\sum_{j=1}^m \alpha_j^{-2}}\Big)$ with respect to $\|\cdot\|_{HS}$, since the proofs of the other  statements follow  along the same lines. We have  using 
expression (\ref{eq.Tstar-tilde}) that  
\begin{align*}
{\rm Cov}^\star(T_{n,m}^\star) & = \E^\star T^\star_{n,m} \otimes T^\star_{n,m} \\
& = {\rm Cov}^\star(\widetilde{T}_{n,m}^\star)  + \E^\ast (T^\star_{n,m}- \widetilde{T}_{n,m}^\star) \otimes \widetilde{T}_{n,m}^\star \\
& \ \  \ \ + \E^\ast \widetilde{T}_{n,m}^\star \otimes (T^\star_{n,m}- \widetilde{T}_{n,m}^\star)  + \E^\star  (T^\star_{n,m}- \widetilde{T}_{n,m}^\star) \otimes (T^\star_{n,m}- \widetilde{T}_{n,m}^\star) \\
& =  {\rm Cov}^\star(\widetilde{T}_{n,m}^\star)  + {\mathcal O}_P\Big(n^{-1/2}\sqrt{\sum_{j=1}^m \alpha_j^{-2}}\Big) 
\end{align*}
since 
\begin{align*}
 \|\E^\ast (T^\star_{n,m}- \widetilde{T}_{n,m}^\star) \otimes \widetilde{T}_{n,m}^\star\|_{HS}  & \leq \| \widehat{\PP}_m-\PP_m\|_{HS}  \E^\star \|L_n^\star\|_{HS}^2\|\PP_m\|_{HS}\\
 & = {\mathcal O}_P\Big(n^{-1/2}\sqrt{\sum_{j=1}^m \alpha_j^{-2}}\Big),
 \end{align*}
 and similarly for $ \|\E^\ast (T^\star_{n,m}- \widetilde{T}_{n,m}^\star) \otimes T_{n,m}^\star\|_{HS}  $.
\end{proof}


The following two lemmas are used  in  the   proof of  Lemma~\ref{Lem:CBPConsistencyCovariance}.
 
\begin{Lem}\label{Lem:AsymptoticCovarianceRiemannApprox}
Granted Assumptions \ref{as.1}, \ref{as.W}  and \ref{as.3} are satisfied and let 
\begin{align*}
	W_{t,b} := \frac{2\pi}{\sqrt{b}} \sum_{j\in\G(b)} W(\lambda_{j,b}) (P_{\lambda_{j,b}}^{t,b} - \widetilde{F}_{\lambda_{j,b}}).
\end{align*}
Then
\begin{align*}
	\lim_{b\to\infty} \|\Cov(W_{1,b}) - \Psi\|_{N} = 0 \qquad\text{and}\qquad \lim_{b\to\infty} \| {\rm Rel}(W_{1,b}) - \Upsilon \|_{N} = 0. 
\end{align*}
\end{Lem}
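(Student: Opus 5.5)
The plan is to reduce the statement to the known finite-sample covariance structure of spectral mean operators computed from a stretch of length $b$. By stationarity, the subsample $X_1,\ldots,X_b$ has the same law as any stretch of length $b$, so $W_{1,b}$ is distributed exactly like $\frac{2\pi}{\sqrt b}\sum_{j\in\G(b)} W(\lambda_{j,b})(P_{b,\lambda_{j,b}}-\widetilde F_{\lambda_{j,b}})$. Since $\widetilde F_{\lambda_{j,b}}$ is random, the first step is to replace it by the deterministic centering $\E P_{b,\lambda_{j,b}}$ and show the difference is negligible. Write $\widetilde F_{\lambda_{j,b}}-\E P_{b,\lambda_{j,b}}=N_b^{-1}\sum_t (P^{(t)}_{b,\lambda_{j,b}}-\E P^{(t)}_{b,\lambda_{j,b}})$. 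Using the cumulant summability in Assumption~\ref{as.1}(ii),(iii), its second moment in $HS$ is of order $b/N_b$ per frequency. After weighting by $2\pi/\sqrt b$ and summing over the $2B$ frequencies, the contribution to $W_{1,b}$ has second moment $O(b^2/n)\to 0$ by Assumption~\ref{as.3}(ii). Via Cauchy--Schwarz, its cross-covariance with the main term then vanishes as well.

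The second step handles the deterministically centered statistic. Expand $\Cov$ and ${\rm Rel}$ of $\frac{2\pi}{\sqrt b}\sum_j W(\lambda_{j,b})(P_{b,\lambda_{j,b}}-\E P_{b,\lambda_{j,b}})$ as double sums over $j,k\in\G(b)$ of $\Cov(P_{b,\lambda_j},P_{b,\lambda_k})$. Then apply the exact cumulant expansion of the periodogram covariance from \cite{RademacherEtAl2024}, Theorem 4.4, together with Lemma~\ref{le.OperAlg}(iii),(iv). This splits each covariance into three pieces: a product-of-second-order term $\F_{\lambda_j}\otimes_{op}\F_{\lambda_k}\delta_{j,k}+\F_{\lambda_j}\otimes_{op}^\top\F_{-\lambda_k}\delta_{j,-k}$, a fourth-order term $\frac{2\pi}{b}\F_{\lambda_j,-\lambda_j,-\lambda_k}$, and remainders. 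Under Assumption~\ref{as.1}(i),(ii) the remainders are $O(b^{-1})$ in $\|\cdot\|_N$ for the diagonal terms and $O(b^{-2})$ off the diagonal, uniformly in $j,k$; the weights $(|h_1|+|h_2|+|h_3|)$ in (ii) are what give these rates. Multiplying by $4\pi^2/b$ and summing, the diagonal part becomes the Riemann sum $\frac{4\pi^2}{b}\sum_j |W(\lambda_j)|^2\F_{\lambda_j}\otimes_{op}\F_{\lambda_j}+\dots$ and converges to $\Psi_1$. The off-diagonal fourth-order part is $\frac{8\pi^3}{b^2}\sum_{j,k}W(\lambda_j)\overline W(\lambda_k)\F_{\lambda_j,-\lambda_j,-\lambda_k}$, which converges to $\Psi_2$. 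Both sums are $O(1)\cdot o(1)$ errors away from their limits.

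Convergence of these Riemann sums in nuclear norm uses two facts. First, $\lambda\mapsto\F_\lambda$ and $\F_{\cdot,\cdot,\cdot}$ are continuous in $\|\cdot\|_N$ (indeed Lipschitz, by Assumption~\ref{as.1}(ii) for the fourth-order one). Second, $W$ is of bounded variation (Assumption~\ref{as.W}), so the Riemann–Stieltjes error is bounded by the total variation times the modulus of continuity. For the products I use $\|A\otimes_{op}B\|_N\le\|A\|_N\|B\|_N$. The relation operator is handled identically, with $W(\lambda_k)$ replacing $\overline W(\lambda_k)$, which yields $\Upsilon_1+\Upsilon_2$.

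I expect the main obstacle to be the uniform control of the remainder terms in the periodogram covariance expansion in nuclear norm, in particular the off-diagonal $j\ne\pm k$ terms. There are $O(b^2)$ of them, so each must be $o(b^{-2})$ after accounting for the $1/b$ normalization, i.e.\ $o(b^{-1})$ beyond the leading $\frac{2\pi}{b}\F_{\lambda_j,-\lambda_j,-\lambda_k}$. My first attempt is the Fejér-kernel bound: express the fourth cumulant of the DFTs as $\frac{1}{b^2}\sum\Gamma_{h_1,h_2,h_3}\times(\text{phase sums})$, and replace the boundary-truncated sum by the full one at a cost of $\sum(|h_1|+|h_2|+|h_3|)\|\Gamma_{h_1,h_2,h_3}\|_N/b^2$. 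That cost is finite by Assumption~\ref{as.1}(ii). The second-order cross terms at $j\neq\pm k$ are bounded by $O(b^{-2})$ using $\sum|h|\|\Gamma_h\|_N<\infty$; if only (i) is available, I would instead argue via a dominated-convergence argument on the Fejér kernel.
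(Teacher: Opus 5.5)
Your proposal is correct and follows essentially the same route as the paper. The shared steps are:
\begin{itemize}
\item remove the random centering $\widetilde{\F}$ via $\sup_\lambda\|\Cov(\widetilde{\F}_\lambda)\|_N=\mathcal{O}(b/N)$ and Cauchy--Schwarz;
\item invoke Theorem~4.4 of \cite{RademacherEtAl2024} for the diagonal expansion with an $\mathcal{O}(b^{-1})$ remainder and the off-diagonal expansion $\frac{2\pi}{b}\F_{\lambda_j,-\lambda_j,-\lambda_k}+\mathcal{O}(b^{-2})$;
\item pass to the Riemann-sum limits using continuity of $\F$ and $\F_{\cdot,\cdot,\cdot}$ together with the bounded variation of $W$.
\end{itemize}
The one difference is that you bound the centering correction for the aggregated statistic rather than frequency pair by frequency pair, as the paper does. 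Your version needs only $b^2/n\to 0$ at that step, whereas the paper's pairwise $o(1/b)$ bound uses $b^3/n\to 0$.
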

\begin{proof}
	For the covariance operator of $W_{1,b}$ we have the following decomposition   
	\begin{align}\label{Eq:CovarianceRiemannApprox}
	\Cov(W_{1,b}) 
	=& \frac{(2\pi)^2}{b} \sum_{j,i \in \G(b)} W(\lambda_{j,b})  \overline{W(\lambda_{i,b})} 
	\Cov( P_{\lambda_{j,b}}^{1,b} - \widetilde{\F}_{\lambda_{j,b}}, P_{\lambda_{i,b}}^{1,b} - \widetilde{\F}_{\lambda_{i,b}} )\nonumber\\
	=& \frac{(2\pi)^2}{b} \sum_{j,i \in \G(b)} W(\lambda_{j,b})  \overline{W(\lambda_{i,b})} \Big\{ 
	\Cov( P_{\lambda_{j,b}}^{1,b} - \F_{\lambda_{j,b}} , P_{\lambda_{i,b}}^{1,b} - \F_{\lambda_{i,b}} )\nonumber\\
	&+ \Cov( F_{\lambda_{j,b}} - \widetilde{\F}_{\lambda_{j,b}} , P_{\lambda_{i,b}}^{1,b} - \F_{\lambda_{i,b}} )
	+ \Cov( P_{\lambda_{j,b}}^{1,b} - \widetilde{\F}_{\lambda_{j,b}} , \F_{\lambda_{i,b}} - \widetilde{\F}_{\lambda_{i,b}}  )\nonumber\\
	=&  \frac{(2\pi)^2}{b} \sum_{j,i \in \G(b)} W(\lambda_{j,b})  \overline{W(\lambda_{i,b})} 
	\left\{ \Cov (P_{\lambda_{j,b}}^{1,b}, P_{\lambda_{i,b}}^{1,b}) + A_{j,i} + B_{j,i} \right\}
	\end{align}
	The term $A_{j,i}$ is bounded by 
	\begin{align*}
	\|A_{j,i}\|_{N} \leq \sup_{\lambda, \omega\in[-\pi,\pi]} \sqrt{\| \Cov( \widetilde{\F}_{\lambda} ) \|_{N} \| \Cov(P_{\omega}^{1,b}) \|_{N}}. 
	\end{align*}
	and it follows from Theorem~4.4 in \cite{RademacherEtAl2024},  that $\sup_{\omega \in[-\pi,\pi]}\| \Cov(P_{\omega}^{1,b}) \|_{N} = \mathcal{O}(1)$.
	Furthermore the bounds for $G_1$,$G_2$ and $G_3$ in \eqref{Eq:PeriodogramMeanApproxOverFF} obtained in the proof of Lemma \ref{Lem:PeriodogramMeanAndCovarianceApproxOverFF} show that we have $\sup_{\lambda \in [-\pi,\pi]} \| \Cov(\widetilde{\F}_{\lambda}) \|_{N} = \mathcal{O}(b/N)$ and since
	by assumption $b^3/n \to 0$ we can conclude
	\begin{align*}
	\|A_{j,i}\|_{N} = \mathcal{O}(\sqrt{b/N}) = o(1/b).
	\end{align*}
	For the term $B_{j,i}$ we have
	\begin{align*}
	\|B_{j,i}\|_{N} \leq \sup_{\lambda, \omega\in[-\pi,\pi]} \sqrt{\| \Cov( \widetilde{\F}_{\lambda}  ) \|_{N} \| \Cov(P_{\omega}^{1,b}- \widetilde{\F}_{\omega}) \|_{N}}.
	\end{align*}
	Because 
	\begin{align*}
	\| \Cov(P_{\omega}^{1,b}- \widetilde{\F}_{\omega}) \|_{N} 
	\leq& \| \Cov( P_{\omega}^{1,b} ) \|_{N} + 2\| \Cov( P_{\omega}^{1,b},\widetilde{\F}_{\omega})  \|_{N} + \| \Cov(\widetilde{\F}_{\omega}) \|_{N}\\
	\leq& \| \Cov( P_{\omega}^{1,b} ) \|_{N} + 2 \sqrt{ \| \Cov( P_{\omega}^{1,b} ) \|_{N} \| \Cov(\widetilde{\F}_{\omega}) \|_{N}}\\ 
	&+ \| \Cov(\widetilde{\F}_{\omega}) \|_{N}\\
	=& \left( \sqrt{\| \Cov( P_{\omega}^{1,b} ) \|_{N(HS)}} + \sqrt{\| \Cov(\widetilde{\F}_{\omega}) \|_{N}} \right)^2
	= \mathcal{O}(1)
	\end{align*}
	we can also similarly conclude $\|B_{j,i}\|_{N} = o(1/b)$. Using $\sup_{\lambda\in[-\pi,\pi]} |W(\lambda)| <\infty$, $|\G(b)|=\mathcal{O}(b)$
	and the obtained results for $A_{j,i}$ and $B_{j,i}$, it is apparent from \eqref{Eq:CovarianceRiemannApprox} that
	\begin{align*}
	\Cov(W_{1,b})
	=& \frac{(2\pi)^2}{b} \sum_{j,i \in \G(b)} W(\lambda_{j,b})  \overline{W(\lambda_{i,b})} 
	\Cov (P_{\lambda_{j,b}}^{1,b}, P_{\lambda_{i,b}}^{1,b}) + o(1)\\
	=& \frac{(2\pi)^2}{b} \Bigg\{ \sum_{j \in \G(b)} W(\lambda_{j,b})  \overline{W(\lambda_{j,b})} \Cov (P_{\lambda_{j,b}}^{1,b}, P_{\lambda_{j,b}}^{1,b})  \\
	 & \ \ + W(\lambda_{j,b})  \overline{W(-\lambda_{j,b})} \Cov (P_{\lambda_{j,b}}^{1,b}, P_{-\lambda_{j,b}}^{1,b})\nonumber\\
	&\ \ + \sum_{j \in \G(b)} \sum_{i \in \G(b)\atop i\not= \pm j} W(\lambda_{j,b})  \overline{W(\lambda_{i,b})} \Cov (P_{\lambda_{j,b}}^{1,b}, P_{\lambda_{i,b}}^{1,b})
	\Bigg\} + o(1)\\
	=& R_1 + R_2 + R_3 + o(1) 
	\end{align*}
	with respect to $\| \cdot \|_{N}$. 
	Concerning $R_1$ and $R_2$ we can invoke Theorem~4.4 of 
	\cite{RademacherEtAl2024},  to see (with respect to $\| \cdot \|_{N}$)
	\begin{align*}
	\Cov (P_{\lambda_{j,b}}^{1,b}, P_{\lambda_{j,b}}^{1,b}) &= \F_{\lambda_{j,b}} \tsr_{op} \F_{\lambda_{j,b}} + \mathcal{O}(b^{-1}),\\
	\Cov (P_{\lambda_{j,b}}^{1,b}, P_{-\lambda_{j,b}}^{1,b}) &= \F_{\lambda_{j,b}} \tsr_{op}^T \F_{-\lambda_{j,b}} + \mathcal{O}(b^{-1}),
	\end{align*}
	where the $\mathcal{O}(b^{-1})$ terms are uniform in $j\in\G(b)$.  Since $|\G(b)| = b$ it follows 
	\begin{align}\label{Eq:RiemannSumR1}
	R_1 =  \frac{(2\pi)^2}{b}  \sum_{j \in \G(b)} W(\lambda_{j,b})  \overline{W(\lambda_{j,b})} \F_{\lambda_{j,b}} \tsr_{op} \F_{\lambda_{j,b}} 
	+ \mathcal{O}(b^{-1}).
	\end{align}
	Now continuity of $\lambda \mapsto \F_{\lambda} \in N(\HH)$ and 
	\begin{align*}
	\| \F_{\lambda}\tsr_{op} \F_{\lambda} - \F_{\omega}\tsr_{op} \F_{\omega} \|_{N}
	\leq ( \|\F_{\lambda}\|_N + \|\F_{\omega}\|_N )\|\F_{\lambda} - \F_{\omega}\|_N
	\end{align*}
	imply continuity of $\lambda \mapsto \F_{\lambda} \tsr_{op} \F_{\lambda} \in N(HS)$. Hence  and in view of Assumption~\ref{as.W}, 
	it follows 
	that the 
	Riemann sum in \eqref{Eq:RiemannSumR1} converges to the Bochner integral, 
	i.e. 
	\begin{align*}
	\lim_{b\to\infty} R_1 = 2\pi \int_{-\pi}^{\pi} W(\lambda)\overline{W(\lambda)} \F_{\lambda} \tsr_{op} \F_{\lambda} \,d\lambda.
	\end{align*} 
	By the same argument we have as well 
	\begin{align*}
	\lim_{b\to \infty}R_2 = 2\pi \int_{-\pi}^{\pi} W(\lambda)\overline{W(-\lambda)} \F_{\lambda} \tsr_{op} \F_{-\lambda} \,d\lambda.
	\end{align*}
	Concerning the last term $R_3$, again by Theorem~4.4 in \cite{RademacherEtAl2024},  we have 
	\begin{align*}
	\Cov (P_{\lambda_{j,b}}^{1,b}, P_{\lambda_{i,b}}^{1,b}) 
	= \frac{2\pi}{b} \F_{\lambda_{j,b},-\lambda_{j,b},-\lambda_{i,b}} + \mathcal{O}(b^{-2})
	\end{align*}
	where the $\mathcal{O}(b^{-2})$ term is uniform in $j,i\in\G(b)$. Therefore 
	\begin{align} \label{eq.Eq:RiemannSumR1-2}
	R_3 = \frac{(2\pi)^3}{b^2}\sum_{j \in \G(b)} \sum_{i \in \G(b)\atop i\not= \pm j} W(\lambda_{j,b})  \overline{W(\lambda_{i,b})}
	\F_{\lambda_{j,b},-\lambda_{j,b},-\lambda_{i,b}} + \mathcal{O}(b^{-2})
	\end{align}
	and the continuity of $(\lambda,\omega) \to \F_{\lambda,-\lambda,\omega} \in N(HS)$ (see Proposition~4.1 in 
	\cite{RademacherEtAl2024}) again yields
	\begin{align*}
	\lim_{b\to\infty} R_3=  2\pi \int_{-\pi}^{\pi} \int_{-\pi}^{\pi} W(\lambda)\overline{W(-\omega)} \F_{\lambda,-\lambda,-\omega} \,d\lambda \,d\omega.
	\end{align*}
	Combining these results shows $\lim_{b\to\infty} \|\Cov(W_{1,b}) - \Psi\|_{N} = 0$. For the relation  operator notice that 
	\begin{align*}
	{\rm Rel}( W_{1,b} ) & = \E[ W_{1,b} \tsr \overline{W_{1,b}} ] \\
	& =\frac{(2\pi)^2}{b} \sum_{j,i \in \G(b)} W(\lambda_{j,b})  W(\lambda_{i,b})
	\E[ (P_{\lambda_{j,b}}^{1,b} - \overline{\F}_{\lambda_{j,b}}) \tsr (P_{\lambda_{i,b}}^{1,b} - \overline{\F}_{\lambda_{i,b}})], 
	\end{align*}
	so repeating the above calculations also yields $\|{\rm Rel}( W_{1,b} ) - \Upsilon\|_N \rightarrow 0$.
\end{proof}

\begin{Lem}\label{Lem:PeriodogramMeanAndCovarianceApproxOverFF}
Granted Assumptions \ref{as.1} and \ref{as.3} are satisfied. Then
	\begin{itemize}
		\item[(i)] $ \displaystyle \sum_{j \in \G(b)} \| \widetilde{\F}_{\lambda_{j,b}} - \E P_{\lambda_{j,b}}^{1,b}  \|_{HS} = \mathcal{O}_{P}(\sqrt{b^3/N})$;
		\item[(ii)] $ \displaystyle \sum_{j,i \in \G(b)} \bigg\|  \frac{1}{N} \sum_{t=1}^N P_{\lambda_{j,b}}^{t,b}\tsr P_{\lambda_{i,b}}^{t,b} 
		- \E[ P_{\lambda_{j,b}}^{1,b} \tsr P_{\lambda_{i,b}}^{1,b} ] \bigg\|_{HS} = \mathcal{O}_{P}(\sqrt{b^5/N})$.
	\end{itemize}
\end{Lem}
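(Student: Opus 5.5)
Both parts rest on the same device. Each deviation is split into its mean part and a centered average, and the centered average is controlled in $L^2$ through the cumulant summability conditions of Assumption~\ref{as.1}. Here $N$ in the statement denotes the number $N_b=n-b+1$ of subsamples, and $\widetilde{\F}_{\lambda_{j,b}}=N^{-1}\sum_{t=1}^N P^{t,b}_{\lambda_{j,b}}$. By stationarity, $\E P^{t,b}_{\lambda_{j,b}}=\E P^{1,b}_{\lambda_{j,b}}$ for every $t$, so
\[
\widetilde{\F}_{\lambda_{j,b}}-\E P^{1,b}_{\lambda_{j,b}}=\frac1N\sum_{t=1}^N \big(P^{t,b}_{\lambda_{j,b}}-\E P^{t,b}_{\lambda_{j,b}}\big)
\]
is a centered average. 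By Markov's inequality and Cauchy--Schwarz, it then suffices for (i) to prove
\[
\sup_{j\in\G(b)}\E\big\|\widetilde{\F}_{\lambda_{j,b}}-\E P^{1,b}_{\lambda_{j,b}}\big\|_{HS}^2=\mathcal O(b/N).
\]
Summing the square roots of this bound over the $|\G(b)|=\mathcal O(b)$ frequencies gives $\mathcal O_P(b\sqrt{b/N})=\mathcal O_P(\sqrt{b^3/N})$, as required.

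The variance bound is the main step. Write $\E\|N^{-1}\sum_t Y_t\|_{HS}^2=N^{-2}\sum_{s,t}\E\langle Y_s,Y_t\rangle_{HS}$ with $Y_t=P^{t,b}_{\lambda}-\E P^{t,b}_\lambda$. Expand $P^{t,b}_\lambda=(2\pi b)^{-1}\sum_{r,r'=t}^{t+b-1}X_r\otimes X_{r'}e^{-i(r-r')\lambda}$. Each term $\E\langle Y_s,Y_t\rangle_{HS}$ becomes $(2\pi b)^{-2}$ times a fourfold sum of inner-product moments, and each such moment splits into a fourth order cumulant plus the two products of second order moments. The cumulant part: by Assumption~\ref{as.1}(ii) it is summable over three of the four lags, so it contributes at most $C b^{-2}\cdot b$ for each pair $(s,t)$, uniformly, hence $\mathcal O(b^{-1})$. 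The pairing parts have the form $\mathrm{tr}(\Gamma_{r-u}\Gamma_{r'-u'}^\ast)$-type terms. These are bounded via $\|\Gamma_h\|_{HS}\le\|\Gamma_h\|_N$ and Assumption~\ref{as.1}(i), and they are non-negligible only when the index windows $[s,s+b-1]$ and $[t,t+b-1]$ lie within distance governed by the covariance decay. This yields $|\E\langle Y_s,Y_t\rangle_{HS}|\le C\,\min\{1,\,b^{-1}\sum_{|h|\ge |s-t|-b}\|\Gamma_h\|_N\}$ plus the $\mathcal O(b^{-1})$ cumulant term. Summing over $s,t$ gives $\mathcal O(N b)$ for the near-diagonal block $|s-t|\le b$, and the far block is controlled by the cumulant and summability bounds. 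Dividing by $N^2$ gives $\mathcal O(b/N)$. I expect this bookkeeping, namely showing uniformly in $\lambda$ that far-apart windows contribute only $\mathcal O(N)$ in total rather than $\mathcal O(N^2 b^{-1})$, to be the main obstacle. It may require keeping the cumulant contribution in a form where the lag between windows also enters, that is, using the weighted summability $\sum(|h_1|+|h_2|+|h_3|)\|\Gamma_{h_1,h_2,h_3}\|_N<\infty$.

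For (ii), proceed identically with $Z_t=P^{t,b}_{\lambda_{j,b}}\otimes P^{t,b}_{\lambda_{i,b}}$, which is again stationary in $t$, so the difference is a centered average. Now $\E\|N^{-1}\sum_t(Z_t-\E Z_t)\|_{HS}^2$ involves eighth order moments of $X$, which requires $\E\|X_0\|^8<\infty$. After centering, these moments reduce to cumulants of the products $X_{h}\otimes X_{h'}-\Gamma_{h-h'}$. Assumption~\ref{as.1}(iii) is exactly the summability needed, and together with (i) and (ii) of the same assumption it handles the partially factorised terms. The same window-overlap argument then gives the bound $\mathcal O(b/N)$ per pair $(j,i)$, uniformly; each $Z_t$ carries a normalisation $b^{-2}$ against $b^4$ index tuples, so $\|Z_t\|$ is $\mathcal O_P(1)$. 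Summing the square roots over the $\mathcal O(b^2)$ pairs gives $\mathcal O_P(b^2\sqrt{b/N})=\mathcal O_P(\sqrt{b^5/N})$. Assumption~\ref{as.3} is not needed for the rates themselves; it only guarantees that they vanish where they are used, as in Lemma~\ref{Lem:AsymptoticCovarianceRiemannApprox}.
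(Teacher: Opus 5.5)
Your overall plan matches the paper's. You write the deviations as centered averages over subsample positions, use $\E\|\cdot\|_{HS}\le\|\Cov(\cdot)\|_N^{1/2}$, prove a per-frequency bound $\mathcal O(b/N)$, and then sum square roots over $\mathcal O(b)$ resp.\ $\mathcal O(b^2)$ frequencies. However, the step that carries the lemma is left open, and the bounds you state cannot close it.
\begin{itemize}
\item Your per-pair cumulant bound is $\mathcal O(b^{-1})$. Summed over all $N^2$ pairs $(s,t)$ and divided by $N^2$, it gives only $\E\|\widetilde{\mathcal F}_{\lambda_{j,b}}-\E P^{1,b}_{\lambda_{j,b}}\|_{HS}^2=\mathcal O(b^{-1})$, hence $\mathcal O_P(\sqrt b)$ in (i).
\item Your pairing bound carries an unjustified factor $b^{-1}$ and a tail sum $\sum_{|h|\ge|s-t|-b}\|\Gamma_h\|_N$. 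Summing that over $|s-t|>b$ to the required order needs $\sum_h|h|\,\|\Gamma_h\|_N<\infty$, which Assumption~\ref{as.1}(i) does not give.
\item You reduce part (ii) to ``the same window-overlap argument'', so it inherits the gap.
\end{itemize}

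The missing idea is to not bound pair by pair. Instead, for fixed $t$, sum over the second window's start $u\in\Z$ jointly with the in-window indices. No weighted summability is needed.
\begin{itemize}
\item Cumulant term: by stationarity, $\cum(X_{t+r_1-1},X_{t+s_1-1},X_{u+r_2-1},X_{u+s_2-1})$ depends only on the lags $t-u+r_1-s_2$, $t-u+s_1-s_2$, $r_2-s_2$. For fixed $(t,s_1,s_2)$ the map $(r_1,u,r_2)\mapsto(t-u+r_1-s_2,\,t-u+s_1-s_2,\,r_2-s_2)$ is injective. Hence $\sum_{t,u}\sum_{r_1,s_1,r_2,s_2}\|\cum(\cdots)\|_N\le Nb^2\sum_{h_1,h_2,h_3}\|\Gamma_{h_1,h_2,h_3}\|_N$.
\item Pairing term $\|\Gamma_{t-u+r_1-r_2}\|_N\|\Gamma_{t-u+s_1-s_2}\|_N$: sum over $s_2$ first, then over $u$. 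This gives $Nb^3(\sum_h\|\Gamma_h\|_N)^2$, and the other pairing is analogous.
\item Dividing by $(2\pi b)^2N^2$ yields $\|\Cov(\widetilde{\mathcal F}_{\lambda})\|_N=\mathcal O(1/N)+\mathcal O(b/N)$, uniformly in $\lambda$. This is exactly the paper's argument.
\end{itemize}
For (ii), the paper does not compute a full eighth-order variance. It splits the deviation into three pieces and uses the triangle inequality.
\begin{itemize}
\item Two linear terms of the form $\E[P^{1,b}_{\lambda_{j,b}}]\otimes(\cdot)$ are handled directly by (i), since $\sum_j\|\E P^{1,b}_{\lambda_{j,b}}\|_{HS}=\mathcal O(b)$.
\item A doubly centered term has a covariance made of two pairing terms and a fourth cumulant of centered periodograms. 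The pairing terms are products of the periodogram covariance bounds above, summed over $u$ in the same way. The cumulant is where Assumption~\ref{as.1}(iii) enters.
\end{itemize}
Your direct variance computation would additionally produce cross terms (third moments of centered periodograms), which the split-first approach avoids.
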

\begin{proof}
	(i) Similarly as in the proof of Theorem~4.4 in \cite{RademacherEtAl2024},   
	we see that for arbitrary frequencies $\lambda,\omega \in [-\pi,\pi]$
	the covariance operator of periodograms may be bounded by 
\begin{align}\label{Eq:BoundForCovarianceOfPeriodogram}
	&\| \Cov( P_{\lambda_{j,b}}^{t,b}, P_{\lambda_{j,b}}^{u,b} ) \|_{N} \nonumber \\
	\leq& \frac{1}{(2\pi b)^2} \sum_{r_1,s_1,r_2,s_2=1}^b \| \Cov( X_{t+r_1-1} \tsr X_{t+s_1-1},  X_{u+r_2-1} \tsr X_{u+s_2-1} ) \|_{N} \nonumber\\
	\leq& \frac{1}{(2\pi b)^2} \sum_{r_1,s_1,r_2,s_2=1}^b \| \cum(X_{t+r_1-1}, X_{t+s_1-1}, X_{u+r_2-1}, X_{u+s_2-1}  ) \|_{N} \nonumber\\
	&+ \| \Gamma_{|t-u+r_1-r_2|} \|_N\| \Gamma_{|t-u+s_1-s_2|} \|_N + \| \Gamma_{|t-u+r_1-s_2|} \|_N\| \Gamma_{|t-u+s_1-r_2|} \|_N.
\end{align}
Using this and $\E[\widetilde{\F}_{\lambda_{j,b}} ] = \E P_{\lambda_{j,b}}^{1,b}$, we obtain 
\begin{align}\label{Eq:PeriodogramMeanApproxOverFF}
	& \E\left[ \sum_{j \in \G(b)} \| \widetilde{\F}_{\lambda_{j,b}} - \E P_{\lambda_{j,b}}^{1,b}  \|_{HS} \right] \nonumber\\
	\leq& \sum_{j \in \G(b)} \left( \E \| \widetilde{\F}_{\lambda_{j,b}} - \E P_{\lambda_{j,b}}^{1,b}  \|_{HS}^2 \right)^{1/2}\nonumber\\
	=& \sum_{j \in \G(b)} \left( \| \Cov(\widetilde{\F}_{\lambda_{j,b}})  \|_{N} \right)^{1/2}\nonumber\\
	\leq&  \sum_{j \in \G(b)} \left( \frac{1}{N^2} \sum_{t,u=1}^N  \| \Cov( P_{\lambda_{j,b}}^{t,b}, P_{\lambda_{j,b}}^{u,b} )\|_{N} \right)^{1/2}\nonumber\\
	\leq& \sum_{j \in \G(b)} \Bigg( \frac{1}{4\pi b^2N^2} \sum_{t,u=1}^N \sum_{r_1,s_1,r_2,s_2=1}^b 
	\cum(X_{t+r_1-1}, X_{t+s_1-1}, X_{u+r_2-1}, X_{u+s_2-1}  ) \|_{N}\nonumber\\
	&+ \| \Gamma_{|t-u+r_1-r_2|} \|_N\| \Gamma_{|t-u+s_1-s_2|} \|_N + \| \Gamma_{|t-u+r_1-s_2|} \|_N\| 
	\Gamma_{|t-u+s_1-r_2|} \|_N \Bigg)^{1/2}\nonumber \\
	=:&  \sum_{j \in \G(b)} ( G_1 + G_2 + G_3 )^{1/2}.
	\end{align}
For $G_2$ note that by first summing over $s_2$ and then over $u$ that
	\begin{align*}
	& \sum_{t,u=1}^N \sum_{r_1,s_1,r_2,s_2=1}^b \| \Gamma_{|t-u+r_1-r_2|} \|_N\| \Gamma_{|t-u+s_1-s_2|} \|_N\\
	\leq& \sum_{t,u=1}^N \sum_{r_1,s_1,r_2=1}^b \| \Gamma_{|t-u+r_1-r_2|} \|_N \sum_{ s_2 \in\Z } \| \Gamma_{s_2} \|_N\\
	\leq& \sum_{t=1}^N \sum_{r_1,s_1,r_2=1}^b \sum_{ u \in\Z } \| \Gamma_{u} \|_N \sum_{ s_2 \in\Z } \| \Gamma_{s_2} \|_N\\
	=& Nb^3 \left( \sum_{h\in\Z} \| \Gamma_h \|_N \right)^2 
\end{align*} 
	and hence $G_2 = \mathcal{O}(b/N)$. The same argument also shows $G_3 = \mathcal{O}(b/N)$. Finally summing the $4$th order cumulant term 
	first over $r_2$, then over $r_1$ and then over $u$ we get 
\begin{align*}
	&\sum_{t,u=1}^N \sum_{r_1,s_1,r_2,s_2=1}^b \| \cum(X_{t+r_1-1}, X_{t+s_1-1}, X_{u+r_2-1}, X_{u+s_2-1}  ) \|_{N}\\
	=& \sum_{t,u=1}^N \sum_{r_1,s_1,r_2,s_2=1}^b \| \cum(X_{t-u+r_1-s_2}, X_{t-u+s_1-s_2}, X_{r_2-s_2}, X_{0}  ) \|_{N}\\
	\leq& \sum_{t,u=1}^N \sum_{r_1,s_1,s_2=1}^b \sum_{r_2\in\Z} \| \cum(X_{t-u+r_1-s_2}, X_{t-u+s_1-s_2}, X_{r_2}, X_{0}  ) \|_{N}\\
	\leq& \sum_{t,u=1}^N \sum_{s_1,s_2=1}^b \sum_{r_1\in\Z} \sum_{r_2\in\Z} \| \cum(X_{r_1}, X_{t-u+s_1-s_2}, X_{r_2}, X_{0}  ) \|_{N}\\
	\leq& \sum_{t=1}^N \sum_{s_1,s_2=1}^b \sum_{u \in\Z}  \sum_{r_1\in\Z} \sum_{r_2\in\Z} \| \cum(X_{r_1}, X_{u}, X_{r_2}, X_{0}  ) \|_{N}\\
	=& Nb^2 \sum_{t_1,t_2,t_3 \in \Z} \| \cum(X_{t_1}, X_{t_2}, X_{t_3}, X_{0}  ) \|_{N}
	\end{align*}
	and hence $G_3 \in \mathcal{O}(1/N)$. Since $|\G(b)| = b$ we can conclude from \eqref{Eq:PeriodogramMeanApproxOverFF}, that 
	\begin{align*}
	\E\left[ \sum_{j \in \G(b)} \| \widetilde{\F}_{\lambda_{j,b}} - \E P_{\lambda_{j,b}}^{1,b}  \|_{HS} \right]
	\leq b \mathcal{O}(\sqrt{b/n}) = \mathcal{O}(\sqrt{b^3/N})
	\end{align*} 
	and assertion (i) follows.\\
	(ii) First notice that due to 
	\begin{align*}
	&( P_{\lambda_{j,b}}^{t,b} - \E[P_{\lambda_{j,b}}^{t,b}] ) \tsr ( P_{\lambda_{i,b}}^{t,b} - \E[P_{\lambda_{i,b}}^{t,b}] )\\
	=& P_{\lambda_{j,b}}^{t,b} \tsr P_{\lambda_{i,b}}^{t,b} - P_{\lambda_{j,b}}^{t,b} \tsr \E[P_{\lambda_{i,b}}^{t,b}]
	- \E[P_{\lambda_{j,b}}^{t,b}] \tsr P_{\lambda_{i,b}}^{t,b} + \E[P_{\lambda_{j,b}}^{t,b}] \tsr \E[P_{\lambda_{i,b}}^{t,b}]
	\end{align*} 
	and 
	\begin{align*}
	&\E\left[ (P_{\lambda_{j,b}}^{t,b} - \E[P_{\lambda_{j,b}}^{t,b}]) \tsr ( P_{\lambda_{i,b}}^{t,b} - \E[P_{\lambda_{i,b}}^{t,b}]) \right]
	= \E[ P_{\lambda_{j,b}}^{t,b} \tsr P_{\lambda_{i,b}}^{t,b}  ] - \E[P_{\lambda_{j,b}}^{t,b}] \tsr \E[P_{\lambda_{i,b}}^{t,b}]
	\end{align*}
	we have the following decomposition 
	\begin{align*}
	&\frac{1}{N} \sum_{t=1}^N P_{\lambda_{j,b}}^{t,b}\tsr P_{\lambda_{i,b}}^{t,b} 
	- \E[ P_{\lambda_{j,b}}^{1,b} \tsr P_{\lambda_{i,b}}^{1,b} ]\\
	=& \frac{1}{N} \sum_{t=1}^N \E[P_{\lambda_{j,b}}^{t,b}]\tsr ( P_{\lambda_{i,b}}^{t,b} - \E[P_{\lambda_{i,b}}^{t,b}] )
	+ ( P_{\lambda_{j,b}}^{t,b} - \E[P_{\lambda_{j,b}}^{t,b}] )\tsr \E[P_{\lambda_{i,b}}^{t,b}]\\
	&+ \Big\{ ( P_{\lambda_{j,b}}^{t,b} - \E[P_{\lambda_{j,b}}^{t,b}] ) \tsr ( P_{\lambda_{i,b}}^{t,b} - \E[P_{\lambda_{i,b}}^{t,b}] ) 
	- \E\left[ (P_{\lambda_{j,b}}^{t,b} - \E[P_{\lambda_{j,b}}^{t,b}]) \tsr ( P_{\lambda_{i,b}}^{t,b} - \E[P_{\lambda_{i,b}}^{t,b}]) \right] \Big\}\\
	=& K_{j,i}^{(1)} + K_{j,i}^{(2)} + K_{j,i}^{(3)}.
	\end{align*}
	Summing the first term over the Fourier frequencies it follows from assertion (i), that 
	\begin{align*}
	\sum_{j,i \in \G(b)} \| K_{j,i}^{(1)} \|_{HS}
	=& \sum_{j,i \in \G(b)} \left\| \frac{1}{N} \sum_{t=1}^N \E[P_{\lambda_{j,b}}^{1,b}] 
	\tsr ( P_{\lambda_{i,b}}^{t,b} - \E[P_{\lambda_{i,b}}^{1,b}] )  \right\|_{HS}\\
	\leq& \sum_{j\in\G(b)}  \| \E[P_{\lambda_{j,b}}^{1,b}] \|_{HS} 
	\sum_{i\in\G(b)} \left\| \frac{1}{N} \sum_{t=1}^N P_{\lambda_{i,b}}^{t,b} - \E[P_{\lambda_{i,b}}^{1,b}] \right\|_{HS}\\
	=& \mathcal{O}(b)\cdot \mathcal{O}_P(\sqrt{b^3/N}) = \mathcal{O}_P(\sqrt{b^5/N}).
	\end{align*}
	By the same argument $\sum_{j,i \in \G(b)} \| K_{j,i}^{(2)} \|_{HS} = \mathcal{O}_P(\sqrt{b^5/N})$. The last term is
	centered, i.e. $\E[K_{j,i}^{(3)}] = 0$,
	and therefore 
	\begin{align*}
	\E\left[ \sum_{j,i\in\G(b)} \| K_{j,i}^{(3)} \|_{HS} \right] 
	\leq \sum_{j,i \in\G(b)} \sqrt{\| \Cov(K_{j,i}^{(3)}) \|_{N}}.
	\end{align*}
	Thus to finish the proof it remains to show $\| \Cov(K_{j,i}^{(3)}) \|_{N} = \mathcal{O}(b/N)$. To that end let 
	$Q_{\lambda_{j,b}}^{t,b} := P_{\lambda_{j,b}}^{t,b} - \E[P_{\lambda_{j,b}}^{1,b}]$. Then we have for the covariance the decomposition
	\begin{align*}
	\Cov(K_{j,i}^{(3)}) 
	=& \frac{1}{N^2} \sum_{t,u=1}^N \Cov( Q_{\lambda_{j,b}}^{t,b}\tsr Q_{\lambda_{i,b}}^{t,b}, Q_{\lambda_{j,b}}^{u,b}\tsr Q_{\lambda_{i,b}}^{u,b} )\\
	=& \frac{1}{N^2} \sum_{t,u=1}^N \E[ Q_{\lambda_{j,b}}^{t,b}\tsr Q_{\lambda_{j,b}}^{u,b}  ]\tsr_{op} \E[ Q_{\lambda_{i,b}}^{t,b}\tsr Q_{\lambda_{i,b}}^{u,b} ]\\
	&+ \E[ Q_{\lambda_{j,b}}^{t,b}\tsr Q_{\lambda_{i,b}}^{u,b}  ]\tsr_{op}^T \E[ Q_{\lambda_{i,b}}^{t,b}\tsr Q_{\lambda_{j,b}}^{u,b} ]\\
	&+ \cum( Q_{\lambda_{j,b}}^{t,b},Q_{\lambda_{i,b}}^{t,b},Q_{\lambda_{j,b}}^{u,b},Q_{\lambda_{i,b}}^{u,b} )\\
	=& S_{j,i}^{(1)} + S_{j,i}^{(2)} + S_{j,i}^{(3)}.
	\end{align*} 
	Using \eqref{Eq:BoundForCovarianceOfPeriodogram} we see that the first term is bounded by 
	\begin{align*}
	\| S_{j,i}^{(1)} \|_{N} 
	\leq& \frac{1}{N^2} \sum_{t,u=1}^N \| \Cov( P_{\lambda_{j,b}}^{t,b},  P_{\lambda_{j,b}}^{u,b} ) \|_{N}
	\| \Cov( P_{\lambda_{i,b}}^{t,b},  P_{\lambda_{i,b}}^{u,b} ) \|_{N}\\
	\leq& \frac{1}{16 \pi^4b^4N^2} \sum_{t,u=1}^N \Bigg( \sum_{r_1,s_1,r_2,s_2=1}^b \| \cum(X_{t-u+r_1-s_2}, X_{t-u+s_1-s_2}, X_{r_2-s_2}, X_{0}  ) \|_{N} \\
	&+ \| \Gamma_{|t-u+r_1-r_2|} \|_N\| \Gamma_{|t-u+s_1-s_2|} \|_N + \| \Gamma_{|t-u+r_1-s_2|} \|_N\| \Gamma_{|t-u+s_1-r_2|} \|_N \Bigg)^2.
	\end{align*}
	Rewriting the squared term as a double sum we see that three types of terms appear, which are of the following form
	\begin{align}
	&\frac{1}{16 \pi^4b^4N^2} \sum_{t,u=1}^N \sum_{r_1,s_1,r_2,s_2=1}^b \sum_{c_1,d_1,c_2,d_2=1}^b
	\| \Gamma_{|t-u+r_1-r_2|} \|_N\| \Gamma_{|t-u+s_1-s_2|} \|_N\nonumber\\
	&\qquad\qquad\qquad\qquad\qquad\qquad\qquad\qquad\times \| \Gamma_{|t-u+c_1-c_2|} \|_N\| \Gamma_{|t-u+d_1-d_2|} \|_N;\label{Eq:TermType1}\\
	& \frac{1}{16 \pi^4b^4N^2} \sum_{t,u=1}^N \sum_{r_1,s_1,r_2,s_2=1}^b \sum_{c_1,d_1,c_2,d_2=1}^b
	\| \Gamma_{|t-u+r_1-r_2|} \|_N\| \Gamma_{|t-u+s_1-s_2|} \|_N \nonumber\\
	&\qquad\qquad\qquad\qquad\qquad\qquad\qquad\qquad\times \| \cum(X_{t-u+c_1-d_2}, X_{t-u+d_1-d_2}, X_{c_2-d_2}, X_{0}  ) \|_{N};\label{Eq:TermType2}\\
	& \frac{1}{16 \pi^4b^4N^2} \sum_{t,u=1}^N \sum_{r_1,s_1,r_2,s_2=1}^b \sum_{c_1,d_1,c_2,d_2=1}^b
	\| \cum(X_{t-u+r_1-s_2}, X_{t-u+s_1-s_2}, X_{r_2-s_2}, X_{0}  ) \|_{N} \nonumber\\
	&\qquad\qquad\qquad\qquad\qquad\qquad\qquad\qquad\times \| \cum(X_{t-u+c_1-d_2}, X_{t-u+d_1-d_2}, X_{c_2-d_2}, X_{0}  ) \|_{N}. \label{Eq:TermType3}
	\end{align}
	Summing in \eqref{Eq:TermType1} first over $d_2$, then over $c_2$, then over $s_2$ and finally over $u$ and estimating the sum each time with 
	$\sum_{h\in\Z} \| \Gamma_h \|_N$ yields the bound
	\begin{align*}
	\frac{1}{16 \pi^4b^4N^2} \sum_{t=1}^N \sum_{r_1,s_1,r_2=1}^b \sum_{c_1,d_1=1}^b \left( \sum_{h\in\Z} \| \Gamma_h \|_N\right)^4
	= \mathcal{O}(b/N). 
	\end{align*}
	Summing in \eqref{Eq:TermType2} first over $c_1$, then over $d_1$, then over $d_2$, then over $s_2$ and finally over $u$ yields the bound
	\begin{align*}
	\frac{1}{16 \pi^4b^4N^2} \sum_{t=1}^N \sum_{r_1,s_1,r_2=1}^b \sum_{c_2=1}^b \left( \sum_{h\in\Z} \| \Gamma_h \|_N \right)
	\left( \sum_{h_1,h_2,h_3 \in \Z} \| \cum(X_{h_1},X_{h_2},X_{h_3},X_0) \|_{N} \right) = \mathcal{O}(1/N).
	\end{align*}
	Lastly summing in \eqref{Eq:TermType3} first over $c_1$, then over $d_1$, then over $d_2$, then over $r_1$, then over $u$ and finally over
	$s_2$ yields the bound 
	\begin{align*}
	\frac{1}{16 \pi^4b^4N^2} \sum_{t=1}^N \sum_{s_1,r_2=1}^b \sum_{c_2=1}^b 
	\left( \sum_{h_1,h_2,h_3 \in \Z} \| \cum(X_{h_1},X_{h_2},X_{h_3},X_0) \|_{N} \right)^2 = \mathcal{O}(1/(Nb)).
	\end{align*}
	Combining these estimates we can conclude that $\| S_{j,i}^{(1)} \|_{N} = \mathcal{O}(b/N) $ uniformly in $j,i$. Since 
	\begin{align*}
	\| S_{j,i}^{(2)} \|_{N}
	\leq \frac{1}{N^2} \sum_{t,u=1}^N \| \Cov( P_{\lambda_{j,b}}^{t,b},  P_{\lambda_{i,b}}^{u,b} ) \|_{N}
	\| \Cov( P_{\lambda_{i,b}}^{t,b},  P_{\lambda_{j,b}}^{u,b} ) \|_{N}
	\end{align*}
	this term can be handled similarly, that is we also have $\| S_{j,i}^{(2)} \|_{N} = \mathcal{O}(b/N) $.
	For the last term $S_{j,i}^{(3)}$ we have 
	\begin{align*}
	&\| S_{j,i}^{(3)} \|_{N}\\ 
	\leq& \frac{1}{N^2} \sum_{t,u=1}^N
	\| \cum( Q_{\lambda_{j,b}}^{t,b}, Q_{\lambda_{i,b}}^{t,b}, Q_{\lambda_{j,b}}^{u,b}, Q_{\lambda_{i,b}}^{u,b} ) \|_{N}\\
	\leq& \frac{1}{16\pi^4N^2b^4} \sum_{t,u=1}^N \sum_{r_1,s_1,\ldots,r_4,s_4=1}^b
	\| \cum( X_{t+r_1-1}\tsr X_{t+s_1-1} - \Gamma_{r_1-s_1},  X_{t+r_2-1}\tsr X_{t+s_2-1}-\Gamma_{r_2-s_2}, \\
	&X_{u+r_3-1}\tsr X_{u+s_3-1}-\Gamma_{r_3-s_3},  X_{u+r_4-1}\tsr X_{u+s_4-1}-\Gamma_{r_4-s_4} ) \|_{N}.
	\end{align*}
	If we decompose the $4$th-order cumulant operator a tedious calculation shows that, due to Assumption~\ref{as.1}(iii), 
	the dominating terms in this sum are 
	given by products of covariance operators, with a typical one given by 
	\begin{align*}
	&\frac{1}{16\pi^4N^2b^4} \sum_{t,u=1}^N \sum_{r_1,s_1,\ldots,r_4,s_4=1}^b
	\|\Cov( X_{t+r_1-1}, X_{u+s_3-1} )\|_{N} \|\Cov( X_{t+r_2-1}, X_{u+s_4-1} )\|_{N} \\
	&\qquad\qquad\qquad\qquad\qquad\qquad\times \|\Cov( X_{u+s_3-1}, X_{t+s_1-1} )\|_{N}
	\|\Cov( X_{u+r_4-1}, X_{t+s_2-1} )\|_{N}\\
	=& \frac{1}{16\pi^4N^2b^4} \sum_{t,u=1}^N \sum_{r_1,s_1,\ldots,r_4,s_4=1}^b
	\|\Gamma_{t-u+r_1-s_3}\|_N  \|\Gamma_{t-u+r_2-s_4}\|_N \|\Gamma_{u-t+r_4-s_4}\|_N \|\Gamma_{u-t+r_4-s_2}\|_N
	\end{align*}
	Summing over $s_2$, $r_4$, $r_2$ and then  $u$, yields the upper bound 
	\begin{align*}
	\frac{1}{16\pi^4N^2b^4} \sum_{t=1}^N \sum_{r_1,s_1,r_3,s_3,s_4=1}^b \left( \sum_{h\i\Z} \|\Gamma_h\|_N\right)^4 = \mathcal{O}(b/N)
	\end{align*} 
	and hence $\|S_{j,i}^{(3)}\|_{N} = \mathcal{O}(b/N)$ as well. 
\end{proof}


\subsection{Proofs of main results}
\begin{proof}[Proof of Lemma~\ref{le.App-CLT-1}]
Consider  (i).  
Note that, by construction, $\E P_{n,\lambda_{j,n}}^\star=\widehat{F}_{\lambda_{j,n}}$,  the $P_{n,\lambda_{j,n}}^\star$ are  independent  
for $ j=1,2, \ldots, N$  and  they satisfy 
\begin{align*}
{\rm Cov}^\star (P_{n,\lambda_{j,n}}^\star) & = \E^\star  \big(P_{n,\lambda_{j,n}}^\star \otimes P_{n,\lambda_{j,n}}^\star \big)- \widehat{F}_{\lambda_{j,n}}\otimes \widehat{F}_{\lambda_{j,n}} \\
& = \E\big((J_n^\star (\lambda_{j,n}) \otimes J_n^\star (\lambda_{j,n}) )\otimes (J_n^\star (\lambda_{j,n}) \otimes J_n^\star (\lambda_{j,n}))\big) -  \widehat{F}_{\lambda_{j,n}}\otimes \widehat{F}_{\lambda_{j,n}}\\
& ={\rm cum}\big(J_n^\star (\lambda_{j,n}),\overline{J_n^\star} (\lambda_{j,n}), \overline{J_n^\star} (\lambda_{j,n}),J_n^\star (\lambda_{j,n})\big) \\
& \ \  \ \ +  \E\big((J_n^\star (\lambda_{j,n}) \otimes J_n^\star (\lambda_{j,n}) ) \otimes_{op}  \E\big((J_n^\star (\lambda_{j,n}) \otimes J_n^\star (\lambda_{j,n}) )\\
& \ \  \ \ +  \E\big((J_n^\star (\lambda_{j,n}) \otimes \overline{J_n^\star} (\lambda_{j,n}) ) \otimes_{op}^\top  \E\big((\overline{J_n^\star} (\lambda_{j,n}) \otimes J_n^\star (\lambda_{j,n}) )\\
& = \widehat{\F}_{\lambda_{j,n}}\otimes_{op} \widehat{\F}_{\lambda_{j,n}}.
\end{align*}
The last equality appears since   the term $ {\rm cum}^\star(J^\star_n(\lambda_{j,n}), \overline{J^\star}_n(\lambda_{j,n}), \overline{J^\star}_n(\lambda_{j,n}), J^\star_n(\lambda_{j,n}))=0$  and $  {\rm E}\big((J_n^\star (\lambda_{j,n}) \otimes \overline{J_n^\star} (\lambda_{j,n}) )={\rm Rel}( J_n^\star (\lambda_{j,n}) )=0$ 
because $ J_n^\star (\lambda_{j,n}) \sim {\mathcal C}{\mathcal N}(0,\widehat{\F}_{\lambda_{j,n}})$. To see why the former statement is true  note that for $f,g,u,v\in\HH$, we have 
\begin{align*}
\big\langle {\rm cum}^\star \big( J^\star_n(\lambda_{j,n}),&  \overline{J^\star}_n(\lambda_{j,n}), \overline{J^\star}_n(\lambda_{j,n}), J^\star_n(\lambda_{j,n})\big), (f \otimes g)\otimes(u\otimes v) \big \rangle _{HS(HS)}\\
& = {\rm cum}^\star \big( \langle J^\star_n(\lambda_{j,n}),  f \rangle,  \langle \overline{J^\star}_n(\lambda_{j,n}) ,\overline{g}\rangle,  \langle \overline{J^\star}_n(\lambda_{j,n}) ,\overline{u}\rangle,  \langle J^\star_n(\lambda_{j,n}),  v \rangle \big)\\
& =0,
\end{align*}
because $\big( \langle J^\star_n(\lambda_{j,n}),  f \rangle,  \langle \overline{J^\star}_n(\lambda_{j,n}) ,\overline{g}\rangle,  \langle \overline{J^\star}_n(\lambda_{j,n}) ,\overline{u}\rangle,  \langle J^\star_n(\lambda_{j,n}),  v \rangle \big) $ is  a jointly complex Gaussian vector.  Since $ \{(f\otimes g)\otimes (u\otimes v) \big| f,g,u,v \in \HH \}$  is dense in $HS(HS)$, this yields ${\rm cum}^\star(J^\star_n(\lambda_{j,n}), \overline{J^\star}_n(\lambda_{j,n}), \overline{J^\star}_n(\lambda_{j,n}), J^\star_n(\lambda_{j,n}))=0 $.

Along the same lines we get  
\[ {\rm Cov}^\star (\overline{P}_{n,\lambda_{j,n}}^\star)  = \E^\star  \big(\overline{P}_{n,\lambda_{j,n}}^\star \otimes \overline{P}_{n,\lambda_{j,n}}^\star \big)- \overline{\widehat{\F}}_{\lambda_{j,n}}\otimes \overline{\widehat{\F}}_{\lambda_{j,n}}=  \overline{\widehat{\F}}_{\lambda_{j,n}}\otimes_{op} \overline{\widehat{\F}}_{\lambda_{j,n}} ,\]
\[ {\rm Rel}^\star (P_{n,\lambda_{j,n}}^\star)  =  \E^\star  \big(P_{n,\lambda_{j,n}}^\star \otimes \overline{P}_{n,\lambda_{j,n}}^\star\big) - \widehat{\F}_{\lambda_{j,n}}\otimes \overline{\widehat{\F}}_{\lambda_{j,n}} 
   = \widehat{\F}_{\lambda_{j,n}}\otimes^\top_{op} \overline{\widehat{\F}}_{\lambda_{j,n}}\]
  and
  \[{\rm Rel}^\star (\overline{P}_{n,\lambda_{j,n}}^\star)  =  \E^\star  \big(\overline{P}_{n,\lambda_{j,n}}^\star \otimes P_{n,\lambda_{j,n}}^\star\big) - \overline{\widehat{\F}}_{\lambda_{j,n}}\otimes  \widehat{\F}_{\lambda_{j,n} }
   = \overline{\widehat{\F}}_{\lambda_{j,n}}\otimes^\top_{op} \widehat{\F}_{\lambda_{j,n}}.\]
 Next, we   write $ L_n^\star$ as 
\begin{align*}
L^\star_{n} & =\frac{2\pi}{\sqrt{n}} \sum_{j =1}^ N \Big\{W(\lambda_{j,n}) \big(P_{n,\lambda_{j,n}}^\star- \widehat{\F}_{\lambda_{j,n}}\big) + W(-\lambda_{j,n}) \big(\overline{P}_{n,\lambda_{j,n}}^\star - \overline{\widehat{\F}}_{\lambda_{j,n}}\big)\Big\}\\
& = \sum_{j =1}^N Z_{j,n}^\star,
\end{align*}
with an obvious notation for  $Z_{j,n}^\star$. According to the last expression,  $L^\ast_n$ is a  sum of $N$ independent random elements. 
Using the expression for the covariance and relation operator of the pseudo periodogram operator $P^\ast_{n,\lambda_{j,n}}$ we get by straightforward calculations, 
\begin{align} \label{eq.Cov1.1}
{\rm Cov}^\star(L^\star_n) & = \sum_{j=1}^N {\rm Cov}^\star(Z^\star_{j,n}, Z_{j,n}^\star) \nonumber \\
& = \frac{4 \pi^2}{n}\sum_{j\in {\mathcal G}(n)} W(\lambda_{j,n})\overline{W}(\lambda_{j,n})  \widehat{F}_{\lambda_{j,n}}\otimes_{op} \widehat{F}_{\lambda_{j,n}}  \nonumber \\
& \ \ \ \  +  \frac{4 \pi^2}{n}\sum_{j\in {\mathcal G}(n)} W(\lambda_{j,n})\overline{W}(-\lambda_{j,n})  \widehat{F}_{\lambda_{j,n}}\otimes_{op}^\top  \widehat{F}_{-\lambda_{j,n}}. 
\end{align}
Observe that 
\[ \widehat{\F}_{\lambda} \otimes_{op}\widehat{\F}_\lambda = (\widehat{\F}_\lambda -\F_\lambda)\otimes_{op} \widehat{\F}_\lambda +\widehat{\F}_\lambda \otimes_{op} (\widehat{\F}_\lambda-\F_\lambda) + \F_\lambda\otimes \F_\lambda,\]
with an analogue expression for $ \widehat{\F}_{\lambda} \otimes_{op}^\top \widehat{\F}_\lambda$, 
and by assumption, for example, 
\begin{align*}
\Big\| \frac{4\pi^2}{n}\sum_{j\in{\mathcal G}(n)} & W(\lambda_{j,n}) \overline{W}(\lambda_{j,n}) (\widehat{\F}_{\lambda_{j,n}}-\F_{\lambda_{j,n}}) \otimes_{op} \widehat{\F}_{\lambda_{j,n}} \Big\|_N\\
& \leq \frac{4\pi^2}{n}\sum_{j\in{\mathcal G}(n)} |W(\lambda_{j,n})|^2 \| \widehat{\F}_{\lambda_{j,n}}-\F_{\lambda_{j,n}}\|_N\|\widehat{\F}_{\lambda_{j,n}}\|_N\\
& \leq \sup_{\lambda_{j,n}} \| \widehat{\F}_{\lambda_{j,n}}-\F_{\lambda_{j,n}}\|_N \sup_{\lambda_{j,n}} \| \widehat{\F}_{\lambda_{j,n}} \|_N \frac{4\pi^2}{n}\sum_{j\in {\mathcal G}(n)} |W(\lambda_{j,n})|^2\\
& = {\mathcal O}_P\big(\sup_{\lambda_{j,n}} \| \widehat{\F}_{\lambda_{j,n}}-\F_{\lambda_{j,n}}\|_N \big) =o_{P}(1),
\end{align*}
by Assumption~\ref{as.F}. 
Therefore, 
\begin{align} \label{eq.Cov1}
\|{\rm Cov}^\star(L_n^\star) -\Psi_1\|_N  & \leq \big\| \frac{4 \pi^2}{n}\sum_{j\in {\mathcal G}(n)} W(\lambda_{j,n})\overline{W}(\lambda_{j,n})  \F_{\lambda_{j,n}}\otimes_{op} \F_{\lambda_{j,n}}  \\
& \ \ \ \ \ \ \ \ -  2\pi\int_{\pi}^\pi W(\lambda)\overline{W}(\lambda) \F_\lambda \otimes_{op} \F_\lambda d\lambda \big\|_N \nonumber \nonumber \\
 &  + \big\| \frac{4 \pi^2}{n}\sum_{j\in {\mathcal G}(n)} W(\lambda_{j,n})\overline{W}(-\lambda_{j,n}) \F_{\lambda_{j,n}}\otimes_{op}^\top \F_{\lambda_{j,n}} \nonumber \\
& \ \ \ \ \ \ \ \ \ - 2\pi\int_{\pi}^\pi W(\lambda)\overline{W}(-\lambda) \F_\lambda \otimes_{op}^\top  \F_\lambda d\lambda \big\|_N  + o_{P}(1) \nonumber \\
& \stackrel{P}{\rightarrow}  0 \nonumber 
\end{align}
since 
$ \lambda\mapsto W(\lambda)\overline{W}(\lambda)\F_\lambda\otimes_{op} \F_\lambda \in N(HS)$, respectively, 
  $ \lambda\mapsto W(\lambda)\overline{W}(\lambda)\F_\lambda\otimes_{op}^\top \F_\lambda \in N(HS)$ and due to  the 
 convergence of the Riemann sum to the Bochner  integral   in view of  Assumption~\ref{as.W} and the continuity of $\lambda\mapsto  \F_\lambda \otimes_{op} \F_{\lambda}$, respectively,  $\lambda\mapsto  \F_\lambda \otimes_{op}^\top  \F_\lambda$; see also the evaluation of (\ref{Eq:RiemannSumR1}) in the proof of Lemma~\ref{Lem:AsymptoticCovarianceRiemannApprox}.

Along the same lines  and as for ${\rm Cov}^\star(L_n^\star)$ we 
 get 
\begin{align} \label{eq.Rel1}
{\rm Rel}^\star(L^\star_n) & = \sum_{j=1}^N {\rm Cov}^\star(Z^\star_{j,n}, \overline{Z}_{j,n}^\star) \\
& = \frac{4 \pi^2}{n}\sum_{j\in {\mathcal G}(n)} W(\lambda_{j,n}) W(-\lambda_{j,n})  \widehat{\F}_{\lambda_{j,n}}\otimes_{op}  \widehat{\F}_{\lambda_{j,n}}  \nonumber \\
& \ \ \ \  +  \frac{4 \pi^2}{n}\sum_{j\in {\mathcal G}(n)} W(\lambda_{j,n})W(\lambda_{j,n})  \widehat{\F}_{\lambda_{j,n}}\otimes^\top_{op}  \widehat{\F}_{-\lambda_{j,n}}\nonumber  \\
& \stackrel{P}{\rightarrow} 
 2\pi\int_{\pi}^\pi W(\lambda)W(-\lambda) \F_\lambda \otimes_{op}  \F_\lambda d\lambda  \nonumber \\
 & \ \ \ \ \  + 2\pi  \int_{\pi}^\pi W(\lambda)W(\lambda) \F_\lambda \otimes_{op}^\top \F_{-\lambda} d\lambda=\Upsilon_1, \nonumber
\end{align}
in trace norm.

Consider (ii). Since  $L^\star_{n} =\sum_{j =1}^N Z_{j,n}^\star$ is a sum of independent random Hilbert-Schmidt operatros,  to establish assertion (ii)  it suffices by Theorem 3.2 of \cite{Billingsley1999} to show that 
for any basis $\{ e_r\otimes e_s: r,s=1,2, \ldots\}$  of  $HS(H)$, the following three conditions are satisfied (in $HS$-norm):
\begin{enumerate}
\item[1)] $\PP_k\otimes_{op}  \PP_{k}(L^\star_n) = \sum_{r,s=1}^k \sum_{j=1}^N \langle Z_{j,n}^\star, e_r \otimes e_s\rangle e_r \otimes e_s \stackrel{{\mathcal D} }{\rightarrow} {\mathcal G}_k$,
as $ n\rightarrow\infty$ for every $k\in\N$, where  
$ {\mathcal G}_k \sim {\mathcal C}{\mathcal N}(0,\Psi_1^{(k)},  \Upsilon_1^{(k)})$, with 
\begin{align*}
 \Psi^{(k)}_1 & = \sum_{r_1,s_1=1}^k \sum_{r_2,s_2=1}^k \Big\langle   \Psi_1(e_{r_2}\otimes e_{s_2}),  e_{r_1}\otimes e_{s_1}\Big\rangle ( e_{r_1}\otimes e_{s_1})\otimes ( e_{r_2}\otimes e_{s_2}) \\
 & =  ( \PP_k\otimes_{op}  \PP_{k}) \Psi_1 (\PP_k\otimes_{op}  \PP_{k})
 \end{align*}
and 
\begin{align*}
 \Upsilon^{(k)}_{1}& = \sum_{r_1,s_1=1}^k \sum_{r_2,s_2=1}^k \Big\langle   \Upsilon_1(e_{r_2}\otimes e_{s_2}),  e_{r_1}\otimes e_{s_1}\Big\rangle ( e_{r_1}\otimes e_{s_1})\otimes ( e_{r_2}\otimes e_{s_2}) \\
  & =  ( \PP_k\otimes_{op}  \PP_{k}) \Upsilon_1 (\PP_k\otimes_{op}  \PP_{k}).
 \end{align*}
\item[]
\item[2)] $  {\mathcal G}_k  \stackrel{{\mathcal D} }{\rightarrow} {\mathcal G} \sim  {\mathcal C}{\mathcal N}(0,\Psi_1, \Upsilon_1)$ as $ k \rightarrow \infty$,
 \ and
 \item[] 
 \item[3)]   For every $ \epsilon>0$, $ \lim_{k\rightarrow\infty}\limsup_{n\rightarrow\infty} P\big( \| \PP_k\otimes_{op} \PP_k  (L^\star_{n}) - L_{n}^\star\|_{HS} > \epsilon \big)=0$.
\end{enumerate} 

\noindent Consider 1). Note that
\begin{align*}
\PP_k\otimes \PP_{k}(L^\star_n) &= \sum_{r,s=1}^k\frac{2\pi}{\sqrt{n}}\sum_{j=1}^N \Big\{ W(\lambda_{j,n}) \langle P_{n,\lambda_{j,n}}^\star- \widehat{\F}_{\lambda_{j,n}},  e_r
\otimes e_s  \rangle  e_r\otimes e_s \\
& \ \ \ \ \  +W(-\lambda_{j,n}) \langle \overline{P}_{n,\lambda_{j,n}}^\star- \overline{\widehat{\F}}_{\lambda_{j,n}},  e_r
\otimes e_s  \rangle  e_r\otimes e_s \Big\} .
\end{align*} 
For the covariance and relation operator of $ \PP_k\otimes_{op}  \PP_k(L_n^\star)$ we have by(i),
\begin{align*}
{\rm Cov}^\star& (\PP_k\otimes_{op} \PP_k( L_n^\star) )=  (\PP_k\otimes \PP_k) {\rm Cov}^\star(L^\star_n) (\PP_k\otimes_{op} \PP_k) \\
& = \sum_{r_1,s_1=1}^k \sum_{r_2,s_2=1}^k \Big\langle   {\rm Cov}^\star(L^\star_n) (e_{r_2}\otimes e_{s_2}),  e_{r_1}\otimes e_{s_1}\Big\rangle ( e_{r_1}\otimes e_{s_1})\otimes ( e_{r_2}\otimes e_{s_2})\\
& \stackrel{P}{\rightarrow} \sum_{r_1,s_1=1}^k \sum_{r_2,s_2=1}^k \Big\langle   \Psi_1(e_{r_2}\otimes e_{s_2}),  e_{r_1}\otimes e_{s_1}\Big\rangle ( e_{r_1}\otimes e_{s_1})\otimes ( e_{r_2}\otimes e_{s_2}) .
\end{align*}
 Along the same lines  we  also get,
\begin{align*}
{\rm Rel}^\star(& \PP_k\otimes_{op} \PP_k(L_n^\star))   =  (\PP_k\otimes_{op} \PP_k) {\rm Rel}^\star(L^\star_n) (\PP_k\otimes_{op} \PP_k) \\
& \stackrel{P}{\rightarrow} \sum_{r_1,s_1=1}^k \sum_{r_2,s_2=1}^k \Big\langle   \Upsilon_1(e_{r_2}\otimes e_{s_2}),  e_{r_1}\otimes e_{s_1}\Big\rangle ( e_{r_1}\otimes e_{s_1})\otimes ( e_{r_2}\otimes e_{s_2}). \end{align*}
To establish the weak convergence of $ \PP_k\otimes_{op} \PP_k(L_n^\star)$,  let 
\begin{align*}  
Y^\star_{j,n} (r,s) & := 2\pi  \Big\{ W(\lambda_{j,n}) \big\langle P_{n,\lambda_{j,n}}^\star- \widehat{\F}_{\lambda_{j,n}},  e_r
\otimes e_s \big\rangle  \\
& \ \ \ \  \ \ \ \  +W(-\lambda_{j,n}) \big\langle \overline{P}_{n,\lambda_{j,n}}^\star- \overline{\widehat{\F}}_{\lambda_{j,n}},  e_r
\otimes e_s \big\rangle  \Big\} \\
& = \sqrt{n} \langle Z_{j,n}^\star, e_r\otimes e_s\rangle 
\end{align*}
such  that   
\begin{equation} \label{eq.CLTrs-1}
 \PP_k\otimes_{op} \PP_k(L_n^\star) = \sum_{r,s=1}^k  \frac{1}{\sqrt{n}} \sum_{j=1}^N Y^\star_{j,n} (r,s) (e_r\otimes e_s).
  \end{equation}
It then follows from independence and (i), that 
\begin{align*}
\frac{1}{n}\sum_{j=1}^N {\rm Cov}^\star\big(Y_{j,n}^\star(r,s)) &= \sum_{j=1}^N {\rm Cov}^\star \big(\big\langle Z_{j,n}^\star, e_r\otimes e_s\big\rangle\big) \\
& =\big\langle   {\rm Cov}^\star( L_n^\star)(e_r\otimes e_s), e_r\otimes e_s \big\rangle \\
& \stackrel{P}{\rightarrow} \Big\langle  \Psi_1(e_r\otimes e_s), e_r\otimes e_s\Big\rangle =: \Psi_1(r,s).
\end{align*}
 and  
\begin{align*}
\frac{1}{n}\sum_{j=1}^N {\rm Rel}^\star\big(Y_{j,n}^\star(r,s)) &= \big\langle  \sum_{j=1}^N {\rm Rel}^\star (Z_{j,n}^\star)(e_r\otimes e_s), e_r\otimes e_s \big\rangle \\
&=  \langle {\rm Rel}^\star(L_n^\star)(e_r\otimes e_s), e_r\otimes e_s\rangle \\
& \stackrel{P}{\rightarrow} \Big\langle  \Upsilon_1(e_r\otimes e_s), e_r\otimes e_s\Big\rangle =: \Upsilon_1(r,s).
\end{align*}
We next  use the complex version of Liapunov's CLT to show that 
\begin{equation} \label{eq.CLTrs}
  \frac{1}{\sqrt{n}} \sum_{j=1}^N Y^\star_{j,n} (r,s) =  \frac{1}{\sqrt{N}} \sum_{j=1}^N \sqrt{\frac{N}{n}}Y^\star_{j,n} (r,s)     \stackrel{{\mathcal D} }{\rightarrow}  {\mathcal C}{\mathcal N}\big(0,\Psi_1(r,s) , \Upsilon_1(r,s)\big),
\end{equation}
in probability, as $n\rightarrow\infty$. To that end, note that for  $\delta>0$,
\begin{align*}
\E^\star\Big| \sqrt{\frac{N}{n}}Y^\star_{j,n} (r,s) \Big|^{2+\delta} & \leq 2^{\delta/2}(2\pi)^{2+\delta}\Big\{|W(\lambda_{j,n})|^{2+\delta}\E^\star\big| \langle P^\star_{\lambda_{j,n}}-\widehat{\F}_{\lambda_{j,n}}, e_r\otimes e_s\rangle \big|^{2+\delta}\\
& \ \ \ \ \ \ \ \  +  |W(-\lambda_{j,n})|^{2+\delta}\E^\star\big| \langle \overline{P^\star}_{\lambda_{j,n}}-\overline{\widehat{\F}}_{\lambda_{j,n}}, e_r\otimes e_s\rangle \big|^{2+\delta}\\
& \leq 2^{\delta/2}(2\pi)^{2+\delta} \big(|W(\lambda_{j,n})|^{2+\delta} + |W(-\lambda_{j,n})|^{2+\delta}\big) \\
& \ \ \ \ \ \ \ \ \ \ \   \times \E^\star\|P^\star_{\lambda_{j,n}}-\widehat{\F}_{\lambda_{j,n}}\|_{HS}^{2+\delta}\\
& \leq 2^{\delta/2}(2\pi)^{2+\delta} \big(|W(\lambda_{j,n})|^{2+\delta} + |W(-\lambda_{j,n})|^{2+\delta}\big) \\
& \ \ \ \ \ \ \ \ \ \ \   \times 2^{1+\delta}\big( \E^\star\|P^\star_{\lambda_{j,n}}\|_{HS}^{2+\delta}  + \|\widehat{\F}_{\lambda_{j,n}}\|_{HS}^{2+\delta}\big)\\
& \leq 2^{1+3\delta/2}(2\pi)^{2+\delta} \big(|W(\lambda_{j,n})|^{2+\delta} + |W(-\lambda_{j,n})|^{2+\delta}\big) \\
& \ \ \ \ \ \ \ \ \ \ \   \times \big( \E^\star\|J^\star_{\lambda_{j,n}}\|^{4+2\delta}  + \|\widehat{\F}_{\lambda_{j,n}}\|_{HS}^{2+\delta}\big),
\end{align*}
where the second inequality follows using Cauchy-Schwarz's inequality.   Now, note that by construction,  ${\rm Re}(J_{\lambda_{j,n}}^\star) \sim {\mathcal N}(0,{\rm Re}(\widehat{\F}_{\lambda_{j,n}})/2)$ and 
  ${\rm Im}(J_{\lambda_{j,n}}^\star) \sim {\mathcal N}(0,{\rm Re}(\widehat{\F}_{\lambda_{j,n}})/2)$. Thus for  $\delta=2$ and  using   Lemma~\ref{le.Mom} we get
\begin{align*}
\E^\star \|J_{\lambda_{j,n}}^\star\|^8  & \leq 2^7\big(\E^\star \|{\rm Re}(J_{\lambda_{j,n}}^\star)\|^8 +  \E^\star \|{\rm Im}(J_{\lambda_{j,n}}^\star)\|^8\big)\\
 & \leq 2^7\Big( 2\, C_4 4! \|{\rm Re}(\widehat{\F}_{\lambda_{j,n}})/2\|_N^4\Big)\\
 & \leq 2^4 4!\, C_4  \|\widehat{\F}_{\lambda_{j,n}}\|_N^4.
\end{align*}
 Therefore  because  \[|W(\lambda_{j,n})|^4 + |W(-\lambda_{j,n})|^4 \leq 2\big( |W(0)| + \|W\|_{TV}\big)^4=:C_W<\infty,\]
 we have
\begin{align*}
\E^\star \Big| \sqrt{\frac{N}{n}}Y^\star_{j,n} (r,s) \Big|^4 &\leq 2^4(2\pi)^4 C_W \big\{2^4 4!\, C_4 \| \widehat{\F}_{\lambda_{j,n}}\|_N^4 + \| \widehat{\F}_{\lambda_{j,n}}\|_N^4\big\},
\end{align*}
which  is bounded  in probability,   uniformly in  $\lambda_{j,n}$ since 
$ \sup_{\lambda_{j,n} }\|\widehat{\F}_{\lambda_{j,n}}\|_N^4 $ is bounded in probability by Assumption~\ref{as.F}. Hence 
Lyapunov's condition  for $\delta=2$ is satisfied  by $\sqrt{N/n}Y^\star_{j,n}(r,s) $,  which  yields the 
weak convergence   (\ref{eq.CLTrs}).  

Now, to conclude the proof of 1.), observe that 
\[ {\mathcal C}{\mathcal N}\big(0, \Psi_{1}(r,s), \Upsilon_1(r,s)\big) \sim \langle  {\mathcal G}, e_r\otimes e_s \rangle,\]
and  therefore, 
\[ \sum_{r,s=1}^k \Big(   \frac{1}{\sqrt{n}} \sum_{j=1}^N Y^\star_{j,n} (r,s) \Big) (e_r\otimes e_s)   \stackrel{{\mathcal D}}{\rightarrow}  \sum_{r,s=1}^k \langle {\mathcal G}, e_r\otimes e_s\rangle (e_r\otimes e_s)= {\mathcal G}_k. \]

\noindent Consider 2).  \ By Lemma~\ref{le.App-2}
it suffices to show that , as $ k\rightarrow \infty$, 
\begin{equation} \label{eq.Co2.1}
\|{\rm Cov}({\mathcal G}_k) -\Psi_1\|_N \rightarrow 0,
\end{equation}
and
\begin{equation} \label{eq.Co2.2}
\|{\rm Rel}({\mathcal G}_k) -\Upsilon_1\|_N \rightarrow 0.
\end{equation}
We only show (\ref{eq.Co2.1}) since the same arguments  are used   for (\ref{eq.Co2.2}) too.  Following the calculation  to derive $ {\rm Cov}^\star(\PP_k\otimes_{op} \PP_k(L_n^\star))$ it
follows that  
\begin{align*}
\Psi_1^{(k)} & =  \sum_{r_1,s_1=1}^k \sum_{r_2,s_2=1}^k \Big\langle   \Psi_1(e_{r_2}\otimes e_{s_2}),  e_{r_1}\otimes e_{s_1}\Big\rangle ( e_{r_1}\otimes e_{s_1})\otimes ( e_{r_2}\otimes e_{s_2})\\
& = {\rm Cov}\big( \sum_{r,s=1}^k \langle {\mathcal G}, e_r\otimes e_s\rangle (e_r\otimes e_s)\big) \\
& = {\rm Cov}\big(\PP_k\otimes_{op} \PP_k({\mathcal G})\big)\\
& =(\PP_k\otimes_{op}  \PP_k) {\rm Cov}({\mathcal G})(\PP_k\otimes_{op}  \PP_k)= (\PP_k\otimes_{op}  \PP_k )\Psi_1(\PP_k\otimes_{op}  \PP_k).
\end{align*}
Thus $ \|\Psi_1^{(k)} -\Psi_1\|_N =\|( \PP_k\otimes_{op}  \PP_k )\Psi_1(\PP_k\otimes_{op}  \PP_k)- \Psi_1\|_N \rightarrow 0$ as $ k\rightarrow \infty$ by Theorem VI. 4.3  of \cite{GohbergEtAl1990}.

\noindent Consider 3). \ By Markov's inequality we get for any $\epsilon >0$ and because $ \E^\star(L_n^\star)=0$,
\begin{align*}
P\big( \|\PP_k\otimes_{op} \PP_k(L_n^\star)&  -L_n^\star\|_{HS} >\epsilon \big)  \leq \frac{1}{\epsilon^2}\E^\star\|\PP_k\otimes_{op} \PP_k(L_n^\star) -L_n^\star\|_{HS}^2\\
& = \frac{1}{\epsilon^2}\Big\| \E^\star\big(\PP_k\otimes_{op} \PP_k(L_n^\star)-L_n^\star\big) \otimes \big(\PP_k\otimes_{op} \PP_k(L_n^\star)-L_n^\star \big)\Big\|_N\\
& =\frac{1}{\epsilon^2}\Big\|(\PP_k\otimes_{op} \PP_k){\rm Cov}^\star (L_n^\star)(\PP_k\otimes_{op} \PP_k) -(\PP_k\otimes_{op} \PP_k ) {\rm Cov}^\star (L_n^\star)\\
& \ \ \ \ \ \ \ \ -{\rm Cov}^\star (L_n^\star)(\PP_k\otimes_{op} \PP_k) -{\rm Cov}^\star (L_n^\star)\Big\|_N\\
& \leq \frac{1}{\epsilon^2} \big\|(\PP_k\otimes_{op} \PP_k){\rm Cov}^\star (L_n^\star)(\PP_k\otimes_{op} \PP_k) -(\PP_k\otimes_{op} \PP_k ) {\rm Cov}^\star (L_n^\star)\big\|_N \\
& \ \ \ \ \ \ \ \  +  \frac{1}{\epsilon^2} \big\|{\rm Cov}^\star (L_n^\star)- {\rm Cov}^\star (L_n^\star)(\PP_k\otimes_{op} \PP_k)\big\|_N \\
& = \frac{1}{\epsilon^2} \big\|(\PP_k\otimes_{op} \PP_k)\big({\rm Cov}^\star (L_n^\star)(\PP_k\otimes_{op} \PP_k) -  {\rm Cov}^\star (L_n^\star)\big) \big\|_N \\
& \ \ \ \ \ \ \ \  +  \frac{1}{\epsilon^2} \big\|{\rm Cov}^\star (L_n^\star)- {\rm Cov}^\star (L_n^\star)(\PP_k\otimes_{op} \PP_k)\big\|_N\\
& \leq \frac{2}{\epsilon^2}  \big\|{\rm Cov}^\star (L_n^\star)- {\rm Cov}^\star (L_n^\star)(\PP_k\otimes_{op} \PP_k)\big\|_N \\
& \stackrel{P}{\rightarrow}  \frac{2}{\epsilon^2}  \big\|\Psi_1-\Psi_1(\PP_k\otimes_{op} \PP_k)\big\|_N,
\end{align*}
as $ n\rightarrow \infty$, by assertion (i) of the lemma. Condition 3) is then  established because 
\begin{align*}
\|\Psi_1-\Psi_1(\PP_k\otimes_{op}&  \PP_k)\|_N   \leq  \|\Psi_1-(\PP_k\otimes_{op} \PP_k)\Psi_1 (\PP_k\otimes_{op} \PP_k)\|_{N} \\
& \ \  + \|(\PP_k\otimes_{op} \PP_k)\Psi_1 (\PP_k\otimes_{op} \PP_k)(\PP_k\otimes_{op} \PP_k) -  \Psi_1(\PP_k\otimes_{op} \PP_k)\|_{N}  \rightarrow 0,
\end{align*}
 as $ k \rightarrow \infty$ by Theorem VI. 4.3  of \cite{GohbergEtAl1990} again.
\end{proof}


\begin{proof}[Proof of Lemma \ref{Lem:CBPConsistencyCovariance}]
The random variables $P_{\lambda_{j,b}}^{(l),b}$, $l=1,\ldots, k,$ are conditionally independent, hence  	
\begin{align*}
	\Cov^*(L_n^+) 
	&= \frac{(2\pi)^2 k}{b}  \sum_{j,i \in \G(b)} W(\lambda_{j,b}) \overline{W(\lambda_{i,b})} \Cov^*( P_{j,b}^+,  P_{i,b}^+)\\
	&= \frac{(2\pi)^2 k}{b}  \sum_{j,i \in \G(b)} W(\lambda_{j,b}) \overline{W(\lambda_{i,b})} 
	\frac{1}{k^2} \sum_{l,r=1}^k \Cov^*( P_{\lambda_{j,b}}^{(l),b}, P_{\lambda_{i,b}}^{(r),b} )\\
	&= \frac{(2\pi)^2 k}{b}  \sum_{j,i \in \G(b)} W(\lambda_{j,b}) \overline{W(\lambda_{i,b})} 
	\frac{1}{k^2} \sum_{l=1}^k \Cov^*( P_{\lambda_{j,b}}^{(l),b}, P_{\lambda_{i,b}}^{(l),b} )\\
	&= \frac{(2\pi)^2}{b}  \sum_{j,i \in \G(b)} W(\lambda_{j,b}) \overline{W(\lambda_{i,b})} \Cov^*( P_{\lambda_{j,b}}^{(1),b}, P_{\lambda_{i,b}}^{(1),b} )\\
	&= \frac{(2\pi)^2}{b} \Bigg\{ 
	\sum_{j \in \G(b)} W(\lambda_{j,b}) \overline{W(\lambda_{j,b})} \Cov^*( P_{\lambda_{j,b}}^{(1),b}, P_{\lambda_{j,b}}^{(1),b} ) \\
	& \ \ \ \ +W(\lambda_{j,b}) \overline{W(-\lambda_{j,b})} \Cov^*( P_{\lambda_{j,b}}^{(1),b}, P_{-\lambda_{j,b}}^{(1),b} )\\
	&\ \ \ \ + \sum_{j \in \G(b)} \sum_{i \in \G(b)\atop i\not= \pm j} W(\lambda_{j,b})  \overline{W(\lambda_{i,b})}  
	\Cov^*( P_{\lambda_{j,b}}^{(1),b}, P_{\lambda_{i,b}}^{(1),b} )
	\Bigg\}\\
	&= R_1^* + R_2^* + R_3^*.
\end{align*}
Notice that for 
\begin{align*}
	\Cov^*( P_{\lambda_{j,b}}^{(1),b}, P_{\lambda_{i,b}}^{(1),b} )
	&= \E^*[ P_{\lambda_{j,b}}^{(1),b} \tsr P_{\lambda_{i,b}}^{(1),b} ] - \E^*[ P_{\lambda_{j,b}}^{(1),b} ] \tsr \E^*[ P_{\lambda_{i,b}}^{(1),b} ]\\
	&= \frac{1}{N} \sum_{t=1}^N P_{\lambda_{j,b}}^{t,b}\tsr P_{\lambda_{i,b}}^{t,b} - \wt{\F}_{\lambda_{j,b}} \tsr \wt{\F}_{\lambda_{i,b}}
	=: H_{j,i,n}
\end{align*}
we have 
\begin{align*}
	&\sum_{j,i \in\G(b)} \left\| H_{j,i,n} - \Cov( P_{\lambda_{j,b}}^{1,b}, P_{\lambda_{i,b}}^{1,b} ) \right\|_{HS}\\
	\leq& \sum_{j,i \in\G(b)}  
	\left\| \frac{1}{N} \sum_{t=1}^N P_{\lambda_{j,b}}^{t,b}\tsr P_{\lambda_{i,b}}^{t,b} 
	- \E[ P_{\lambda_{j,b}}^{1,b} \tsr P_{\lambda_{i,b}}^{1,b} ] \right\|_{HS}\\
	&+ \left\| \widetilde{\F}_{\lambda_{j,b}} \tsr \widetilde{\F}_{\lambda_{i,b}} 
	\mp \widetilde{\F}_{\lambda_{j,b}} \tsr \E[ P_{\lambda_{i,b}}^{1,b} ] 
	- \E[ P_{\lambda_{j,b}}^{1,b} ] \tsr \E[ P_{\lambda_{i,b}}^{1,b} ]\right\|_{HS}\\
	\leq&  \sum_{j,i \in\G(b)} \left\| \frac{1}{N} \sum_{t=1}^N P_{\lambda_{j,b}}^{t,b}\tsr P_{\lambda_{i,b}}^{t,b} 
	- \E[ P_{\lambda_{j,b}}^{1,b} \tsr P_{\lambda_{i,b}}^{1,b} ] \right\|_{HS}\\
	&+ \sum_{j \in\G(b)} \left\| \widetilde{\F}_{\lambda_{j,b}} \right\|_{HS} \sum_{i \in\G(b)} \left\|\widetilde{\F}_{\lambda_{i,b}} - \E[ P_{\lambda_{i,b}}^{1,b} ] \right\|_{HS}\\
	&+ \sum_{i \in\G(b)} \left\| \E[ P_{\lambda_{i,b}}^{1,b} ] \right\|_{HS} \sum_{j \in\G(b)} \left\|\widetilde{\F}_{\lambda_{j,b}} - \E[ P_{\lambda_{j,b}}^{1,b} ]  \right\|_{HS}.
\end{align*}
Since by assumption $b^3/n \to 0$, it follows from Lemma \ref{Lem:PeriodogramMeanAndCovarianceApproxOverFF} that 
\begin{align*}
	\sum_{j,i \in\G(b)} \left\| \frac{1}{N} \sum_{t=1}^N P_{\lambda_{j,b}}^{t,b}\tsr P_{\lambda_{i,b}}^{t,b} 
	- \E[ P_{\lambda_{j,b}}^{1,b} \tsr P_{\lambda_{i,b}}^{1,b} ] \right\|_{HS} = o_P(b)
\end{align*}
as well as
\begin{align*}
	\sum_{j \in\G(b)} \left\|\wt{\F}_{\lambda_{j,b}}^b - \E[ P_{\lambda_{j,b}}^{1,b} ] \right\|_{HS} = o_P(1)
\end{align*}
and therefore 
\begin{align*}
	\sum_{j,i \in\G(b)} \left\| H_{i,j,n} - \Cov( P_{\lambda_{j,b}}^{1,b}, P_{\lambda_{i,b}}^{1,b} ) \right\|_{HS}
	&= o_P(b) + \mathcal{O}_P(b)o_P(1) + \mathcal{O}(b) o_P(1)\\
	&= o_P(b).
\end{align*}
The terms $R_1^*$, $R_2^*$, $R_3^*$ are bootstrap analogues of $R_1$, $R_2$, $R_3$ in \eqref{Eq:CovarianceRiemannApprox} and the above considerations 
yield 
\begin{align*}
	\| R_1^* - R_1 \|_{HS} \leq \frac{(2\pi)^2}{b} \sup_{\lambda \in [-\pi,\pi]} |W(\lambda)|^2 
	\sum_{j\in \G(b)} \| H_{j,i,n} - \Cov( P_{\lambda_{j,b}}^{1,b}, P_{\lambda_{i,b}}^{1,b} ) \|_{HS}
	= o_P(1)
\end{align*}
and similarly $\| R_2^* - R_2 \|_{HS} = o_P(1) = \|  R_3^* - R_3 \|_{HS}$. Invoking the limiting results for $R_1$, $R_2$ and $R_3$ from the proof
of Lemma \ref{Lem:AsymptoticCovarianceRiemannApprox} we obtain (with respect to $\|\cdot\|_{HS}$,
\begin{align*}
	R_1^* + R_2^* + R_3^* = R_1+R_2+R_3 + o_P(1) = \Psi + o_P(1).
\end{align*}
For the conditional relation  operator we have
\begin{align*}
	{\rm Rel}^*(L_n^+) &= \frac{(2\pi)^2 k}{b}  \sum_{j,i \in \G(b)} W(\lambda_{j,b}) W(\lambda_{i,b}) {\rm Rel}^*( P_{j,b}^*,  P_{i,b}^*)\\
	&= \frac{(2\pi)^2}{b} \sum_{j,i \in \G(b)}  W(\lambda_{j,b}) W(\lambda_{i,b}) {\rm Rel}^*(P_{\lambda_{j,b}}^{(1),b}, P_{\lambda_{i,b}}^{(1),b}  )
\end{align*}
and 
\begin{align*}
	{\rm Rel}^*(P_{\lambda_{j,b}}^{(1),b}, P_{\lambda_{i,b}}^{(1),b})
	= \frac{1}{N} \sum_{t=1}^N P_{\lambda_{j,b}}^{t,b} \tsr  P_{-\lambda_{i,b}}^{t,b} - \widetilde{\F}_{\lambda_{j,b}} \tsr \widetilde{\F}_{-\lambda_{i,b}}.
\end{align*}
Since $-\lambda_{j,b} = \lambda_{-j,b}$ all calculations can be repeated analogously to also obtain ${\rm Rel}^*(L_n^+) = \Upsilon + o_P(1)$. 
\end{proof}


\begin{proof}[Proof of Lemma~\ref{le.m-Fixed-1}]
We use the notation $ \widehat{\widetilde{\mathcal F}}_\lambda^{(m)}=\widehat{\PP}_m \widetilde{\mathcal F}_\lambda \widehat{\PP}_m$ and $ \widetilde{\mathcal F}_\lambda^{(m)}=\PP_m \widetilde{\mathcal F}_\lambda \PP_m$. Recall that $ S_{2,m}^+ = {\rm Cov}^\star(V_{n,m}^+) - \Sigma(\Psi^+_{1,m}, Y_{1,m}^+)$. As in the proof of Lemma~\ref{le.App-3}(iii), we get that, in $HS$-norm, 
\begin{align*}
{\rm Cov}^\star(T_{n,m}^+) & = {\rm Cov}^\star(\widehat{\PP}_m L_n^+ \widehat{\PP}_m) \\
 &= {\rm Cov}^\star(\PP_m L_n^+ \PP_m) +o_{P}(1)\\
 & \stackrel{P}{\rightarrow} \Psi_{1,m} + \Psi_{2,m} =:\Psi_m.
\end{align*} 
Similarly, $ \|{\rm Rel}^\star(T_{n,m}^+) - \Upsilon_m\|_{HS} \stackrel{P}{\rightarrow} 0$, where $ \Upsilon_m:=\Upsilon_{1,m} + \Upsilon_{2,m}$. Therefore,
\begin{equation}
\label{eq.Le-m-Fixed1}
 \|{\rm Cov}^\star(V_{n,m}^+)  -\Sigma(\Psi_m,\Upsilon_m)\|_{HS} \stackrel{P}{\rightarrow} 0.\end{equation}   Since 
\begin{align*}
\Psi_{1,m}^+  = \frac{4\pi^2}{b}\sum_{j\in{\mathcal G}(b)}&  \Big\{ W(\lambda_{j,b})\overline{W}(\lambda_{j,b}) \widehat{\widetilde{\F}}^{(m)}_{\lambda_{j,b}}\otimes_{op}\widehat{\widetilde{\F}}^{(m)}_{\lambda_{j,b}} \\
& \ \ \ \ 
+  W(\lambda_{j,b})\overline{W}(-\lambda_{j,b}) \widehat{\widetilde{\F}}^{(m)}_{\lambda_{j,b}}\otimes^\top_{op}\widehat{\widetilde{\F}}^{(m)}_{-\lambda_{j,b}} \Big\} + o(1/b),
\end{align*}
we can split $ \| \Psi_{1,m}^+ -\Psi_{1,m}\|_{HS}$ using
\begin{align*}
\widehat{\widetilde{\F}}^{(m)}_{\lambda_{j,b}}\otimes_{op}\widehat{\widetilde{\F}}^{(m)}_{\lambda_{j,b}} & - \F^{(m)}_{\lambda_{j,b}}\otimes_{op} \F^{(m)}_{\lambda_{j,b}} =\widehat{\widetilde{\F}}^{(m)}_{\lambda_{j,b}}\otimes_{op} \big(\widehat{\widetilde{\F}}^{(m)}_{\lambda_{j,b}} - \F^{(m)}_{\lambda_{j,b}} \big) \\
& + 
\ \  \big(\widehat{\widetilde{\F}}^{(m)}_{\lambda_{j,b}}- \F^{(m)}_{\lambda_{j,b}}\big) \otimes_{op} \F^{(m)}_{\lambda_{j,b}},
\end{align*} 
 in four  very similar terms,  with a typical one   given by 
\[D_{1,n}^+ = \Big\|\frac{4\pi^2}{b}\sum_{j\in{\mathcal G}(b)} W(\lambda_{j,b})\overline{W}(\lambda_{j,b})  \widehat{\widetilde{\F}}^{(m)}_{\lambda_{j,b}}\otimes_{op}\big(\widehat{\widetilde{\F}}^{(m)}_{\lambda_{j,b}}  -\F^{(m)}_{\lambda_{j,b}} \big)\Big\|_{HS}.\]
Now, 
\begin{align*}
\sum_{j\in {\mathcal G}(b)}\|\widehat{\widetilde{\F}}_{\lambda_{j,b}}^{(m)} - \F_{\lambda_{j,b}}^{(m)}\|_{HS} &
 \leq \sum_{j\in {\mathcal G}(b)}\|\widehat{\widetilde{\F}}_{\lambda_{j,b}}^{(m)} -\widetilde{\F}_{\lambda_{j,b}}^{(m)}\|_{HS}+  \sum_{j\in {\mathcal G}(b)}\|\widetilde{\F}_{\lambda_{j,b}}^{(m)} - \F_{\lambda_{j,b}}^{(m)}\|_{HS} 
\end{align*}
and
\begin{align*}
 \sum_{j\in {\mathcal G}(b)}\|\widehat{\widetilde{\F}}_{\lambda_{j,b}}^{(m)}  -\widetilde{\F}_{\lambda_{j,b}}^{(m)}\|_{HS} & \leq   
 \sum_{j\in {\mathcal G}(b)}\|(\widehat{\PP}_m-\PP_m)\widetilde{\F}_{\lambda_{j,b}}^{(m)} \PP_m\|_{HS} \\
 & \ \ \ \ + \sum_{j\in {\mathcal G}(b)}\|\widehat{\PP}_m\widetilde{\F}_{\lambda_{j,b}}^{(m)} (\widehat{\PP}_m-\PP_m)\|_{HS} \\
 &\leq  2 \|\widehat{\PP}_m-\PP_m\|_{\mathcal L} \sum_{j\in {\mathcal G}(b)}\|\widetilde{\F}_{\lambda_{j,b}}^{(m)} \|_{HS} \\
 &= O_{P}\big(b\|\widehat{\PP}_m-\PP_m\|_{\mathcal L} \big).
\end{align*}
Furthermore, because $\| {\rm E}(\widetilde{\F}_{\lambda_{j,b}} ) - \F_{\lambda_{j,b}} \|_{HS}= O(1/b)$, where the $O(1/b)$ term holds true uniformly in $\lambda_{j,b}$,  we get  by Lemma~\ref{Lem:PeriodogramMeanAndCovarianceApproxOverFF} (i), that 
\begin{align*}
  \sum_{j\in {\mathcal G}(b)}\|\widetilde{\F}_{\lambda_{j,b}}^{(m)} - \F_{\lambda_{j,b}}^{(m)}\|_{HS}  & = \sum_{j\in {\mathcal G}(b)}\| \PP_m(\widetilde{\F}_{\lambda_{j,b}} - \F_{\lambda_{j,b}}) \PP_m\|_{HS} \\
  & \leq \|\PP_m\|^2_{\mathcal L}  \sum_{j\in {\mathcal G}(b)} \|\widetilde{\F}_{\lambda_{j,b}} - \F_{\lambda_{j,b}}\|_{HS}\\
  & \leq  \sum_{j\in {\mathcal G}(b)} \|\widetilde{\F}_{\lambda_{j,b}} - {\rm E}(\widetilde{\F}_{\lambda_{j,b}})\|_{HS}+  \sum_{j\in {\mathcal G}(b)} \|{\rm E}(\widetilde{\F}_{\lambda_{j,b}}) - \F_{\lambda_{j,b}}\|_{HS}\\
  & = O_{P}(\sqrt{b^3/n}) + O(1).
\end{align*}
 Hence,
\begin{align*}
D_{1,n}^+ & \leq \big(\sup_{\lambda\in[-\pi,\pi]} |W(\lambda)|\big)^2 \frac{4\pi^2}{b}\sum_{j\in{\mathcal G}(b)} \|\widetilde{\F}_{\lambda_{j,b}}^{(m)} \|_{HS} 
\| \widetilde{\F}^{(m)}_{\lambda_{j,b}} - \F_{\lambda_{j,b}}^{(m)}\|_{HS}\\
&  \leq \big(\sup_{\lambda\in[-\pi,\pi]} |W(\lambda)|\big)^2 \max_{\lambda_{j,b} \in {\mathcal G}(b) }   \|\widetilde{\F}_{\lambda_{j,b}}^{(m)} \|_{HS}   \frac{4\pi^2}{b}\sum_{j\in{\mathcal G}(b)} 
\| \widetilde{\F}^{(m)}_{\lambda_{j,b}} - \F_{\lambda_{j,b}}^{(m)}\|_{HS}\\
& = O_{P}(\|\widehat{\PP}_m-\PP_m\|_{\mathcal L}) + O_{P}(\sqrt{b/n} + b^{-1})\\
 & = O_{P}\Big(\frac{1}{\sqrt{n}} \sum_{j=1}^m \frac{1}{\alpha^2_j} \Big) + O_{P}(\sqrt{b/n} + b^{-1}) \rightarrow 0,
\end{align*}
as $ n\rightarrow \infty$.  Therefore,
$ \| \Psi_{1,m}^+ -\Psi_{1,m}\|_{HS}  \stackrel{P}{\rightarrow} 0 $ and we can  show along the same lines that also, $ \| \Upsilon_{1,m}^+ -\Upsilon_{1,m}\|_{HS}  \stackrel{P}{\rightarrow} 0 $.
From this we get  $\| \Sigma(\Psi_{1,m}^+ ,  \Upsilon_{1,m}^+ ) - \Sigma(\Psi_{1,m}, \Upsilon_{1,m})\|_{HS} \stackrel{P}{\rightarrow} 0$, which  together with (\ref{eq.Le-m-Fixed1}) 
and  
\begin{align*}
S_{2,m}^+ = & \Sigma(\Psi_m,\Upsilon_m) - \Sigma(\Psi_{1,m},\Upsilon_{1,m}) + \big( \Sigma(\Psi_{1,m},\Upsilon_{1,m}) - \Sigma(\Psi_{1,m}^+,\Upsilon_{1,m}) \big) \\
& \ \ \ \  + \big({\rm Cov}^\star (V_{n,m}^+) -\Sigma(\Psi_m,\Upsilon_m\big),
\end{align*}
implies  that $ \|S_{2,m}^+ -\Sigma(\Psi_{2,m},\Upsilon_{2,m})\|_{HS} \stackrel{P}{\rightarrow} 0$. 
\hfil $\Box$
\end{proof}


\begin{proof}[Proof of Lemma~\ref{le.m-Fixed-2}]
Consider (i). We only show the convergence of the covariance operator $ {\rm Cov}^\star (L_n^o)$ since the proof for $ {\rm Rel}^\star (L_n^o)$  follows the same lines. 
Recall that $m$ is fixed and note that  $ \|(S_{1,m}^\star + S_{2,m}^+) -\Sigma(\Psi_m,\Upsilon_m)\|_{HS} \stackrel{P}{\rightarrow} 0$,
where $\Sigma(\Psi_m,\Upsilon_m)=\lim_{n\rightarrow \infty}  {\rm Cov}(V_{n,m}) $ in HS-norm and $\Psi_m=\Psi_{1,m}+\Psi_{2,m}$, $\Upsilon_m=\Upsilon_{1,m}+\Upsilon_{2,m}$; see the proof of (\ref{eq.Sigma-1n}) 
 below.  Since $\Sigma(\Psi_m,\Upsilon_m)$ is  a finite rank, nonnegative-definite operator, $S_{1,m}^\star + S_{2,m}^+$ is for $n$ large enough, with high 
probability, also nonnegative-definite,  i.e., for all $ \epsilon>0$, there exists $ n_0\in \N$ such that  for all $ n> n_0$, 
\[  P\Big(\big \langle (S_{1,m}^\star + S_{2,m}^+)(x),x \big\rangle_{HS} \geq 0 \ \mbox{for all} \  x \in {\mathcal H} \oplus\mathcal{H}, \Big) >1-\epsilon.\]
This implies that  $ (S_{1,m}^\star + S_{2,m}^+)$   possess  for $n$ large enough,  with high probability (in the sense specified above),   a  unique square root operator  denoted by $(S_{1,m}^\star + S_{2,m}^+)^{1/2}$.
Furthermore, $ S_{1,m}^\star$ is the covariance operator of $ V_{n,m}^\star$ and 
\begin{equation} \label{eq.S1m}
 \|S_{1,m}^\star -\Sigma(\Psi_{1,m},\Upsilon_{1,m})\|_{HS} \stackrel{P}{\rightarrow} 0. 
 \end{equation}
 Also, $\Sigma(\Psi_{1,m},\Upsilon_{1,m})  $ possesses  the inverse square root operator $  \Sigma(\Psi_{1,m},\Upsilon_{1,m}) ^{-1/2}$; see Lemma~\ref{le.inverse}. By the same lemma and because of (\ref{eq.S1m}),
 for $n$ large enough,  with high probability,  $( S_{1,m}^\star)^{-1/2}  $ exists and is bounded.  Let
\begin{equation} \label{eq.LoTilde}
 \widetilde{L}_n^o =   \big( S_{1,m}^\star + S_{2,m}^+\big)^{1/2}\big(S_{1,m}^\star\big)^{-1/2}V^\star_{n,m} + W^\star_{n,m},
 \end{equation}
and recall that $  V_{n,m}^\star = ({\rm  Re}\{T_{n,m}^\star\}, {\rm Im}\{T_{n,m}^\star\})^\top$ and  $  W_{n,m}^\star = ({\rm  Re}\{Q_{n,m}^\star\}, {\rm Im}\{Q_{n,m}^\star\})^\top$.   
We then get 
\begin{align*}
{\rm Cov}^\star(\widetilde{L}_n^o) &= (S_{1,m}^\star+S_{2,m}^+)^{1/2} (S_{1,m}^\star)^{-1/2} {\rm Cov}(V_{n,m}^\star,V_{n,m}^\star)  (S_{1,m}^\star)^{-1/2}  (S_{1,m}^\star+S_{2,m}^+)^{1/2}\\
 &  \ \ \ \ +(S_{1,m}^\star+S_{2,m}^+)^{1/2} (S_{1,m}^\star)^{-1/2} {\rm Cov}(V_{n,m}^\star,W_{n,m}^\star)  \\
 &  \ \ \ \  +  {\rm Cov}(W_{n,m}^\star,V_{n,m}^\star)  (S_{1,m}^\star)^{-1/2}  (S_{1,m}^\star+S_{2,m}^+)^{1/2}\\
 & \ \ \ \ +   {\rm Cov}(W_{n,m}^\star)\\
 & = \sum_{j=1}^4\Sigma_{j,n}^\star, 
\end{align*}
with an obvious notation for $ \Sigma_{j,n}^\star$.

Consider $ \Sigma^\star_{1,n}$. Note that  $ \Sigma_{1,n}^\star = (S_{1,m}^\star + S_{2,m}^+)$ because  
$ {\rm Cov}^\star (V_{n,m}^\star) = S_{1,m}^\star$. Furthermore, since  $ T_{n,m}^\star = \widehat{\PP}_m L_n^\ast \widehat{\PP}_m$, we get  in view of Lemma~\ref{le.App-CLT-1}(i)  and Lemma~\ref{le.App-3}, that
$\| {\rm Cov}(T_{n,m}^\star)  -\Psi_{1,m}\|_{HS} \stackrel{P}{\rightarrow} 0$ and $ \|{\rm Rel}(T_{n,m}^\star) - \Upsilon_{1,m}\|_{HS} \stackrel{P}{\rightarrow} 0$, or equivalently,  that
$ \|{\rm Cov}(V_{n,m}^\star) - \Sigma(\Psi_{1,m},\Upsilon_{1,m})\|_{HS} \stackrel{P}{\rightarrow} 0$.  From Lemma~\ref{le.m-Fixed-1} we have 
$ \|S_{2,m}^+ -\Sigma(\Psi_{2,m},\Upsilon_{2,m})\|_{HS} \stackrel{P}{\rightarrow} 0$. Therefore, 
\begin{align} \label{eq.Sigma-1n}
\|\Sigma_{1,n}^\star  - \Sigma(\Psi_{1,m} + \Psi_{2,m}, \Upsilon_{1,m} + \Upsilon_{2,m}  \|_{HS} & \stackrel{P}{\rightarrow} 0.
\end{align}
 
Consider $\Sigma_{2,n}^\star$.  Since  $ {\rm Cov}^\star(T_{n,m}^\star,Q_{n,m}^\star) = {\rm Cov}^{\star}(\widehat{\PP}_m L_n^\star \widehat{\PP}_m, L_n^\star -\widehat{\PP}_m L_n^\star \widehat{\PP}_m)$,
we get in view of   Lemma~\ref{le.App-CLT-1}(i)  and Lemma~\ref{le.App-3} again, that
\[ \| {\rm Cov}^\star (T^\star_{n,m}, Q^\star_{n,m})  - \Psi_{1,m}^{(F^{(m)},G^{(m)})} \|_{HS} \stackrel{P}{\rightarrow} 0,\]
and
 \[ \|{\rm Rel}^\star (T^\star_{n,m}, Q^\star_{n,m})  - \Upsilon_{1,m}^{(F^{(m)},G^{(m)})} \|_{HS} \stackrel{P}{\rightarrow} 0,\]
that is,  
\[  \| {\rm Cov}^\star(V_{n,m}^\star, W_{n,m}^\star) - \Sigma( \Psi_{1,m}^{(F^{(m)},G^{(m)})},  \Upsilon_{1,m}^{(F^{(m)},G^{(m)})} )\|_{HS} \stackrel{P}{\rightarrow} 0.\]
By Lemma~\ref{le.inverse}  and  because $ \|(S_{1,m}^\star+S_{2,m}^+)  - \Sigma(\Psi_{1,m}+\Psi_{2,m}, \Upsilon_{1,m} + \Upsilon_{2,m}) \|_{HS} \stackrel{P}{\rightarrow} 0$ and
$ \|S_{1,m}^\star - \Sigma(\Psi_{1,m}, \Upsilon_{1,m})\|_{HS} \stackrel{P}{\rightarrow} 0$, we get  $ \big\| \Sigma_{2,n}^\star  - \Sigma_{2,m}\|_{HS} \stackrel{P}{\rightarrow} 0$,
where 
\[ \Sigma_{2,m} = \Sigma(\Psi_m,\Upsilon_m)^{1/2} \Sigma(\Psi_{1,m}, \Upsilon_{1,m})^{-1/2}  \Sigma( \Psi_{1,m}^{(F^{(m)},G^{(m)})},  \Upsilon_{1,m}^{(F^{(m)},G^{(m)})} )\]
and $ \Psi_m=\Psi_{1,m} + \Psi_{2,m}$, $\Upsilon_{m} = \Upsilon_{1,m} + \Upsilon_{2,m}$.

Along the same lines it follows that $ \| \Sigma_{3,n}^\star  - \Sigma_{3,m}\|_{HS} \stackrel{P}{\rightarrow} 0$,
where 
\[ \Sigma_{3,m} =   \Sigma( \Psi_{1,m}^{(G^{(m)},F^{(m)})},  \Upsilon_{1,m}^{(G^{(m)},F^{(m)})} ) \Sigma(\Psi_{1,m}, \Upsilon_{1,m})^{-1/2}\Sigma(\Psi_m,\Upsilon_m)^{1/2}.\]

Finally, for $\Sigma_{4,n}^\star$ we have  by an analogue  reasoning, that 
\[ \|\Sigma_{4,n}^\star- \Sigma(  \Psi_{1,m}^{(G^{(m)},G^{(m)})}, \Upsilon_{1,m}^{(G^{(m)},G^{(m)})})\|_{HS} \stackrel{P}{\rightarrow} 0.\]
Putting the  derived limiting expressions for $ \Sigma_{j,n}^\star$, $j=1,\ldots,4$, together, we get after some simple algebra that 
\[ \|{\rm Cov}^\star(\widetilde{L}^o_n)-\Sigma(\Psi_1 + \Psi_{2,m} + \Delta_{\Psi,m}, \Upsilon_1+\Upsilon_{2,m} +\Delta_{\Upsilon,m} ) \|_{HS}   \stackrel{P}{\rightarrow}  0.\]
 
Consider (ii). Recall that $ L_n^o=(1, {\rm i} ) \widetilde{L}_n^o$, where $ \widetilde{L}_n^o$ is given in (\ref{eq.LoTilde}).  Furthermore, 
$ T^\star_{n,m} =\widehat{\PP}_m L_n^\star \widehat{\PP}_m=\PP_m L_n^\star \PP_m + o_{P}(1)$ and $ Q^\star_{n,m} =L_n^\star - \widehat{\PP}_m L_n^\star \widehat{\PP}_m=L_n^\star - \PP_m L_n^\star \PP_m + o_{P}(1)$,
 by Lemma~\ref{le.App-3} and Assumption~\ref{as.4}. 
Furthermore, by Lemma~\ref{le.App-CLT-1} and the mapping theorem (see \cite{Billingsley1999},
 Theorem 2.7), we get by the  continuity of the projection operator,   that   
 \[ \left( \begin{array}{c}  \PP_m L_n^\star \PP_m \\ L_n^\star - \PP_m L_n^\star \PP_m \end{array}\right)  \stackrel{\mathcal D}{\longrightarrow} {\mathcal C}{\mathcal N} (0,\Psi_{D},\Upsilon_{D}),\]
 where
 \begin{align*}
\Psi_{D} & = \left( \begin{array}{cc}  \Psi_{1,m} & \Psi_{1,m}^{(F^{(m)}, G^{(m)})} \\ \Psi_{1,m}^{(G^{(m)}, F^{(m)})} & \Psi_{1,m}^{(G^{(m)}, G^{(m)})} \end{array}\right)
 \end{align*}
 and
  \begin{align*}
\Upsilon_{D} & =   \left( \begin{array}{cc}  \Upsilon_{1,m} & \Upsilon_{1,m}^{(F^{(m)}, G^{(m)})} \\ \Upsilon_{1,m}^{(G^{(m)}, F^{(m)})} & \Upsilon_{1,m}^{(G^{(m)}, G^{(m)})} \end{array}\right).
 \end{align*}
That is, as $n\rightarrow \infty$, we have in $HS$-norm, 
\begin{equation} \label{eq.Dist-VW}
\left( \begin{array}{c}  V_{n,m}^\star \\ W_{n,m}^\star \end{array}\right)  \stackrel{\mathcal D}{\longrightarrow} {\mathcal N} (0, \Sigma(\Psi_{D},\Upsilon_{D})).
\end{equation}
 Now,
 \begin{align*}
\widetilde{L}_n^o & =    \Sigma(\Psi_m,\Upsilon_m)^{1/2} \Sigma(\Psi_{1,m},\Upsilon_{1,m})^{-1/2}V^\star_{n,m} + W^\star_{n,m} \\
 &   \ \ \ \ \ + \big(\big( S_{1,m}^\star + S_{2,m}^+\big)^{1/2}\big(S_{1,m}^\star\big)^{-1/2} -    \Sigma(\Psi_m,\Upsilon_m)^{1/2} \Sigma(\Psi_{1,m},\Upsilon_{1,m})^{-1/2}\big) V^\star_{n,m}\\
 & = \widetilde{L}_{1,n}^o +  \widetilde{L}_{2,n}^o, 
 \end{align*}
 with an obvious notation for $ \widetilde{L}_{1,n}^o$ and $ \widetilde{L}_{2,n}^o$. 
 By  (\ref{eq.Dist-VW}) and the mapping theorem again, we get 
 \begin{align*}
   \widetilde{L}_{1,n}^o=  \Sigma(\Psi_m,\Upsilon_m)^{1/2} & \Sigma(\Psi_{1,m},\Upsilon_{1,m})^{-1/2}V^\star_{n,m} + W^\star_{n,m} \\
   & \ \  \stackrel{\mathcal D}{\longrightarrow} {\mathcal N} ( 0, 
 \Psi_1 + \Psi_{2,m} + \Delta_{\Psi_1,m}, \Upsilon_1+\Upsilon_{2,m} +\Delta_{\Upsilon_1,m}).
 \end{align*}
 By (\ref{eq.Sigma-1n}),  (\ref{eq.S1m}) and  Lemma~\ref{le.inverse}, it  follows that  
 \begin{align*} 
  \|\big( S_{1,m}^\star + S_{2,m}^+\big)^{1/2}\big(S_{1,m}^\star\big)^{-1/2} -    \Sigma(\Psi_m,\Upsilon_m)^{1/2} \Sigma(\Psi_{1,m},\Upsilon_{1,m})^{-1/2}\|_{HS}  \stackrel{P}{\rightarrow} 0,
  \end{align*}
  which together with the fact that $ \|V^\star_{n,m}\|_{HS} = {\mathcal O}_P(1)$,  implies $ \widetilde{L}_{1,n}^o=o_P(1) $ and concludes the proof of  assertion (ii). 
\end{proof}


\begin{proof}[Proof of Theorem~\ref{th.main}] 
Consider (i). It suffices to show that under the assumptions of the theorem, the following assertions hold true:
\begin{enumerate}
\item[(a1)] \ $ \|{\rm Cov}^\star(T_{n,m}^+) -\Psi\|_{HS} \stackrel{P}{\rightarrow} 0$ \ and \  $ \|{\rm Rel}^\star(T_{n,m}^+) -\Upsilon\|_{HS} \stackrel{P}{\rightarrow} 0$ 
\item[(a2)] \ $\|\Psi_{1,m}^+ -\Psi_1\|_{HS} \stackrel{P}{\rightarrow} 0 $  \ and \  $\|\Upsilon_{1,m}^+ -\Upsilon_1\|_{HS} \stackrel{P}{\rightarrow} 0 $.
\end{enumerate}
We prove the assertions regarding the covariance operators only since the arguments used for the relation operators are  similar.

Consider  (a1) and recall that $T_{n,m}^+ = \widehat{\PP}_m L_n^+ \widehat{\PP}_m$.  We have
\begin{align} \label{eq.Bound-1}
\|{\rm Cov}^\star(T_{n,m}^+) -\Psi\|_{HS} & \leq \| \widehat{\PP}_m{\rm Cov}^\star( L_{n}^+) \widehat{\PP}_m - \PP_m {\rm Cov}^\star(L_{n}^+) \PP_m \|_{HS} \nonumber \\
& \ \ \ \ +  \|  \PP_m{\rm Cov}^\star( L_{n}^+) \PP_m -\PP_m\Psi\PP_m\|_{HS} \nonumber  \\
& \ \ \ \ +  \| \PP_m\Psi \PP_m-\Psi\|_{HS}.
\end{align}
The first term of the above inequality is bounded by 
\[ \|(\widehat{\PP}_m-\PP_m) {\rm Cov}^\star (L_n^+)\widehat{\PP}_m\|_{HS} +  \|\PP_m {\rm Cov}^\star(L_n^+)(\PP_m- \widehat{\PP}_m)\|_{HS}.\]
We deal with one of the two  terms only.  In view of   Lemma~\ref{Lem:CBPConsistencyCovariance}, we obtain  for  the first  term  the bound   
\begin{align*}
\|\widehat{\PP}_m -\PP_m\|_{\mathcal L} \|{\rm Cov}^\star(L_n^+)\|_{HS} \|\widehat{\PP}_m\|_{\mathcal L} & = {\mathcal O}_P( \|\widehat{\PP}_m-\PP_m\|_{\mathcal L}) \\
& = {\mathcal O}_{P}\Big(n^{-1/2}\sqrt{\sum_{j=1}^m\alpha_j^{-2}}\Big) \rightarrow 0.
\end{align*}
Furthermore, by Lemma~\ref{Lem:CBPConsistencyCovariance},
\begin{align*}
  \|  \PP_m{\rm Cov}^\star( L_{n}^+) \PP_m -\PP_m\Psi\PP_m\|_{HS} & =   \|  \PP_m( {\rm Cov}^\star( L_{n}^+) -\Psi)\PP_m\|_{HS}\\
  & \leq \|{\rm Cov}^\star( L_{n}^+) -\Psi\|_{HS} \stackrel{P}{\rightarrow} 0,
\end{align*}
 Finally  $\|\PP_m\Psi\PP_m - \Psi\|_{HS} \rightarrow 0$ as $ m\rightarrow \infty$,  \cite{GohbergEtAl1990}, Theorem VI. 4.3, which proves  (\ref{eq.Bound-1}).

Consider (a2). We only show $ \|\Psi_{1,m}^+ -\Psi_1\|_{HS} \stackrel{P}{\rightarrow} 0$.  We have
\[  \|\Psi_{1,m}^+ -\Psi_1\|_{HS}  \leq \|\Psi_{1,m}^+ -\Psi_{1,m}\|_{HS} +  \|\Psi_{1,m} -\Psi_1\|_{HS}.\]
The second term  converges to zero  as $ m\rightarrow \infty$, while for the first term we have as in the proof of Lemma~\ref{le.m-Fixed-1},  that 
\begin{align*}
 \|\Psi_{1,m}^+ -\Psi_{1,m}\|_{HS}  & = {\mathcal O}_P\Big(n^{-1/2} \sqrt{\sum_{j=1}^m 1/\alpha_j^2} \Big) + {\mathcal O}_P\big(\sqrt{b/n}  + 1/b\big)  \rightarrow 0,
 \end{align*}
 by Assumption~\ref{as.3} and  Assumption~\ref{as.4}. 

Consider (ii). We only prove the assertion  for the covariance operator. It suffices to show  that $ \|{\rm Cov}^\ast(V_{n,m}^o) - \Sigma(\Psi,\Upsilon)\|_{HS} \stackrel{P}{\rightarrow} 0$. Note that 
$ {\rm Cov}^\star(V_{n,m}^o) = (S_{1,m}^\star + S_{2,m}^+)$.  Furthermore,  
\begin{align*}
\|(S_{1,m}^\star + S_{2,m}^+) - \Sigma(\Psi,\Upsilon)\|_{HS} & \leq \|S_{1,m}^\star -\Sigma(\Psi_1,\Upsilon_1) \|_{HS} +   \|S_{2,m}^+ -\Sigma(\Psi_2,\Upsilon_2) \|_{HS}\\
& =  \|S_{1,m}^\star -\Sigma(\Psi_1,\Upsilon_1) \|_{HS} +o_P(1),
\end{align*}
by part (i) of the theorem. Also, $\|S_{1,m}^\star -\Sigma(\Psi_1,\Upsilon_1) \|_{HS} \stackrel{P}{\rightarrow} 0$ since 
\[ \| {\rm Cov}^\star(\widehat{\PP}_m L_n^\star \widehat{\PP}_m)-\Psi_1|\|_{HS} \stackrel{P}{\rightarrow }0 \ \ \mbox{and} \ \  \|{\rm Rel}^\star(\widehat{\PP}_m L_n^\star \widehat{\PP}_m)-\Upsilon_1 \|_{HS} \stackrel{P}{\rightarrow} 0.\]
Both assertions above follow by Lemma~\ref{le.App-CLT-1}  and the same arguments  as in the proof that  (\ref{eq.Bound-1}) convergence to zero in probability. 


Consider (iii). 
We use  Theorem 3.2 of \cite{Billingsley1999}. For this it suffices to show that 
\begin{enumerate}
\item[(a)] $T^o_{n,m} \stackrel{\mathcal D}{\rightarrow}  Z_m :={\mathcal C}{\mathcal N}(0, \Psi_{1,m}+\Psi_{2,m}, \Upsilon_{1,m}+\Upsilon_{2,m})$ as $ n\rightarrow \infty$ for every $ m \in \N$. \\
\item[(b)] $Z_m\stackrel{\mathcal D}{\rightarrow} {\mathcal C}{\mathcal N}(0, \Psi_1+\Psi_2, \Upsilon_1 + \Upsilon_2)$ as $ m \rightarrow \infty$.\\
\item[(c)]  $ \lim_{m\rightarrow \infty} \limsup_{n\rightarrow \infty} P(\|L_n^o-T_{n,m}^o\|_{HS} > \epsilon ) =0$ for every $ \epsilon >0$. 
\end{enumerate} 

Consider (a).  Fix any $m\in \N$. Let $ V_{n,m}^o=(S_{1,m}^\star + S_{2,m}^+)^{1/2} (S_{1,m}^\star)^{-1/2} V^\star_{n,m}$. Note that $ {\rm Cov}(V_{n,m}^o) =S_{1,m}^\star + S_{2,m}^+$. By Lemma~\ref{le.m-Fixed-1},  $\| S_{2,m}^+- \Sigma(\Psi_{2,m},\Upsilon_{2,m})\|_{HS} \stackrel{P}{\rightarrow} 0$. Furthermore,  
$\|S_{1,m}^\star- \Sigma(\Psi_{1,m}, \Upsilon_{1,m})\|_{HS} \stackrel{P}{\rightarrow} 0$. Hence, 
\[  \|(S_{1,m}^\star+S_{2,m}^+)  - \Sigma(\Psi_{1,m} + \Psi_{2,m}, \Upsilon_{1,m} + \Upsilon_{2,m})\|_{HS} \stackrel{P}{\rightarrow} 0.\]
By Lemma~\ref{le.App-3}, $ V_{n,m}^\star =\PP_m L_n^\star \PP_m +o_P(1)$. From this and  Lemma~\ref{le.App-CLT-1} we get by the projection theorem,  that   
\[T^o_{n,m} = (1,  {\rm i}) V_{n,m}^o\stackrel{\mathcal D}{\rightarrow}  {\mathcal C}{\mathcal N}(0, \Psi_{1,m}+\Psi_{2,m}, \Upsilon_{1,m}+\Upsilon_{2,m}),\]
 as $ n\rightarrow \infty$.
 
 Consider (b). By Lemma~\ref{le.App-2} it suffices to show that, as $m\rightarrow \infty$, 
 \begin{equation} \label{eq.b-1} 
 \|\Psi_m-\Psi\|_N \rightarrow 0
 \ \ \mbox{and} \ \  \|\Upsilon_m-\Upsilon\|_N \rightarrow 0.
 \end{equation}
 We show the first assertion only.   
  Note that  $ {\mathcal F}^{(m)}_\lambda = \PP_m {\mathcal F}_\lambda \PP_m = \PP_m\otimes_{op}\PP_m ({\mathcal F}_\lambda)$ and  similarly,   
  $ {\mathcal F}^{({\mathcal X}_m)}_{\lambda_1,\lambda_2, \lambda_3} = (\PP_m\otimes_{op}\PP_m) {\mathcal F}_{\lambda_1,\lambda_2,\lambda_3}  (\PP_m\otimes_{op}\PP_m)$. Hence 
 \begin{align*}
 \Psi_{m}  =&  2\pi\Big\{ \int_{-\pi}^{\pi} W(\lambda)\overline{W}(\lambda)\big[ \PP_m\otimes_{op}\PP_m({\mathcal F}_\lambda) \big]\otimes_{op}\big[ \PP_m\otimes_{op}\PP_m({\mathcal F}_\lambda) \big] d\lambda\\
 & \ \ \ \ + \int_{-\pi}^{\pi} W(\lambda)\overline{W}(-\lambda)\big[ \PP_m\otimes_{op}\PP_m({\mathcal F}_\lambda) \big]\otimes^{\top}_{op}\big[ \PP_m\otimes_{op}\PP_m({\mathcal F}_{-\lambda}) \big] d\lambda\\
 & \ \ \ \  + \int_{-\pi}^{\pi} \int_{-\pi}^\pi W(\lambda)_1\overline{W}(\lambda_2)\big( \PP_m\otimes_{op}\PP_m\big) {\mathcal F}_{\lambda_1,-\lambda_1, -\lambda_2}\big(\PP_m\otimes_{op}\PP_m\big) d\lambda_1d\lambda_2 \Big\}\\
 =& \big( \PP_m\otimes_{op}\PP_m\big)\Psi \big( \PP_m\otimes_{op}\PP_m\big), 
 \end{align*}
 where the last equality follows using Lemma~\ref{le.OperAlg}. Because for an operator $ A\in HS(\HH)$ it holds  $ \| \PP_m\otimes_{op}\PP_m(A) -A\|_{HS}^2=\sum_{r,s>m}|\langle A, v_r\otimes v_s\rangle_{HS}\big|^2 \rightarrow 0$ as $ m\rightarrow \infty$, it follows from Theorem VI. 4.3 of \cite{GohbergEtAl1990} that 
 \[ \big\|\big( \PP_m\otimes_{op}\PP_m\big)\Psi \big( \PP_m\otimes_{op}\PP_m\big) -\Psi\big\|_{N} \rightarrow 0,\]
 as $ m \rightarrow \infty$.

 Consider (c).  By Markov's  inequality, 
 \begin{align*}
 \lim_{m\rightarrow \infty}\limsup_{n\rightarrow \infty}P\big( \| L_n^o-T_{n,m}^o\|_{HS} >\epsilon \big) \leq \frac{1}{\epsilon^2} 
  \lim_{m\rightarrow \infty}\limsup_{n\rightarrow \infty} \E^\star \|Q^\star_{n,m}\|_{HS}^2
 \end{align*}
 and
 \begin{align*}
 \E^\star \|Q^\star_{n,m}\|_{HS}^2 & = \E^\ast\|  \widehat{\PP}_m L_n^\star ({\rm Id}-\widehat{\PP}_m)  +   ({\rm Id}-\widehat{\PP}_m) L_n^\star \widehat{\PP}_m +  ({\rm Id}- \widehat{\PP}_m) L_n^\star ({\rm Id}-\widehat{\PP}_m)\|_{HS}^2\\
 & \leq {\mathcal O}(1) \|{\rm Id}-\widehat{\PP}_m\|^2_{\mathcal L} \E^\star \|L_n^\star\|_{HS}^2 \\
 &\leq {\mathcal O}(1)\big(\| {\rm Id}-\PP_m\|^2_{\mathcal L} + \|\widehat{\PP}_m -\PP_m\|^2_{\mathcal L}\big)\E^\star \|L_n^\star\|_{HS}^2\\
 & \leq  {\mathcal O}(1)\big(\| {\rm Id}-\PP_m\|^2_{\mathcal L} + n^{-1/2}\sum_{j=1}^m \alpha_j^{-2} \big)\E^\star \|L_n^\star\|_{HS}^2
 \end{align*}
 Note that 
 \[ \E^\star\|L_n^\star\|_{HS}^2   = \|{\rm Cov}^\star(L_n^\star) \|_N  \leq  \|\Psi_1\|_N + \|{\rm Cov}^\star(L_n^\star)  - \Psi_1\|_N \rightarrow \|\Psi_1\|_N \]
in probability,   by Lemma~\ref{le.App-CLT-1} and  therefore,  by Assumption~\ref{as.4}, 
\[
\limsup_{n\rightarrow \infty}  \Big(\| {\rm Id}-\PP_m\|^2_{\mathcal L} + n^{-1/2}\sqrt{\sum_{j=1}^m1/ \alpha_j^{2} }\Big)\E^\star \|L_n^\star\|_{HS}^2 = {\mathcal O}_P(1) \|{\rm Id}-\PP_m\|_{\mathcal L}^2,
\]
which converges to zero as $ m \rightarrow\infty$.
\end{proof}

\begin{thebibliography}{50}
\bibitem{AueKlepsch2017}  
\textsc{Aue, A.} and \textsc{Klepsch, J.} (2017). 
{Estimating functional time series by moving average model fitting.} 
\textit{arXiv:1701.00770}.

\bibitem{Bogachev1998}
\textsc{Bogachev, V.} (1998).
\textit{Gaussian Measures.}
American Mathematical Society, Providence.

\bibitem{Billingsley1999}  
\textsc{Billingsley, P.} (1999).
\textit{Convergence of Probability Measures}, 2nd ed. 
Wiley, New York.

\bibitem{CerovekiHoermann2015}  
\textsc{Cerovecki, C.} and \textsc{H\"ormann, S.} (2015). 
On the CLT for discrete Fourier transforms of functional time series.  
\textit{Journal of Multivariate Analysis} \textbf{154} 282--295. 

\bibitem{DahlhausJanas1996} 
\textsc{Dahlhaus, R.} and \textsc{Janas, D.}  (1996).
A frequency domain bootstrap for ratio statistics in time series analysis.
\textit{The Annals of  Statistics} \textbf{24} 1934--1963.

\bibitem{Dehling2015} 
\textsc{Dehling, H.}, \textsc{Shapirov, O. S.} and \textsc{Wendler, M.} (2015). 
Bootstrap for dependent Hilbert space valued random variables with application to von-Mises statistics. 
\textit{Journal of Multivariate Analysis}  \textbf{133}  200--215. 

\bibitem{FrankeHardle1992} 
\textsc{Franke, J.} and \textsc{H\"ardle, W.} (1992).
On bootstrapping kernel spectral estimates.
\textit{The Annals of  Statistics} \textbf{20} 121--145.

\bibitem{FrankeNuar2016} 
\textsc{Franke, J.} and  \textsc{Nyarigue, E.} (2016). 
Residual-based bootstrap for functional autoregressions. 
\textit{Preprint}.

\bibitem{GohbergEtAl1990}
\textsc{Gohberg, I.},  \textsc{Goldberg, S.} and  \textsc{Kaashoek, M. A.} 
\textit{Classes of Linear Operators Vol. 1},
Springer, Basel,
1990.



\bibitem{HorvathKokoszka2012} 
\textsc{Horv\`ath, L.} and \textsc{Kokoszka, P.}  (2012). 
\textit{Inference for Functional Data with Applications.}  
Springer, New York.

\bibitem{HoermannKokoszka2010} 
\textsc{H\"ormann, S.} and \textsc{Kokoszka, P.}  (2010). 
Weakly dependent functional data. 
\textit{The Annals of Statistics} \textbf{38} 1845--1884.

\bibitem{HoermannKidzinski2015}  
\textsc{H\"ormann, S.}  and \textsc{Kidzi\'nski, L.} (2015). 
A note on estimation in Hilbertian linear models. 
\textit{Scandinavian Journal of Statistics} \textbf{42} 43--62. 

\bibitem{HsingEubank20152017} 
\textsc{Hsing, T.} and \textsc{Eubank, R.} (2015). 
\textit{Theoretical Foundations of Functional Data Analysis, with an Introduction to Linear Operators.}
Wiley, Chichester.   

\bibitem{HurvichZeger1987}  
\textsc{Hurvich, C. M.} and \textsc{Zeger, S. L.} (1987),
Frequency domain bootstrap methods for time series.
\textit{Technical Report \#87-14,  New York University.}

\bibitem{LeuchtEtAl2023} 
\textsc{Leucht, A.}, \textsc{Paparoditis, E.}, \textsc{Rademacher, D.} and  \textsc{Sapatinas, T.}  (2022). 
Testing equality of  spectral density operators for  functional processes. 
\textit{Journal of Multivariate Analysis} \textbf{189} 104889.

\bibitem{LiWangCarolli2013}  
\textsc{Li, Y.}, \textsc{Wang, N.}  and \textsc{Caroll, R. J.}  (2013). 
Selecting the number of principal components in functional data. 
\textit{Journal of the American Statistical Association} \textbf{108} 1284--1294.

\bibitem{MeyerEtAl2020} 
\textsc{Meyer,  M.},  \textsc{Kreiss, J.-P.} and \textsc{Paparoditis, E.} (2020). 
Extending the validity of frequency domain bootstrap methods to general stationary processes. 
\textit{The Annals of  Statistics} \textbf{48} 2404--2427.

\bibitem{MeyerPaparoditis2023}
\textsc{Meyer, M.} and \textsc{Paparoditis, E.} (2023). 
A frequency domain bootstrap for general multivariate stationary processes. 
\textit{Bernoulli} \textbf{29} 2367--2391.

\bibitem{PanaretosTavakoli2013} 
\textsc{Panaretos, V. and S. Tavakoli} (2013). 
Fourier analysis of stationary time series in function space. 
\textit{The Annals of Statistics} \textbf{41}  568-603.


\bibitem{Paparoditis2018} 
\textsc{Paparoditis, E.} (2018). 
Sieve bootstrap for functional time series. 
\textit{The Annals of Statistics} \textbf{46}  3510--3538.

\bibitem{PilavakisEtAl2019} 
\textsc{Pilavakis, D.},  \textsc{Paparoditis, E.} and \textsc{Sapatinas, T.} (2019). 
Moving block and  tapered block bootstrap for functional time series with an application to the K-sample mean problem. 
\textit{Bernoulli} \textbf{25} 3496--3526.

\bibitem{PilavakisEtAl2020} 
\textsc{Pilavakis, D.}, \textsc{Paparoditis, E.} and \textsc{Sapatinas, T.} (2020). 
Testing equality of covariance operators for functional time series. 
\textit{Journal of Time Series Analysis} \textbf{41} 571--589.

\bibitem{Politis1994}  
\textsc{Politis, D. N.} and \textsc{Romano, J.} (1994). 
Limit theorems for weakly dependent Hilbert space valued random variables with applications to the stationary bootstrap. 
\textit{Statistica Sinica} \textbf{4} 461--476.

\bibitem{RademacherEtAl2024}  
\textsc{Rademacher, D.}, \textsc{Kreiss, J.-P.} and \textsc{E. Paparoditis} (2024). 
Asymptotic normality  of spectral means of Hilbert space valued random processes. 
\textit{Stochastic Processes and their Applications} \textbf{173} 104357.





\bibitem{Shapirov2016} 
\textsc{Shapirov, O. S.}, \textsc{Tewes, J.} and \textsc{Wendler, M.}  (2016). 
Sequential block bootstrap in a Hilbert space with application to change point analysis. 
\textit{The Canadian Journal of Statistics} \textbf{44} 300--322. 

\bibitem{Simons2005} 
\textsc{Simons, B.}  (2005).  
\textit{Trace Ideals and their Applications.}
American Mathematical Society, Mathematical Surveys and Monographs, Vol. 120. 

\bibitem{YuEtAl2023}  
\textsc{Yu, H.}, \textsc{Kaiser, M. S.} and \textsc{Nordman, D. J.} (2023). 
A subsampling perspective for extending the validity of state-of-the-art bootstraps in the frequency domain. 
\textit{Biomatrika} \textbf{110}  1099-1115.


\bibitem{ZhuPolitis2017}  
\textsc{Zhu, T.} and \textsc{Politis, D. N.} (2017). 
Kernel estimates of first-order nonparametric functional autoregression model and its bootstrap approximation
\textit{Electronic Journal of Statistics} \textbf{11} 2876--2906.

\end{thebibliography}
\end{document}